\def\input@path{{figures/}}\makeatother
\DeclareFontFamily{LYG}{ygoth}{}
\DeclareFontShape{LYG}{ygoth}{m}{n}{<->ygoth}{}
\DeclareFontFamily{LY}{yfrak}{}
\DeclareFontShape{LY}{yfrak}{m}{n}{<->yfrak}{}
\DeclareFontFamily{LY}{ysmfrak}{}
\DeclareFontShape{LY}{ysmfrak}{m}{n}{<->ysmfrak}{}
\DeclareFontFamily{LY}{yswab}{}
\DeclareFontShape{LY}{yswab}{m}{n}{<->yswab}{}
\newtheorem{theorem}{Theorem}[section]
\newtheorem{corollary}[theorem]{Corollary}
\newtheorem{proposition}[theorem]{Proposition}
\newtheorem{lemma}[theorem]{Lemma}
\newtheorem{conjecture}[theorem]{Conjecture}
\newtheorem*{theorem*}{Theorem}
\theoremstyle{definition}
\newtheorem{definition}[theorem]{Definition}
\newtheorem{example}[theorem]{Example}
\newtheorem{remark}[theorem]{Remark}
\newtheorem{question}[theorem]{Problem}
\crefname{equation}{Equation}{Equations}
\newcommand{\R}{\mathbb{R}} 
\newcommand{\N}{\mathbb{N}} 
\renewcommand{\c}[1]{{\mathcal{#1}}} 
\renewcommand{\b}[1]{{\boldsymbol{#1}}} 
\newcommand{\scr}[1]{{\mathscr{#1}}} 
\newcommand{\go}[1]{{\textgoth{#1}}} 
\renewcommand{\epsilon}{\varepsilon} 
\newcommand{\ssm}{\smallsetminus} 
\newcommand{\one}{{1\!\!1}} 
\newcommandx{\ones}[1][1=n]{\one_{#1}} 
\DeclareMathOperator{\conv}{conv} 
\DeclareMathOperator{\st}{st} 
\newcommand{\ie}{\textit{i.e.}~} 
\newcommand{\eg}{\textit{e.g.}~} 
\definecolor{darkblue}{rgb}{0,0,0.7} 
\definecolor{green}{RGB}{57,181,74} 
\definecolor{violet}{RGB}{147,39,143} 
\newcommand{\darkblue}{\color{darkblue}} 
\newcommand{\defn}[1]{\textsl{\darkblue #1}} 
\def\part{\@startsection{part}{1}%
\z@{.7\linespacing\@plus\linespacing}{.8\linespacing}%
{\LARGE\sffamily\centering}}
\newcommand{\polytope}[1]{\mathsf{#1}}
\newcommand{\polytopeP}{\mathsf{P}}
\newcommand{\polytopeQ}{\mathsf{Q}}
\newcommand{\polytopeF}{\mathsf{F}}
\newcommand{\Cube}[1][n]{\square_{#1}}
\newcommandx{\PivotPolytope}[2][1=d,2=\t]{\polytope{\Pi}_{#2}^{#1}}
\newcommandx{\PP}{\polytope{\Pi}}
\newcommand{\permuto}[1][n]{\polytope{\Pi}_{#1}}
\newcommandx{\HypSimpl}[2][1=n,2=k]{\polytope{\Delta}(#1,#2)}
\newcommand{\HypSimplTwo}[1][n]{\HypSimpl[#1][2]}
\newcommand{\supp}[1][\b v]{\text{s}(#1)}
\newcommandx{\MPP}[2][1=\polytopeP,2=\b c]{\polytope{M}_{#2}(#1)}
\newcommandx{\MPPHypSimpl}[2][1=n,2=k]{\polytope{M}(#1,#2)}
\newcommand{\MPPHypSimplTwo}[1][n]{\polytope{M}(#1,2)}
\newcommandx{\gZono}[1][1=G]{\mathsf{Z}_{#1}}
\newcommandx{\Asso}[2][1=n,2={}]{\mathsf{Asso}^{#2}(#1)} 
\newcommandx{\dZono}[1][1=\b{h}]{\mathsf{D}_{#1}} 
\newcommand{\simplex}{\polytope{\Delta}} 
\newcommandx{\Fan}[1][1=F]{\mathcal{#1}} 
\newcommandx{\nestedFan}[1][1=\quiver]{\mathcal{F}(#1)} 
\newcommandx{\ray}[1][1=r]{\b{#1}} 
\newcommandx{\rays}[1][1=R]{\b{#1}} 
\newcommandx{\Perm}[1][1=n]{\polytope{Perm}_{#1}}
\newcommandx{\gArr}[1][1=G]{\mathcal{A}_{#1}} 
\newcommandx{\gFan}[1][1=G]{\Fan_{#1}} 
\newcommandx{\gFanO}[1][1=G]{\widehat{\Fan}_{#1}} 
\newcommandx{\cc}[1][1=G]{\mathbb{K}_{#1}} 
\newcommandx{\braid}[1][1=n]{\mathcal{B}_{#1}} 
\newcommandx{\sbraid}[1][1=n]{\widehat{\mathcal{B}}_{#1}} 
\newcommandx{\coefficient}[3][1={\b{s}}, 2=\b{r}, 3=\b{r}']{\alpha_{#2,#3}(#1)} 
\newcommandx{\virtualPolytopes}[1][1=d]{\mathbb{V}^{#1}} 
\newcommandx{\VDP}[1][1=n]{\mathbb{VDP}^{#1}} 
\newcommandx{\CVDP}[1][1=n]{\overrightarrow{\mathbb{VDP}}^{#1}} 
\newcommand{\VD}[1][1=n]{\mathbb{VD}} 
\newcommandx{\opcone}[1][1={\mu,\omega}]{\polytope{C}_{#1}}
\newcommandx{\orcone}[1][1={\omega}]{\polytope{C}_{#1}}
\newcommandx{\graphG}[1][1=G]{#1} 
\newcommandx{\hypergraph}[1][1=H]{\graphG[#1]} 
\newcommandx{\tube}[1][1=t]{\mathsf{#1}} 
\newcommandx{\tubes}[1][1=\graphG]{\building#1} 
\newcommandx{\tubing}[1][1=T]{\mathsf{#1}} 
\newcommand{\building}{\mathcal{B}} 
\newcommandx{\nested}[1][1=N]{\mathcal{#1}} 
\newcommand{\vmin}{\b v_{\min}}
\newcommand{\vmax}{\b v_{\max}}
\newcommand{\slope}{\tau_{\b \omega}}
\newcommandx{\enhancedStep}[3][1=i, 2=j, 3=a]{#1 \xrightarrow{#3} #2}
\newcommandx{\step}[2][1=i, 2=j]{#1 \rightarrow #2}
\newcommandx{\enhancedStepx}[3][1=x, 2=y, 3=z]{#1 \xrightarrow{#3} #2}
\newcommandx{\enhancedStepZ}[3][1=x, 2=y, 3=Z]{#1 \xrightarrow{#3} #2}
\renewcommand\t{\b t}%
\newcommandx{\ConstrainedMALIntrinsic}[2][1=n,2=d]{\go M_{#1,#2}}
\newcommandx{\AssGraph}[2][1=n,2=3]{\mathit{Asso}_{#1}^{#2}}
\newcommandx{\Fpolytope}[3][1=d,2=A,3=\b t]{\polytopeP_{#1}^f\left(#2,#3 \right)}
\newcommandx{\Bpolytope}[3][1=d,2=A,3=\b t]{\polytopeP_{#1}^b \left(#2,#3 \right)}
\newcommandx{\FibPol}[3][1=\polytopeP,2=\polytopeQ,3=\pi]{\polytope{\Sigma}_{#3}(#1,#2)}
\newcommandx{\FibPolCyc}[2][1=d,2=\t]{\polytope{\Sigma}^{#1}_{2}(#2)}
\newcommandx{\PolProj}[3][1=\polytopeP,2=\polytopeQ,3=\pi]{#3~:~#1\to #2}
\newcommandx{\HOmega}[3][1=\kappa,2=\t,3=d]{\Omega_{\kappa}^d(\t)}
\newcommandx{\rHOmega}[3][1=\kappa,2=\t,3=d]{\overline{\Omega}_{\kappa}^d(\t)}
\newcommandx{\Ppolytope}[3][1=d,2=T,3=\t]{\polytopeQ^+_{#1}(#2,#3)}
\newcommandx{\Npolytope}[3][1=d,2=T,3=\t]{\polytopeQ^-_{#1}(#2,#3)}
\newcommandx{\gVdM}[3][1=n,2=k,3=\b \lambda]{\text{VdM}_{#1,#2}(#3)}
\newcommandx{\VdM}[2][1=n,2=\b\lambda]{\text{VdM}_{#1}(#2)}
\newcommand{\inner}[1]{\left<#1\right>}
\title{Vertices of the monotone path polytopes of hypersimplices}
\author{Germain Poullot}
\date{}
\begin{document}

\maketitle

\begin{abstract}
The monotone path polytope of a polytope $\polytopeP$ encapsulates the combinatorial behavior of the shadow vertex rule (a pivot rule used in linear programming) on $\polytopeP$.
Computing monotone path polytopes is the entry door to the larger subject of fiber polytopes, for which explicitly computing examples remains a challenge.
We first give a detailed presentation on how to construct monotone path polytopes.

Monotone path polytopes of cubes and simplices have been known since the seminal article of Billera and Sturmfels \cite{BilleraSturmfels-FiberPolytope}.
We extend these results to hypersimplices by linking this problem to the combinatorics of lattice paths.
Indeed, we give a combinatorial model which describes the vertices of the monotone path polytope of the hypersimplex $\HypSimplTwo$ (for any generic direction).
With this model, we give a precise count of these vertices, and furthermore count the number of coherent monotone paths on $\HypSimplTwo$ according to their lengths.

We prove that some of the results obtained also hold for hypersimplices $\HypSimpl$ for $k\geq 2$.
\end{abstract}

\section*{Acknowledgment}

The author wants to thank Jes\'us De Loera for the very interesting discussion we had in Paris in December 2022, together with Eva Philippe.
\Cref{ssec:CountingByLengths} follows from the questions we discussed there.
Besides, a lot of gratitude goes to Arnau Padrol and Vincent Pilaud for the numerous encouragements to work on the subject and the re-reading that ensued, during the thesis of the author,
and to Martina Juhnke for her precious advises and corrections during the transformation of this extract of a thesis into an article.
Special thanks go to Alex Black for asking the author the original question that this article solves during the conference in Bielefeld in September 2022.

\tableofcontents

\begin{small}
\listoffigures
\end{small}


\section{Introduction}

After their introduction by Billera and Sturmfels \cite{BilleraSturmfels-FiberPolytope},  fiber polytopes received a lot of attention.
In particular, the fiber polytope for the projection of a polytope $\polytopeP$ onto a segment encapsulates the combinatorics of (coherent) monotone paths on $\polytopeP$:
it is the \defn{monotone path polytope} of $\polytopeP$ \cite{athanasiadis1999piles,athanasiadis2000monotone,blanchard2020length}.
Its vertices are in bijection with the monotone paths that can be followed by a shadow vertex rule.
As such, it links the world of linear optimization to the world of triangulations.

This, and the fact that monotone path polytopes stand among the easiest fiber polytopes to compute, have motivated numerous studies on the subject.
Especially, the monotone path polytope of a simplex is a cube \cite{BilleraSturmfels-FiberPolytope}, the one of a cube is a permutahedron \cite[Example 9.8]{BilleraSturmfels-FiberPolytope,Ziegler-polytopes}, the one of a cyclic polytope is a cyclic zonotope \cite{Athanasiadis_2000}, the one of a cross-polytope is the signohedron \cite{BlackDeLoera2021monotone}, and the one of an $S$-hypersimplex is a permutahedron \cite{ManeckeSanyalSo2019shypersimplices}.

However, monotone path polytopes of the hypersimplices have not yet been fully explored.
The \defn{$(n,k)$-hypersimplex $\HypSimpl$} is 
defined as the convex hull of all $(0,1)$-vectors with $k$ ones and $n-k$ zeros \cite[Example 0.11]{Ziegler-polytopes}.
Recently, Olarte and Santos \cite{OlarteSantos-HypersimplicialSubdivisions} introduced \emph{$k^\text{th}$-hypersecondary polytopes} which is fiber polytope for the projection of the hypersimplex $\HypSimpl$ onto (the $k^\text{th}$ Minkowski sum of) a point configuration.
Although not discussed in \cite{OlarteSantos-HypersimplicialSubdivisions}, when this point configuration is contained into a line, then the hypersecondary polytope is equal to the monotone path polytope of the hypersimplex $\HypSimpl$.
Besides, in \cite[Section 8]{BlackSanyal-FlagPolymatroids}, the authors consider monotone path polytopes of hypersimplices (seen as matroid polytopes) for very specific directions $\b e_S := \sum_{i\in S} \b e_i$ for $S\subseteq [n]$, but the case of generic directions was left wide open.

\vspace{0.15cm}

In the present paper, we begin with a general introduction to monotone path polytopes (\Cref{ssec:ConstructionOfMPP}), and then examine monotone path polytopes of hypersimplices (\Cref{ssec:CoherentPathsOnHypSimpl}), especially their vertices.
Even though we borrow the definition of monotone path polytopes from the realm of fiber polytopes, we do not aim at giving a presentation of fiber polytopes but rather of monotone path polytopes for themselves.
Therefore, the reader unfamiliar with fiber polytopes should feel at ease (after \Cref{def:MPP_via_fiber}), and if he or she wants an instructive and illustrated presentation of fiber polytopes, we advise to look at \cite[Chapter 9]{Ziegler-polytopes}, or at a more in-depth explanation in \cite[Section 2]{Athanasiadis_2000} and \cite[Chapter 9.1]{DeLoeraRambauSantos-Triangulations}, and at the original article \cite{BilleraSturmfels-FiberPolytope}.

We give a necessary criterion for a monotone path on $\HypSimpl$ to appear as a vertex of its monotone path polytope (\Cref{ssec:EnhancedStepCriterion}), and link this criterion to the case of a non-generic direction (\Cref{ssec:NonGenericDirection}).
We prove that this criterion is furthermore sufficient in the case of the second hypersimplex $\HypSimplTwo$, by constructing a combinatorial model of lattice paths to study it (\Cref{ssec:TowardsLatticePaths}), and by proving an induction process on this model (\Cref{ssec:InductionProcess}).
We finish by giving the exact count of the vertices of the monotone path polytope of $\HypSimplTwo$ (\Cref{ssec:CountingTotal,thm:NumberVertsMPPHypSimplTwo}): by detailing our formula, we can even count the number of coherent monotone paths on $\HypSimplTwo$ according to their lengths (\Cref{ssec:CountingByLengths}).

\section{Preliminaries on monotone paths polytopes}\label{ssec:MonotonePathPolytope}

\subsection{Explicit construction(s) of monotone path polytopes}\label{ssec:ConstructionOfMPP}

In general, fiber polytopes are, by construction, complicated to compute.
As a simple case, fiber polytopes for projections onto a point are trivial: the fiber polytope of a polytope $\polytopeP$ onto a point equals $\polytopeP$ itself.
Hence, among the first cases one would want to investigate are the fiber polytopes associated to projections onto a 1-dimensional polytope, \ie a segment.
We state here the formal definition coming from fiber polytopes for reference, but we will never use it, preferring instead the reformulation of \Cref{thm:MPPSumSectionAtVertices,thm:FaceLatticeMPP}: the non-expert reader does not need the following definition to understand the results thereafter.

In this article, a (bounded) \defn{linear program $(\polytopeP, \b c)$} is a couple formed by a polytope $\polytopeP\subset \R^d$ and a vector $\b c\in \R^d$ that shall be thought as the direction along which we want to optimize.
Except if we say otherwise, we always take a \defn{generic direction} $\b c$, that is to say $\inner{\b c, \b v - \b u} \ne 0$ for every edge $\b u\b v$ of $\polytopeP$.
This ensures that the scalar product against $\b c$ is minimized (respectively maximized) by a unique vertex of $\polytopeP$: \defn{$\b v_{\min} := \text{argmin}\{\inner{\b c, \b x}~;~\b x\in \polytopeP\}$} and \defn{$\b v_{\max} := \text{argmax}\{\inner{\b c, \b x}~;~\b x\in \polytopeP\}$}.

\begin{definition}\label{def:MPP_via_fiber}
For a linear program $(\polytopeP,\b c)$, the \defn{monotone path polytope} $\MPP$ is the fiber polytope for the projection $\pi_{\b c}:\b x\mapsto\inner{\b x,\b c}$.
Denoting the image segment $\polytopeQ = \pi_{\b c}(\polytopeP) = \{\inner{\b x,\b c} ~;~ \b x\in \polytopeP\}$, one has:
$$\MPP := \left\{\int_{\polytopeQ} \gamma(x)\mathrm{d}x ~~;~~ \gamma \text{ section of } \pi_{\b c}\right\}$$
\end{definition}

The polytope $\MPP$ has dimension $\dim(\polytopeP)-1$ but is embedded in $\R^{\dim(\polytopeP)}$.

The monotone path polytope, though arising from a fiber polytope point of view, is deeply linked to linear programming.
The following theorem formalizes this link.
We will then detail some examples, in order to clarify the definitions.

\begin{definition}\label{def:CellularString}
For a linear program $(\polytopeP,\b c)$, a \defn{cellular string} is a sequence $\sigma = (\polytopeF_1,\dots,\polytopeF_k)$ of faces of $\polytopeP$ (of dimensions at least 1) such that $\min(\polytopeF_1) = \b v_{\min}$, $\max(\polytopeF_k) = \b v_{\max}$, and for all $i\in [k-1]$, $\max(\polytopeF_i) = \min(\polytopeF_{i+1})$, where minima and maxima are taken with respect to the scalar product against $\b c$.
Cellular strings are ordered by containment of their union to form a partially ordered set (which is known to be a lattice).
The \defn{finest} cellular strings are the smallest (non-empty) cellular strings for this order, \ie sequences of edges.
\end{definition}

\begin{definition}\label{def:CoherentCellularString}
For a linear program $(\polytopeP, \b c)$ and a secondary direction $\b\omega\in \R^d$ (linearly independent to $\b c$), one can consider the polygon \defn{$\polytopeP_{\b c,\b \omega}$} obtained by projecting $\polytopeP$ onto the plane spanned by $(\b c, \b \omega)$:
$$\polytopeP_{\b c, \b \omega} := \left\{\bigl(\inner{\b x,\b c},~\inner{\b x,\b \omega}\bigr) ~;~ \b x\in \polytopeP\right\}$$

A proper face (vertex or edge) $\polytope{G}$ of $\polytopeP_{\b c, \b \omega}$ is an \defn{upper face} if it has an outer normal vector with positive second coordinate\footnote{Some definitions in the literature use lower faces, we take upper faces to ease drawings and notations.}, equivalently if $(x_1, x_2) + (0, \varepsilon) \notin \polytopeP_{\b c, \b\omega}$ for all $(x_1, x_2) \in \polytope{G}$, and $\varepsilon > 0$.

A cellular string $\sigma$ is \defn{coherent} if there exists $\b\omega\in \R^d$ such that $\sigma$ is the family of pre-images by~$\pi_{\b c}$ of the lower faces of $\polytopeP_{\b c, \b \omega}$.
In this case, such an $\b\omega$ is said to \defn{capture} the cellular string $\sigma$.
\end{definition}

\begin{theorem}\label{thm:FaceLatticeMPP}\emph{(\cite[Theorem 2.1]{BilleraSturmfels-FiberPolytope}).}
The face lattice of $\MPP$ is the lattice of coherent cellular strings on $\polytopeP$.
\end{theorem}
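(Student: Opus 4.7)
The plan is to identify faces of $\MPP$ by optimizing linear functionals, and match each optimal face to a coherent cellular string via the capturing direction $\b\omega$. First I would observe that a linear functional on $\R^{\dim\polytopeP}$ applied to a section integral becomes $\int_{\polytopeQ}\inner{\gamma(x),\b\omega}\,\mathrm{d}x$, so among all sections of $\pi_{\b c}$, those whose integral attains the maximum are precisely the sections $\gamma$ that pointwise maximize $\inner{\,\cdot\,,\b\omega}$ on each fiber $\pi_{\b c}^{-1}(x)$. Since $\pi_{\b c}^{-1}(x)$ is a polytope, this fiberwise maximizer is the face $\{\b y\in\pi_{\b c}^{-1}(x) : \inner{\b y,\b\omega}=\max\}$. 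Reading this fiberwise maximum through the projection $\polytopeP\to\polytopeP_{\b c,\b\omega}$, the locus of $\b\omega$-maximizers is exactly the preimage of the upper boundary of the planar polygon $\polytopeP_{\b c,\b\omega}$.

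Next I would decode this preimage: the upper boundary of $\polytopeP_{\b c,\b\omega}$ consists of upper edges $\polytope{G}_1,\dots,\polytope{G}_k$ (ordered by the $\b c$-direction) meeting at upper vertices, and the pulled-back faces $\polytopeF_i := \pi_{\b c,\b\omega}^{-1}(\polytope{G}_i)$ of $\polytopeP$ form a sequence satisfying $\min(\polytopeF_1)=\vmin$, $\max(\polytopeF_k)=\vmax$, and $\max(\polytopeF_i)=\min(\polytopeF_{i+1})$ --- i.e.\ a cellular string on $(\polytopeP,\b c)$, and by definition the one captured by $\b\omega$. I would then argue that the set of sections supported on $\bigcup_i \polytopeF_i$ is a polytope (the product over $i$ of sections inside $\polytopeF_i$), that its image under the integration map is the face $F_{\b\omega}$ of $\MPP$ maximizing $\b\omega$, and that this face depends only on the cellular string, not on the particular $\b\omega$ capturing it. This gives a well-defined map from coherent cellular strings to the faces of $\MPP$.

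I would then prove this map is a bijection. Surjectivity follows because every face of $\MPP$ is of the form $F_{\b\omega}$ for some $\b\omega$ (after subtracting a multiple of $\b c$, which does not change the functional on $\MPP$ since all sections have equal $\b c$-integral, so one may assume $\b\omega$ is linearly independent from $\b c$). Injectivity follows by reconstructing the cellular string directly from $F_{\b\omega}$: the extreme sections in $F_{\b\omega}$ are exactly those living in the preimage of the upper boundary of $\polytopeP_{\b c,\b\omega}$, and this preimage is intrinsic to the face. Finally, for the lattice structure, I would check order preservation: if $\b\omega'$ is a small perturbation of $\b\omega$ in a direction flattening the upper boundary of $\polytopeP_{\b c,\b\omega}$, then the cellular string captured by $\b\omega'$ refines the one captured by $\b\omega$, and the corresponding face $F_{\b\omega'}$ sits inside $F_{\b\omega}$; conversely, any face-inclusion $F'\subseteq F$ arises by such a perturbation.

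The main obstacle is the bijective correspondence between sections supported on the $\polytopeF_i$ and points of the target face $F_{\b\omega}$: one must confirm that integrating over such sections produces the \emph{entire} affine cone of integrals parametrizing $F_{\b\omega}$, not merely a proper subset, and that distinct cellular strings yield distinct faces. This essentially amounts to showing that the integration map $\gamma\mapsto\int\gamma$ restricted to sections in $\bigcup_i\polytopeF_i$ has image of the expected affine dimension, which can be verified by writing a section in $\polytopeF_i$ as a convex combination of sections supported on the vertices and edges of $\polytopeF_i$ and tracking the resulting integrals; this is the only place where a genuine computation is needed, and it is the fiber-polytope content of \cite[Theorem 2.1]{BilleraSturmfels-FiberPolytope} specialized to the segment case.
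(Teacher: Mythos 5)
The paper does not prove this theorem: it cites it directly as \cite[Theorem 2.1]{BilleraSturmfels-FiberPolytope} and builds on it without reproducing the argument, so there is nothing in this article to compare your proposal against. That said, your sketch is a faithful outline of the Billera--Sturmfels argument itself: maximizing $\b\omega$ over $\MPP$ picks out sections that fiberwise maximize $\inner{\cdot,\b\omega}$, these sections are supported on the preimage of the upper boundary of the shadow polygon $\polytopeP_{\b c,\b\omega}$, and that preimage is exactly the captured cellular string; surjectivity is supporting-hyperplane theory after noting that $\b\omega$ may be taken modulo $\b c$, injectivity is reconstructing the string from the extreme sections, and the lattice order follows from perturbing $\b\omega$. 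The one point where your wording is a little loose is calling the set of admissible sections on $\bigcup_i\polytopeF_i$ ``a product over $i$''; the precise statement is that the resulting face of $\MPP$ is the Minkowski sum of the fiber polytopes of the $\polytopeF_i$ (this is the structural lemma of Billera--Sturmfels), which is also exactly what delivers the dimension count you flag as the remaining computation. Modulo that rephrasing and the dimension verification you already acknowledge, your outline is correct and is the standard proof.
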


\Cref{def:CellularString,def:CoherentCellularString} may be hard to parse: we now look at the vertices of the monotone path polytopes $\MPP$ and expose this construction more clearly.
\Cref{fig:AnimatedMPPSimplex} gives an illustration.

Fix a polytope $\polytopeP\subset\R^d$, and a generic direction $\b c\in \R^d$.
Each edge $\b u\b v$ of $\polytopeP$ is oriented by $\b c$, from $\b u$ to $\b v$ if and only if $\inner{\b c, \b u} < \inner{\b c, \b v}$.
The finest cellular strings are \defn{monotone paths} on $\polytopeP$, that is to say a sequence of vertices $\b v_0, \b v_1, \dots, \b v_r$ such that $\b v_0 = \b v_{\min}$, $\b v_r = \b v_{\max}$ and $\b v_i\b v_{i+1}$ is an edge of $\polytopeP$ directed by $\b c$ from $\b v_i$ to $\b v_{i+1}$.


\begin{figure}[t]
    \centering
\animategraphics[autoplay,loop,width=\textwidth,controls]{12}{Figures/MPPSimplexTurning/SimplexTurningV2-}{0}{63}
    \caption[\textcolor{cyan}{Animated} Construction of the normal fan of the monotone path polytope of $\simplex_3$]{Animated construction of the normal fan of the monotone path polytope of the 3-dimensional simplex.
    For each $\b\omega\in \R^3$ orthogonal to $\b c$, we project $\simplex_3$ onto the plane spanned by $(\b c,\b\omega)$ (Right), and record the corresponding coherent monotone path (Left).\linebreak
    \emph{(Animated figures obviously do not display on paper, please use a PDF viewer (like Adobe Acrobat Reader), or go on my personal website, or ask by email.)}\vspace{-0.3cm}}
    \label{fig:AnimatedMPPSimplex}
\end{figure}




Now, consider another direction $\b \omega\in \R^d$, linearly independent of $\b c$, and project $\polytopeP$ onto the plane spanned by~$(\b c, \b \omega)$.
In \Cref{fig:AnimatedMPPSimplex}, we take a tetrahedron and a direction $\b c$, and then scan through all possible $\b \omega$ (it is enough to scan only $\b\omega$ with $\inner{\b \omega, \b c} = 0$).
The tetrahedron is naturally projected onto the plane $(\b c, \b \omega)$ by looking at the boundary of the drawing (\ie the 3 or 4 outside edges).
This polygon has two paths from its minimal (leftmost) vertex to its maximal (rightmost) vertex: a lower one and a upper one.
When there exists $\b\omega$ such that a given monotone path $\c L$ is projected onto the upper path (and no $2$-face of $\polytopeP$ projects onto it), then $\c L$ is coherent.

The vertices of the monotone path polytope $\MPP$ are not in bijection to all the monotone paths on $\polytopeP$, but only to the coherent ones.
The faces of higher dimension are obtained following the same ideas.
In \Cref{fig:AnimatedMPPSimplex}, we record on the left the (coherent) monotone path obtained for each choice of $\b\omega$ on the chosen tetrahedron, in this case, all monotone paths are coherent.

We now present four ways to visualize the monotone path polytope: the two firsts focus on its normal fan, while the two lasts allow for an explicit computation of the vertices.
For computing the monotone path polytopes of hypersimplices, we will use \Cref{def:CoherentCellularString}, but we believe the reader may find it pleasant to see explicit constructions of the monotone path polytope in general.

\paragraph{Exploring the space of $\b\omega$, and projection of the normal fan of $\polytopeP$}
First of all, \Cref{def:CoherentCellularString} invites us to focus on the space of all $\b \omega$, and partition it depending on the coherent cellular strings they yield.
Precisely, to a cellular string $\sigma$ we associate $\c N(\sigma) = \{\b\omega~;~\b\omega \text{ captures } \sigma\}$.
Then $\c N(\sigma)$ is a polyhedral cone by linearity (in $\b \omega$) of the projection from $\polytopeP$ onto $\polytopeP_{\b c, \b \omega}$, and $\c N = \bigl(\c N(\sigma)\bigr)_{\sigma}$ is a fan.
This fan is the normal fan of $\MPP$.
Hence, one can run through all possible $\b\omega\in\R^d$, orthogonal to $\b c$ (as all $\b\omega + \lambda\b c$ capture the same cellular string for any $\lambda\in \R$), to draw the normal fan of $\MPP$, see \Cref{fig:AnimatedMPPSimplex} for the construction. 
A given $\b \omega$ captures the coherent path $(\b v_0, \dots, \b v_r)$ such that $\b v_0 = \b v_{\min}$, $\b v_r = \b v_{\max}$ and $\b v_i\b v_{i+1}$ is the edge of $\polytopeP$ with $\inner{\b v_i, \b c} < \inner{\b v_{i+1}, \b c}$ satisfying that $\frac{\inner
{\b v_{i+1} - \b v_i, \b \omega}}{\inner
{\b v_{i+1} - \b v_i, \b c}}$ is the unique maximizer of $\frac{\inner
{\b v_j - \b v_i, \b \omega}}{\inner
{\b v_j - \b v_i, \b c}}$ for $\b v_i\b v_j$ and edge of $\polytopeP$ with $\inner{\b v_i, \b c} < \inner{\b v_j, \b c}$.

This construction shows an important property: for a fixed $\b\omega$, all $\b \omega+\lambda\b c$ for $\lambda\in \R$ capture the same cellular string.
Consequently, one can obtain the normal fan of $\MPP$ by \emph{projecting} the normal fan of $\polytopeP$:
to each normal cone $\polytope{C}\in \c N_{\polytopeP}$, associate its projection along $\b c$, namely $\polytope{C}_{\perp} := \{\b x - \frac{\inner{\b x,\b c}}{\inner{\b c,\b c}}\b c ; \b x\in \polytope{C}\}$.
The common refinement of $\bigl(\polytope{C}_{\perp} ; \polytope{C}\in\c N_{\polytopeP}\bigr)$ is the normal fan\footnote{This construction embeds the fan $\c N_{\MPP}$ directly into the hyperplane $\b c^{\perp}$, instead of embedding it in $\R^{\dim(\polytopeP)}$.} of $\MPP$.

\begin{figure}
    \centering
    \includegraphics{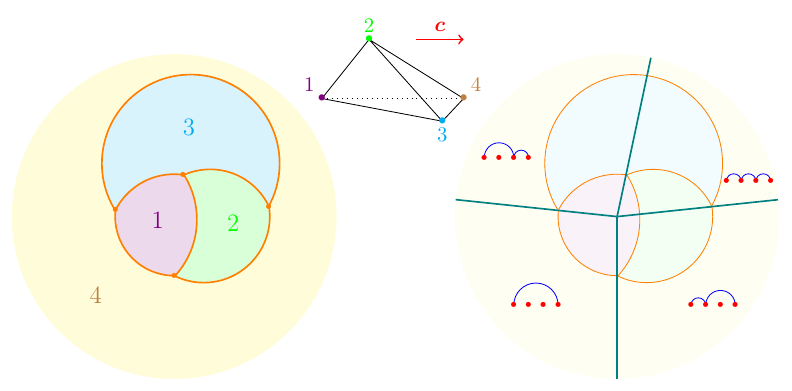}
    \caption[Normal fan of $\MPP$ of a tetrahedron through stereographic projection]{(Top) For reference, the tetrahedron $\polytopeP = \simplex_3$, and the direction $\b c$ from \Cref{fig:AnimatedMPPSimplex}.
    (Left) The stereographic projection of the normal fan of $\polytopeP$, each colored region correspond to (the normal cone of) a vertex of $\polytopeP$.
    (Right) Two rays give rise to the same coherent monotone path if and only they intersect the same colored regions, we draw the resulting fan, labeled accordingly.\vspace{-0.4cm}}
    \label{fig:StereographicMPP}
\end{figure}

\paragraph{Line bundle and stereographic projection}
A second way to visualize the combinatorics of the monotone path polytope $\MPP$ is to imagine the lines $\ell_{\b \omega} := (\b\omega + \lambda\b c ~;~ \lambda\in\R)$, and consider the \emph{line bundle} $(\ell_{\b\omega} ~;~ \b\omega\in\R^d)$.
Each line $\ell_{\b\omega}$ intersects the normal fan $\c N_{\polytopeP}$ of the polytope $\polytopeP$, and the cones it intersects describe the cellular string that $\b\omega$ captures: if $\ell_{\b\omega}$ intersects only maximal cones and cones of co-dimension 1, then it captures a coherent monotone path (which is the case for almost all $\b\omega$).
Looking at which maximal cones of $\c N_{\polytopeP}$ are intersected by $\ell_{\b\omega}$ yields the list of vertices forming the associated coherent monotone path;
whereas looking at which co-dimension~1 cones are intersected by $\ell_{\b\omega}$ yields the list of edges forming the associated coherent monotone path.

To visualize this easily (especially if $\dim\polytopeP = 3$), one can use the stereographic projection $\st_{\b c} : \R^d\to\R^{d-1}$ that maps the apex $\frac{\b c}{|\!|\b c|\!|}$ to infinity, see \Cref{fig:StereographicMPP}.
The normal fan $\c N_\polytopeP$ projects onto a subdivision $\st_{\b c}(\c N_\polytopeP)$ of $\R^{d-1}$ by spherical cap (\ie arcs of circles if $\dim\polytopeP = 3$).
Besides, the counterpart on the sphere of $\ell_{\b \omega}$, is the arc $\alpha_{\b\omega} = \bigl(\frac{\b \omega + \lambda\b c}{|\!|\b \omega + \lambda\b c|\!|} ~;~  \lambda\in\R\bigr)$.
This is an arc of a great circle containing the apex $\frac{\b c}{|\!|\b c|\!|}$ and its antipodal point:
thus $\st_{\b c}(\alpha_{\b\omega})$ is ray (from $\b 0\in \R^{d-1}$ to infinity).
The cells of $\st_{\b c}(\c N_\polytopeP)$ that the ray $\st_{\b c}(\alpha_{\b\omega})$ intersects are the cones of $\c N_\polytopeP$ that $\ell_{\b\omega}$ intersects, and hence describe the coherent path that $\b \omega$ captures: by looking how all rays of $\R^{d-1}$ intersect $\st_{\b c}(\c N_\polytopeP)$, we directly get a drawing of the normal fan of $\MPP$ in $\R^{d-1}$, see \Cref{fig:StereographicMPP} (Right).

This point of view can come in handy when one wants to vary $\b c$.
Indeed, varying $\b c$ amounts to varying the apex of the stereographic projection, \ie to ``roll'' the projection $\st_{\b c}(\c N_\polytopeP)$ inside $\R^{d-1}$.
This ``rolling'' is hard to describe, but at least, the rays we want to intersect it with remain fixed.

\paragraph{Convex hull of (explicit) points}
A third way to construct the monotone path polytope $\MPP$ is to use the following formula from \cite[Theorem 5.3]{BilleraSturmfels-FiberPolytope}. 
Let $V(\polytopeP) = \{\b v_1, \dots, \b v_n\}$ with $\inner{\b v_i, \b c} \leq \inner{\b v_j, \b c}$ for $i\leq j$.
For a monotone path $\c L = (\b v_{i_1}, \dots, \b v_{i_r})$ on $\polytopeP$ (with $i_1 = 1$ and $i_r = n$), denote 
$$\psi(\c L) = \sum_{j = 1}^r \frac{\inner{\b v_{i_j} - \b v_{i_{j-1}}, \b c}}{2\inner{\b v_n - \b v_1, \b c}}\, (\b v_{i_j} + \b v_{i_{j-1}})$$
Then $\MPP = \conv\bigl(\psi(\c L) ~;~ \text{for } \c L \text{ monotone path on }\polytopeP\bigr)$.
A point $\psi(\c L)$ is a vertex of $\MPP$ if and only if $\c L$ is a coherent monotone path.

For the case of the simplex, all monotone paths are coherent, so a figure would not be very enlightening.
We picture a better example in \Cref{fig:MPP42}: (Left) is drawn a 3-dimensional polytope, (Right) its monotone path polytope, obtained via the above formula.
The two red crosses correspond to non-coherent monotone paths $\c L$, for which the point $\psi(\c L)$ lie inside $\MPP$.

\paragraph{Minkowski sum of sections}
A forth way to visualize monotone path polytopes is to use \cite[Theorem 1.5]{BilleraSturmfels-FiberPolytope} which provides a re-writing of the integral of \Cref{def:MPP_via_fiber} as a finite Minkowski sum.
This sum is constructed as follows.
We begin by sorting the vertices of $\polytopeP$ according to their scalar product against $\b c$: $V(\polytopeP) = \{\b v_1,\dots, \b v_n\}$ with $\inner{\b v_i,\b c} \leq \inner{\b v_{i+1},\b c}$.
The segment $\polytopeQ = \pi_{\b c}(\polytopeP)$ is cut out by the projection into sub-segments $\polytope{C}_i := [q_i,q_{i+1}]$ with $q_i = \inner{\b v_i,\b c}$,
and the barycenter (\ie middle) of $\polytope{C}_i$ is trivially $b_i = \frac{q_i+q_{i+1}}{2}$.
The monotone path polytope $\MPP$ is normally equivalent to the Minkowski sum of sections $\sum_{i=1}^n \pi_{\b c}^{-1}(b_i)$.

Though exact, this construction is a bit unhandy.
Yet, as we will prove in \Cref{thm:MPPSumSectionAtVertices}, one can forget about centers, as $\MPP$ is normally equivalent to $\sum_{i=2}^{n-1} \pi_{\b c}^{-1}(q_i)$.
This gives beautiful pictures, see \Cref{fig:MPPSumSectionSimplex} for the case of the tetrahedron.

Note that, between the figures, a slight change of perspective happened.
The fans constructed in \Cref{fig:AnimatedMPPSimplex} (Left) and \Cref{fig:StereographicMPP} (Right) are the same, and are the normal fan of the $\MPP$ appearing in \Cref{fig:MPPSumSectionSimplex} (Right), even thought a right angle \emph{seems} to appear on the latter but not on the firsts.

\begin{figure}
    \centering
    \includegraphics{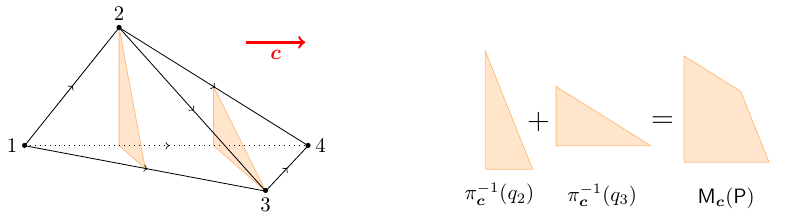}
    \caption[Construction of $\MPP$ as a sum of sections for the tetrahedron]{The construction of $\MPP$ as a sum of sections for the tetrahedron $\polytopeP = \simplex_3$.
    Each section is orthogonal to $\b c$ and contains a vertex (except for $\b v_{\min}$ and $\b v_{\max}$).}
    \label{fig:MPPSumSectionSimplex}
\end{figure}

\begin{theorem}[Folklore]\label{thm:MPPSumSectionAtVertices}
For a linear program $(\polytopeP, \b c)$ with $\b c$ generic, let $V(\polytopeP) = \{\b v_1,\dots, \b v_n\}$, and $q_i = \inner{\b v_i,\b c}$ with $q_1 \leq \dots \leq q_n$.
The monotone path polytope $\MPP$ is normally equivalent to the Minkowski sum of sections $\sum_{i=2}^{n-1} \{\b x\in \polytopeP ~;~ \inner{\b x,\b c} = q_i\}$.
\end{theorem}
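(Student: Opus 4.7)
The plan is to show directly that $\MPP$ and $\sum_{i=2}^{n-1} S_i$ have the same normal fan, where I write $S_i := \{\b x\in\polytopeP : \inner{\b x,\b c} = q_i\}$ for the slice of $\polytopeP$ at height $q_i$. Since $S_1 = \{\vmin\}$ and $S_n = \{\vmax\}$ are singletons contributing only a translation to any Minkowski sum, dropping them does not change the normal fan, so it is enough to match the partitions of the space of generic secondary directions $\b\omega$ induced by the two polytopes.

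For a generic $\b\omega$, the planar polygon $\polytopeP_{\b c,\b\omega}$ has an upper boundary (\Cref{def:CoherentCellularString}) which is the projection of the coherent monotone path $\mathcal L(\b\omega)$ captured by $\b\omega$. The height of this upper boundary above the abscissa $q_i$ equals $\max_{\b x\in S_i}\inner{\b x,\b\omega}$, so $\mathcal L(\b\omega)$ passes through the slice $S_i$ exactly at $v_i(\b\omega) := \arg\max_{\b x\in S_i}\inner{\b x,\b\omega}$. The map $\b\omega\mapsto \mathcal L(\b\omega)$ is constant on each maximal cone of the normal fan of $\MPP$ (by \Cref{thm:FaceLatticeMPP}), and $\b\omega\mapsto (v_i(\b\omega))_i$ is constant on each maximal cone of the normal fan of $\sum_{i=2}^{n-1} S_i$ (by the Minkowski-sum structure of normal fans). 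The tuple is clearly recovered from the path, so the first fan refines the second; the two fans coincide if and only if conversely the tuple determines the path.

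This last uniqueness assertion is the main delicate point. Given the tuple $(v_i)_{i=2}^{n-1}$, I would reconstruct the edge sequence of $\mathcal L$ by iterating the following compatibility on consecutive entries: between two consecutive critical heights $q_i$ and $q_{i+1}$ no vertex of $\polytopeP$ has its $\b c$-value in the open slab $(q_i,q_{i+1})$, so the upper boundary of $\polytopeP_{\b c,\b\omega}$ is a single line segment on $[q_i,q_{i+1}]$. Thus if $v_i$ lies in the relative interior of an edge $\b v_j\b v_k$ of $\polytopeP$ with $j < i < k$, then $v_{i+1}$ must lie on the same edge $\b v_j\b v_k$; and if $v_i = \b v_i$, then $v_{i+1}$ lies on some outgoing edge $\b v_i\b v_k$, uniquely identifying the next step of $\mathcal L(\b\omega)$. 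Iterating recovers the full edge sequence of $\mathcal L(\b\omega)$ from the tuple, closing the bijection and establishing the equality of normal fans, whence the claimed normal equivalence.
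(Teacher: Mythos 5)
Your proof is correct in substance, but it takes a genuinely different route from the paper's. The paper invokes the Billera--Sturmfels Minkowski--sum formula (their Theorem~1.5), which says $\MPP$ is normally equivalent to the sum of the slices of $\polytopeP$ taken at the \emph{midpoints} $\tfrac{q_i+q_{i+1}}{2}$, and then does a short Cayley-polytope computation: since no vertex of $\polytopeP$ lies strictly between heights $q_i$ and $q_{i+1}$, the slab $\{q_i\le\inner{\b x,\b c}\le q_{i+1}\}$ equals $\conv(\polytopeP_i,\polytopeP_{i+1})$, so the midpoint slice is exactly $\tfrac12\polytopeP_i+\tfrac12\polytopeP_{i+1}$; rearranging the Minkowski sum and discarding the two singleton slices finishes it. You instead compare the normal fans directly, showing that the two data --- the captured coherent path $\c L(\b\omega)$ and the tuple of slice-maximizers $(v_i(\b\omega))_i$ --- determine one another for generic $\b\omega$, hence induce the same partition of secondary directions and the same (complete) fan. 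What your route buys is that it does not need the quantitative content of \cite[Theorem 1.5]{BilleraSturmfels-FiberPolytope} at all, only the qualitative description of the normal fan of $\MPP$ via coherent paths (essentially \Cref{thm:FaceLatticeMPP}), and the reconstruction step makes the geometric mechanism visible. What the paper's route buys is brevity and freedom from genericity bookkeeping, which your argument quietly relies on.

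On that last point: two spots in your reconstruction are slightly loose and worth tightening. First, you tacitly use that an $\b\omega$ interior to a maximal cone of $\c N(\MPP)$ is automatically generic for every slice $S_i$, i.e.\ that $\arg\max_{S_i}\inner{\cdot,\b\omega}$ is a single vertex; this does hold, because the preimage of the point at abscissa $q_i$ on the upper chain is precisely that argmax, and for a captured \emph{path} this preimage is a single point, but it deserves to be said. Second, ``if $v_i = \b v_i$'' is ambiguous when several vertices of $\polytopeP$ share the height $q_i$; the intended case split is ``$v_i$ is a vertex of $\polytopeP$'' versus ``$v_i$ is interior to an edge crossing height $q_i$'', which is what your argument actually uses. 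With those two clarifications the proof is complete.
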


\begin{remark}
As for the three previous constructions, this theorem is not new, but is folklore.
We give a self-contained proof: the reader shall rather think of it as an exercise on Cayley polytopes.
\end{remark}

\begin{proof}[Proof of \Cref{thm:MPPSumSectionAtVertices}]
Let $\polytopeP_{i} = \{\b x\in \polytopeP ~;~ \inner{\b x, \b c} = q_i\}$, and $\polytopeP_{i, i+1} = \{\b x\in \polytopeP ~;~ q_i\leq \inner{\b x, \b c} \leq q_{i+1}\}$.
Clearly: $\polytopeP_{i, i+1} = \conv\bigl(\polytopeP_i, \,\polytopeP_{i+1}\bigr)$.
In particular: $\{\b x\in \polytopeP ~;~ \inner{\b x, \b c} = \frac{q_i+q_{i+1}}{2}\} = \frac{1}{2}\polytopeP_i + \frac{1}{2}\polytopeP_{i+1}$.
By \cite[Theorem 1.5]{BilleraSturmfels-FiberPolytope}, the polytope $\MPP$ is normally equivalent to $\sum_i \{\b x\in \polytopeP ~;~ \inner{\b x, \b c} = \frac{q_i+q_{i+1}}{2}\}$.
Hence, $\MPP$ is normally equivalent to $\sum_i\Bigl(\frac{1}{2}\polytopeP_i+\frac{1}{2}\polytopeP_{i+1}\Bigr) = \polytopeP_1 + \polytopeP_n + \sum_i \polytopeP_i$.
As $\polytopeP_0 = \{\b v_1\}$ and $\polytopeP_n = \{\b v_n\}$ are points, they only amount to translation of the whole sum and can be removed without changing the normal equivalence.
\end{proof}

\subsection{Coherent paths on hypersimplices}\label{ssec:CoherentPathsOnHypSimpl}

The rest of this section is devoted to monotone path polytopes of hypersimplices, and especially hypersimplices for $k = 2$.
Before presenting new results on this subject, we shortly recall two former results from Billera and Sturmfels \cite[end of Section 5]{BilleraSturmfels-FiberPolytope}.

\begin{theorem}[\cite{BilleraSturmfels-FiberPolytope}]\label{thm:BS92-Simplex}
For any simplex $\polytope{\Delta}$ on $n+1$ vertices, and any generic direction $\b c$, the monotone path polytope $\MPP[\polytope{\Delta}]$ is (isomorphic to) a cube of dimension $n-1$.
\end{theorem}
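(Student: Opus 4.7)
The plan is to apply \Cref{thm:FaceLatticeMPP} and show that the lattice of coherent cellular strings on a simplex $\simplex$ is isomorphic to the face lattice of an $(n-1)$-cube. After ordering the vertices as $\b v_0, \ldots, \b v_n$ with strictly increasing $\b c$-values $q_0 < \ldots < q_n$ (possible by genericity of $\b c$), the key observation is that the $\b v_i$ are affinely independent. Consequently, any profile $(\omega_0, \ldots, \omega_n) \in \R^{n+1}$ can be realized, up to an irrelevant additive constant, as $\omega_i = \inner{\b\omega, \b v_i}$ for some $\b\omega$. Since additive shifts do not change the upper convex hull of the plane points $(q_i, \omega_i)$, one has complete freedom over which upper-hull pattern appears.

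I would then encode each upper-hull pattern by a ternary label $\ell(i) \in \{A, I, S\}$ for $i \in \{1, \ldots, n-1\}$: $A$ (anchor) when $(q_i, \omega_i)$ is a vertex of the upper hull, $I$ (in-block) when it lies in the relative interior of an upper edge, $S$ (skipped) when it is strictly below the hull. For any prescribed labeling one builds an explicit witness $\b\omega$ by setting $\omega_i = -q_i^2$ at $A$-indices (producing a strictly concave chain), linearly interpolating at $I$-indices between the nearest $A$-indices, and setting $\omega_i$ very negative at $S$-indices. This exhibits a bijection between coherent cellular strings on $\simplex$ and ternary words in $\{A, I, S\}^{n-1}$; in particular the $2^{n-1}$ monotone paths correspond to the labelings in $\{A, S\}^{n-1}$ and give all vertices of $\MPP[\simplex]$.

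The final step is to verify that the containment-of-unions order on coherent cellular strings matches the coordinatewise order on $\{A, I, S\}^{n-1}$ under the identification $A \leftrightarrow 1$, $S \leftrightarrow 0$, $I \leftrightarrow *$. The implication $\ell_2(i) = S \Rightarrow \ell_1(i) = S$ is immediate from the union: $\b v_i \notin \bigcup \sigma_2 \supseteq \bigcup \sigma_1$. The critical rigidity is $\ell_2(i) = A \Rightarrow \ell_1(i) = A$: an anchor of $\sigma_2$ at $i$ separates two of its blocks, so no single $\sigma_2$-block contains vertices on both sides of $i$; hence any $\sigma_1$-block whose index range straddles $i$ cannot be contained in any $\sigma_2$-block, which forces $i$ to be an anchor of $\sigma_1$ too. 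Meanwhile $\ell_2(i) = I$ places no constraint, since $i$ is strictly interior to a $\sigma_2$-block that can accommodate any configuration of $\sigma_1$ at $i$. Combined, these reproduce exactly the cube's face order, and \Cref{thm:FaceLatticeMPP} yields the combinatorial isomorphism $\MPP[\simplex] \cong \Cube[n-1]$.

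The main obstacle is the last step, and specifically the rigidity argument for $A$-labels: a naive analysis based only on which vertices are in or out of the union would incorrectly allow $\ell_1(i) \in \{I, S\}$ while $\ell_2(i) = A$; one must instead carefully track containment of the full face-blocks (not just their vertex sets) in order to derive the sharper constraint that $\ell_1(i)$ must also be $A$.
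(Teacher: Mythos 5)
The paper cites this theorem to Billera--Sturmfels and does not prove it, so there is no internal proof to compare against; I will assess your argument on its own.

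Your proof is correct, and it is a pleasant self-contained combinatorial route that relies only on \Cref{thm:FaceLatticeMPP}. The two load-bearing observations are sound: (1) since $\b v_0,\dots,\b v_n$ are affinely independent, the linear map $\b\omega\mapsto\bigl(\inner{\b\omega,\b v_i-\b v_0}\bigr)_{i=1}^n$ is surjective onto $\R^n$, so every height profile (modulo the irrelevant global shift) is realizable, and hence every cellular string on $\simplex$ is coherent; (2) for a simplex, a face contained in a union of blocks of a cellular string must lie in a single block (take a relative-interior point and look at which barycentric coordinates are positive), which is exactly what makes the $A$-rigidity argument go through. The witness $\omega_i=-q_i^2$ on anchors, linear interpolation on $I$-indices, and very negative values on $S$-indices does produce the intended upper hull. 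Your resolution of the "critical rigidity" ($\ell_2(i)=A\Rightarrow\ell_1(i)=A$) is the right one and is the genuinely non-obvious step, as you correctly flag.

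Two minor omissions worth tightening if this were written out in full. First, you assert that the labeling map is a bijection between coherent cellular strings and $\{A,I,S\}^{n-1}$; injectivity (i.e.\ that the labeling determines the cellular string, using that each block's index set lies between two consecutive anchors) is true but deserves a sentence. Second, you verify only the forward implication $\sigma_1\le\sigma_2\Rightarrow(\ell_2(i)\ne I\Rightarrow\ell_1(i)=\ell_2(i))$; the converse (that if the labels are compatible then $\bigcup\sigma_1\subseteq\bigcup\sigma_2$) is also needed for an order \emph{isomorphism}, and while easy — each $\sigma_1$-block sits between consecutive $\sigma_1$-anchors, which lie inside a $\sigma_2$-block whose $I$-set contains the $\sigma_1$-$I$-set there — it should be stated. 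Neither of these is a real gap; the argument is complete in structure and the conclusion $\MPP[\simplex]\cong\Cube[n-1]$ follows.
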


\begin{theorem}[\cite{BilleraSturmfels-FiberPolytope}]
For the standard cube $\Cube = [0,1]^n$ of dimension $n$, and the direction $\b c = (1,\dots,1)$, the monotone path polytope $\MPP[\Cube]$ is (a dilation of) the permutahedron $\permuto$ of dimension $n-1$: $\permuto = \conv\Bigl\{\bigl(\sigma(1), \dots, \sigma(n)\bigr) ~;~ \sigma\in S_n\Bigr\}$.
\end{theorem}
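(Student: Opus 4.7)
The plan is to compute $\MPP[\Cube]$ explicitly via a Minkowski-sum decomposition and recognize the result as a translate of $\permuto$. Although $\b c = (1, \ldots, 1)$ is not generic on $\Cube$, the Billera--Sturmfels formula \cite[Theorem 1.5]{BilleraSturmfels-FiberPolytope} applies to the compatible subdivision of $\pi_{\b c}(\Cube) = [0, n]$ into unit intervals with midpoints $k + \tfrac{1}{2}$, giving
\[
\MPP[\Cube] \;=\; \sum_{k=0}^{n-1} \bigl\{\b x \in \Cube : \inner{\b x, \b c} = k + \tfrac{1}{2}\bigr\}.
\]
(Alternatively, one could perturb $\b c$ to a generic direction $\b c_\varepsilon$ and invoke \Cref{thm:MPPSumSectionAtVertices}, at the cost of only obtaining normal equivalence.)

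Next I would establish the slice identity $\bigl\{\b x \in \Cube : \sum_i x_i = k + \tfrac{1}{2}\bigr\} = \tfrac{1}{2}\bigl(\HypSimpl[n][k] + \HypSimpl[n][k+1]\bigr)$ for each $k \in \{0, \ldots, n-1\}$. The inclusion $\supseteq$ is immediate from componentwise averaging. For $\subseteq$, it suffices by convexity to check the inclusion on vertices of the slice, which (by the usual characterization of vertices of a cube-slice at a half-integer height) are exactly the points $\tfrac{1}{2}\b e_{i_0} + \sum_{j \in S} \b e_j$ with $|S| = k$ and $i_0 \notin S$. Each such vertex decomposes as $\tfrac{1}{2}\bigl(\sum_{j \in S} \b e_j + (\b e_{i_0} + \sum_{j \in S} \b e_j)\bigr)$, with the two summands lying respectively in $\HypSimpl[n][k]$ and $\HypSimpl[n][k+1]$. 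Substituting this identity into the Minkowski sum and telescoping, using $\HypSimpl[n][0] = \{\b 0\}$ and $\HypSimpl[n][n] = \{\b 1\}$, collapses everything to
\[
\MPP[\Cube] \;=\; \tfrac{1}{2}\b 1 + \sum_{k=1}^{n-1} \HypSimpl[n][k].
\]

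To recognize this Minkowski sum as a translate of $\permuto$, I would identify its vertices directionally: for $\b d \in \R^n$ with $d_{\pi(1)} > \cdots > d_{\pi(n)}$, the $\b d$-maximizer of $\HypSimpl[n][k]$ is $\b e_{\pi(1)} + \cdots + \b e_{\pi(k)}$, so the $\b d$-maximizer of the whole sum is $\sum_{i=1}^{n-1}(n - i)\, \b e_{\pi(i)}$. As $\pi$ ranges over $S_n$, these vectors range over all permutations of $(0, 1, \ldots, n-1)$, whence $\sum_{k=1}^{n-1} \HypSimpl[n][k] = \permuto - \b 1$ and therefore $\MPP[\Cube] = \permuto - \tfrac{1}{2}\b 1$, a translate (and hence a trivial dilation) of $\permuto$. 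The main technical obstacle is the non-genericity of $\b c$: since \Cref{thm:MPPSumSectionAtVertices} is stated for generic directions, the cleanest fix is the direct appeal to the underlying Billera--Sturmfels formula above, with the compatible (non-perturbed) subdivision; verifying the slice identity also requires small but careful bookkeeping at the vertex level.
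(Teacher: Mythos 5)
The paper states this theorem as a citation to Billera--Sturmfels (and Ziegler's Example 9.8) without reproducing a proof, so there is no in-paper argument to compare against; what you have written is a sound reconstruction of the standard proof via the slice decomposition, and the three steps (chamber formula, slice identity, vertex identification of the hypersimplex sum) all check out.

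Two small inaccuracies worth flagging. First, your framing of ``the main technical obstacle'' is misplaced: the direction $\b c = (1,\dots,1)$ \emph{is} generic for the cube under the paper's definition, since every edge of $\Cube$ points in a direction $\pm\b e_i$ and $\inner{\b c, \b e_i} = 1 \neq 0$. So there is no non-genericity to work around; the reason to invoke \cite[Theorem~1.5]{BilleraSturmfels-FiberPolytope} directly rather than \Cref{thm:MPPSumSectionAtVertices} is simply that the latter would sum the hypersimplex slices with multiplicities $\binom{n}{k}$ (one summand per cube vertex at level $k$), giving a polytope only normally equivalent --- not equal up to scaling --- to $\permuto$, whereas the chamber formula yields the clean unit-weight sum. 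Second, the chamber formula carries a normalization by $\tfrac{1}{\vol(\polytopeQ)} = \tfrac{1}{n}$, which you have silently dropped; since the theorem only asserts the result up to a dilation, this is harmless, but the displayed equality sign is stronger than what Theorem~1.5 gives. The telescoping, the characterization of the vertices of a half-integer cube slice, and the directional identification of $\sum_{k=1}^{n-1}\HypSimpl[n][k]$ with a translate of $\permuto$ are all correct.
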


These two results motivate the study of the monotone path polytopes of hypersimplices.
Indeed, hypersimplices are a generalization of simplices, and arise as sections of the standard cube.

\begin{definition}
For $n\geq 2$, $k\in [n]$, the \defn{$(n,k)$-hypersimplex} is $\HypSimpl = \bigl\{\b x\in[0,1]^n ~;~ \sum_i x_i = k\bigr\}$.
It is the section of the standard cube $\Cube = [0,1]^n$ by the hyperplane $\left\{\b x\in \R^n ~;~\inner{\b x, (1, \dots, 1)} = k\right\}$.

The vertices of $\HypSimpl$ are exactly its $(0,1)$-coordinate elements: the $(0,1)$-vectors with $k$ ones and $n-k$ zeros.
We denote the \defn{support} of a vertex $\b v\in V(\HypSimpl)$ by $\supp := \{i ~;~ v_i = 1\}$.
Two vertices $\b u,\b v\in V(\HypSimpl)$ share an edge when $|\supp[\b u]\cap\supp| = k-1$, \ie to obtain $\b v$ from $\b u$, flip a zero to a one and a one to a zero.
\end{definition}

Note that the hypersimplices $\HypSimpl[n][1]$ and $\HypSimpl[n][n-1]$ are simplices: in this sense, hypersimplices are a generalization of simplices.
As sections of the standard cube, they also generalize the cube.

We consider the linear program $(\HypSimpl,\b c)$ where $\b c\in \R^n$ is generic with respect to $\HypSimpl$.
A direction $\b c$ is generic for the hypersimplex when $c_i\ne c_j$ for all $i\ne j$, as each edge has direction $\b e_i - \b e_j$.
Without loss of generality, as the hypersimplex is invariant under reordering coordinates, we suppose $c_1 < c_2 < \dots < c_n$.
Note however that there can exist $\b v,\b w\in V(\HypSimpl)$ with $\inner{\b v,\b c} = \inner{\b w, \b c}$ (when $\b v$ and $\b w$ are not adjacent vertices).
For drawings, we take $\b c = (1,2,\dots,n)$.
The vector $\b c\in\R^n$ will be fixed for the rest of this paper (apart from \Cref{ssec:NonGenericDirection}).
See \Cref{fig:MPP42} (Left) for an example.
We let \defn{$\MPPHypSimpl := \MPP[\HypSimpl][\b c]$} to ease notations.

We let $\vmin = (1,\dots,1,0,\dots,0)\in V(\HypSimpl)$ be the vertex of $\HypSimpl$ minimizing $\inner{\b v,\b c}$ for $\b v\in V(\HypSimpl)$,
and $\vmax = (0,\dots,0,1,\dots,1)\in V(\HypSimpl)$  the vertex of $\HypSimpl$ maximizing $\inner{\b v,\b c}$ for $\b v\in V(\HypSimpl)$.
We now interpret the conditions for being a coherent monotone path on a polytope for the case of the hypersimplex $\HypSimpl$.

\begin{definition}
A \defn{monotone path of vertices} $P = (\b v_1,...,\b v_r)$ on $\HypSimpl$ is an ordered list of vertices of $\HypSimpl$ with $\b v_1 = \vmin $, $\b v_r = \vmax$, and for all\footnote{We denote $[k] := \{1, 2, \dots, k\}$.} $i\in [r-1]$, $(\b v_i, \b v_{i+1})$ is an \defn{improving edge} of $\HypSimpl$ for $\b c$, \ie an edge of $\HypSimpl$ with $\inner{\b v_i, \b c} < \inner{\b v_{i+1}, \b c}$.
The \defn{length} of $P$ is $r$.

For all $i\in [r-1]$, the vertices $\b v_i$ and $\b v_{i+1}$ share an edge of $\HypSimpl$. Thus, instead of considering $P$ as a list of vertices, we emphasize what changes and what remains between $\b v_i$ and $\b v_{i+1}$ by storing the enhanced steps of $P$.
The \defn{$i$-th enhanced step} of $P$ is denoted $\enhancedStepZ$ with: 
\begin{compactenum}[$\bullet$]
\item $Z$ the common support, \ie $Z = \supp[\b v_i]\cap\supp[\b v_{i+1}]$.
\item $x$ the only index in the support of $\b v_i$ that is not in the support of $\b v_{i+1}$, \ie $\{x\} = \supp[\b v_i]\ssm\supp[\b v_{i+1}]$.
\item $y$ the only index in the support of $\b v_{i+1}$ that is not in the support of $\b v_i$, \ie $\{y\} = \supp[\b v_{i+1}]\ssm\supp[\b v_i]$.
\end{compactenum}

The list of enhanced steps of $P$ is denoted by $\c S(P)$.
The map $P \mapsto \c S(P)$ is obviously injective.
When $\enhancedStep[a][b][C]$ is the $i$-th enhanced step, and $\enhancedStepZ$ the $j$-th one, with $i < j$, we write $\enhancedStep[a][b][C] \prec \enhancedStepZ$, and say that $\enhancedStep[a][b][C]$ \defn{precedes} $\enhancedStepZ$.
Note that knowing all the steps $a\to b$, without their enhancement allows to retrieve the path and hence the enhancement.

A monotone path is called \defn{coherent} when it corresponds to a vertex of $\MPPHypSimpl$.
\end{definition}

\newpage \begin{proposition}\label{prop:CaptureCriterion}
For $\b\omega\in \R^n$, let $\pi^{\b\omega} : \R^n \to \R^2$ be the projection $\pi^{\b\omega}(\b x) = \bigl(\inner{\b x, \b c}, ~\inner{\b x, \b \omega}\bigr)$.
In particular, if $\b v\in V(\HypSimpl)$ is a vertex, then $\pi^{\b\omega}(\b v) = \bigl(\sum_{i\in \supp[\b v]} c_i,~\sum_{i\in \supp[\b v]} \omega_i\bigr)$.

A monotone path of vertices $P = (\b v_1, \dots, \b v_r)$ is coherent if and only if there exists $\b\omega\in\R^n$ such that for all $1\leq i\leq r-1$:
$$\text{for all } J\in\binom{[n]}{k},~~~~~~~~~ \sum_{j\in J} c_j > \sum_{p\in \supp[\b v_i]} c_p ~\Longrightarrow~ \slope\bigl(\supp[\b v_i],\, J\bigr) < \slope\bigl(\supp[\b v_i],\, \supp[\b v_{i+1}]\bigr)$$
where $\slope\bigl(I, J\bigr) = \frac{\sum_{j\in J} \omega_j - \sum_{i\in I} \omega_i}{\sum_{j\in J} c_j - \sum_{i\in I} c_i}$ is the slope between the point $\bigl(\sum_{i\in I} c_i,  ~\sum_{i\in I} \omega_i\bigr)$ and the point $\bigl(\sum_{i\in J} c_j,  ~\sum_{j\in J} \omega_j\bigr)$, see \Cref{fig:ProjectionExample} (Left).
We say that such $\b\omega$ \defn{captures} $P$.
\end{proposition}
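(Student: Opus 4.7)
The plan is to directly unpack \Cref{def:CoherentCellularString} for the special case of the hypersimplex and the \emph{finest} cellular strings (i.e., monotone paths). Coherence of $P = (\b v_1, \dots, \b v_r)$ means that for some $\b\omega$ each pair $(\b v_i, \b v_{i+1})$ of consecutive vertices projects to a genuine upper edge of the planar polygon $(\HypSimpl)_{\b c, \b\omega} = \pi^{\b\omega}(\HypSimpl)$. My task is to rephrase ``upper edge'' in terms of the slopes of the proposition.

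The key step is a purely planar observation. Write $p = \pi^{\b\omega}(\b v_i)$ and $q = \pi^{\b\omega}(\b v_{i+1})$, so $p_x < q_x$ since the edge is improving. Then $pq$ is an upper edge of the polygon iff no projected vertex of $\HypSimpl$ lies strictly above the line through $p$ and $q$, and moreover $q$ is the only projected vertex on this line with $x$-coordinate $\geq q_x$. Since projected vertices are exactly the points $(\sum_{j\in J} c_j, \sum_{j\in J} \omega_j)$ for $J\in \binom{[n]}{k}$, a direct computation shows that such a vertex with $\sum_{j\in J} c_j > p_x$ lies on or above the line through $p$ of slope $\slope(\supp[\b v_i], \supp[\b v_{i+1}])$ iff $\slope(\supp[\b v_i], J) \geq \slope(\supp[\b v_i], \supp[\b v_{i+1}])$. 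The strict inequality of the proposition (implicitly for $J \neq \supp[\b v_{i+1}]$) thus encodes exactly the upper-edge condition restricted to vertices to the right of $p$, and simultaneously the uniqueness that ensures the upper edge ends precisely at $q$.

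Both implications then follow. If $P$ is coherent, any $\b\omega$ capturing it in the sense of \Cref{def:CoherentCellularString} must satisfy all the slope inequalities by the characterization above. Conversely, given $\b\omega$ satisfying the slope inequalities, one shows by induction on $i$ that $\pi^{\b\omega}(\b v_i)$ lies on the upper path. The base case uses that $\b v_1 = \vmin$ is, by the genericity assumption $c_1 < \dots < c_n$, the unique leftmost projected vertex; the inductive step uses the slope inequality at $i$ to identify $\b v_{i+1}$ as the next upper-path vertex, and after $r-1$ steps we reach $\b v_r = \vmax$, the unique rightmost projected vertex, so $P$ is the whole upper path.

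The main subtlety to address is that the proposition's condition only constrains $J$ with $\sum_{j\in J} c_j > \sum_{p \in \supp[\b v_i]} c_p$; one must also preclude the possibility that some vertex $\b u \in V(\HypSimpl)$ with $\inner{\b u, \b c} = \inner{\b v_i, \b c}$ satisfies $\inner{\b u, \b\omega} > \inner{\b v_i, \b\omega}$, which would place $\pi^{\b\omega}(\b u)$ directly above $\pi^{\b\omega}(\b v_i)$ and prevent $\b v_i$ from lying on the upper path. This is handled inductively: such a $\b u$ has the same $x$-coordinate as $\pi^{\b\omega}(\b v_i)$ but lies strictly above it, hence its slope from $\b v_{i-1}$ strictly exceeds that of $\b v_i$, contradicting the slope inequality at step $i-1$. (The case $i=1$ is trivial since $\vmin$ is strictly leftmost.) This resolves the last obstacle, completing the argument.
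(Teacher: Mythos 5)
Your proof is correct and follows essentially the same route as the paper: reduce coherence to the condition that $\pi^{\b\omega}(P)$ is the upper path of the planar polygon via \Cref{thm:FaceLatticeMPP}, then translate the upper-edge condition into slope maximization among all vertices with strictly larger $\b c$-value (using that the slope-maximizer is automatically an improving neighbor). You are somewhat more explicit than the paper about two points: the inductive argument establishing sufficiency (starting from the uniquely leftmost $\vmin$), and the tie subtlety for vertices $\b u$ with $\inner{\b u, \b c} = \inner{\b v_i, \b c}$ — the paper handles both implicitly through the ``next upper-path vertex'' framing, while you spell out that such a $\b u$ above $\pi^{\b\omega}(\b v_i)$ would already violate the slope inequality at step $i-1$. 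These are welcome clarifications of the same argument, not a genuinely different one.
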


\begin{proof}
By \Cref{thm:FaceLatticeMPP}, a monotone path of vertices $P$ is coherent if and only if there exists $\b\omega\in\R^n$ such that the upper path of the polygon $\pi^{\b\omega}(\HypSimpl)$ is precisely
$\pi^{\b\omega}(P) := \bigl(\pi^{\b\omega}(\b v)\bigr)_{\b v \in P}$ (remember we take the upper faces instead of the lower faces by convention in this article).

If $\b v\in V(\HypSimpl)$ is in the upper path of $\pi^{\b \omega}(\HypSimpl)$, then the next vertex in the upper path is the improving neighbor $\b v'$ of $\b v$ that maximizes the slope $\frac{\inner{\b v' - \b v,\b \omega}}{\inner{\b v' - \b v,\b c}}$.
As $\b c$ is generic for $\HypSimpl$, the vertex $\b u\in V(\HypSimpl)$ maximizing this slope is necessarily an improving neighbor of $\b v$, as the pre-image of edge $\bigl[\pi^{\b\omega}(\b v), \pi^{\b\omega}(\b u)\bigr]$ in the polygon $\pi^{\b\omega}(\HypSimpl)$ is an (improving) edge in $\HypSimpl$.
Consequently, the condition stated in the proposition is both necessary and sufficient.
\end{proof}

\begin{figure}
    \centering
    \includegraphics[width=0.98\linewidth]{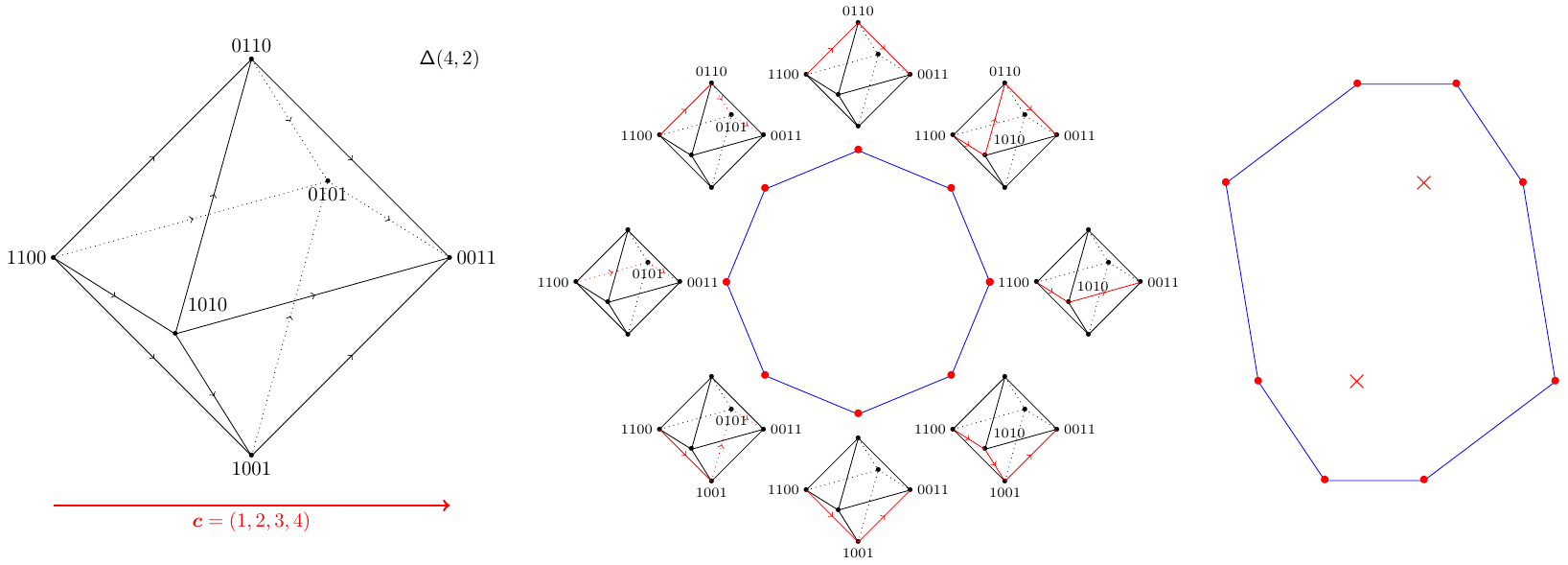}
    \caption[Monotone path polytope of the hypersimplex $\polytope{\Delta}(4,2)$]{(Left) The $(4,2)$-hypersimplex lives in the hyperplane $\{\b x ~;~ \sum_{i=1}^4 x_i = 2\}$ inside $\R^4$.
    (Middle) The monotone path polytope $\MPPHypSimpl[4][2]$ is an octagon,
    each vertex of which is labeled by the corresponding (coherent) monotone path, drawn on $\HypSimplTwo[4]$.
    (Right) Actually, $\MPPHypSimplTwo[4]$ is not a \textit{regular} octagon, but the octagon depicted here, the two crosses correspond to the two monotone paths on $\HypSimplTwo[4]$ which are not coherent.}
    \label{fig:MPP42}
\end{figure}

\begin{example}\label{exmp:Coherent32}
The hypersimplex $\HypSimplTwo[3]$ is a triangle, \ie a simplex of dimension $2$.
By Billera--Sturmfels' \Cref{thm:BS92-Simplex}, for any $\b c\in\R^3$, its monotone path polytope is a cube of dimension~$1$:
it has $2$ vertices, one corresponding to the path of length $3$, and the other one corresponding to the path of length~$2$.
\end{example}

\begin{example}\label{exmp:Coherent42}
On the hypersimplex $\HypSimplTwo[4]$, for $\b c = (1,2,3,4)$ there are 8 coherent monotone paths, and 2 non-coherent monotone paths.
The 8 coherent monotone paths correspond to the vertices of the octagon $\MPPHypSimpl[4][2]$ depicted in \Cref{fig:MPP42} (Right).
There are 4 coherent monotone paths of length 3, and 4 coherent monotone paths of length 4.
On the other side, the 2 non-coherent monotone paths are of length 5:
$(1100, 1010, \mathbf{0110}, 0101, 0011)$ and $(1100, 1010, \mathbf{1001}, 0101, 0011)$, in bold are the vertices that differ between the two paths.

With a quick jotting, one can prove that for all $\b c\in \R^4$, the same holds: for all $\b c\in\R^4$, the coherent monotone paths are exactly the same.
This can also be retrieved from \cite[Theorem~3.2]{BlackDeLoera2021monotone} as $\HypSimplTwo[4]$ is the cross-polytope of dimension 3.
\end{example}

\begin{figure}[t]
    \centering
    \includegraphics[width=0.99\linewidth]{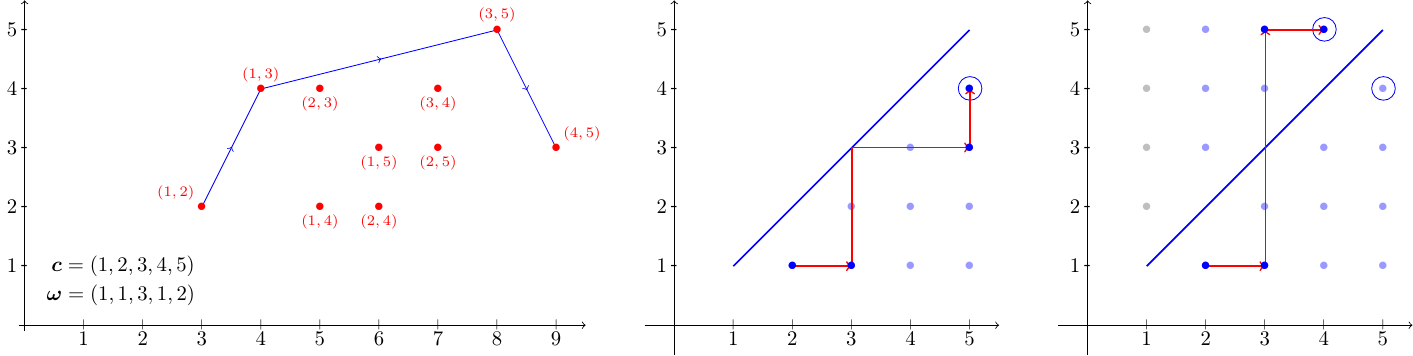}
    \caption[Example of a monotone path and its diagonal-avoiding lattice path]{(Left) For the given $\b c$ and $\b \omega$, the hypersimplex $\HypSimplTwo[5]$ is projected onto the 10 points drawn, where each vertex of $\HypSimplTwo[5]$ is indicated by its support.
    The coherent path $P$ captured is drawn in blue.
    (Middle and Right) $P$ corresponds to the diagonal-avoiding path depicted on the right, while associating $P$ to lattice points $(x,y)$ with $x < y$ give the middle figure.}
    \label{fig:ProjectionExample}
\end{figure}

\begin{example}
To be able to draw the monotone path polytope $\MPPHypSimpl$ of the hypersimplex $\HypSimpl$, we need that $\dim \HypSimpl \leq 4$, so that $\dim \MPPHypSimpl \leq 3$.
This forces $n \leq 5$.
Moreover, remember that $\HypSimpl$ is linearly isomorphic to $\HypSimpl[n][n-k]$.
For $n = 3$, \Cref{exmp:Coherent32} deals with $k = 2$ (and thus $k = 1$ by symmetry).
For $n = 4$, \Cref{exmp:Coherent42} deals with $k = 2$, while $\HypSimpl[4]$ with $k = 1$ and $k = 3$ are simplices and their monotone path polytopes are squares.
For $n = 5$, $\HypSimpl[5]$ with $k = 1$ and $k = 4$ are simplices and their monotone path polytopes are cubes, while $k = 2$ and $k = 3$ are equivalent and their monotone path polytope is depicted in \Cref{fig:M(52)}: it has 33 vertices, 52 edges, and 21 faces (5 octagons and  16 squares).
\end{example}

\begin{figure}[b]
    \centering
    \includegraphics[scale=1.33]{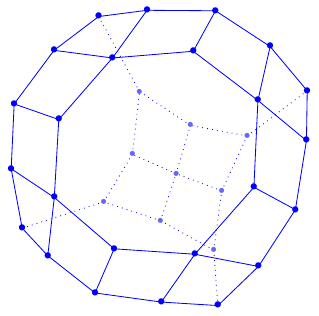}
    \caption[Monotone path polytope of the hypersimplex $\polytope{\Delta}(5,2)$]{The monotone path polytope $\MPPHypSimplTwo[5]$ of the hypersimplex $\HypSimplTwo[5]$ (not depicted with its true coordinates, but as a graph).
    This hypersimplex is linearly equivalent to $\HypSimpl[5][3]$.
    As $\HypSimplTwo[5]$ has 5 facets linearly equivalent to $\HypSimplTwo[4]$, its monotone path polytope $\MPPHypSimplTwo[5]$ has 5 facets which are isomorphic to $\MPPHypSimplTwo[4]$ \ie octagons.}
    \label{fig:M(52)}
\end{figure}

\section{A necessary criterion for coherent paths on $\HypSimpl$}\label{ssec:NecessaryCriterionHypSimpl}

\subsection{Enhanced steps criterion}\label{ssec:EnhancedStepCriterion}

For a given $\b\omega$, one can efficiently compute the monotone path that $\b\omega$ captures, using \Cref{prop:CaptureCriterion}.
However, we want to answer is the converse question: how to characterize the set of all coherent paths on the hypersimplex?
We show a necessary criterion for a monotone path to be coherent, and then, we prove that this criterion is sufficient in the case of the $(n,2)$-hypersimplices (but not in general).

\begin{theorem}\label{thm:EnhancedStepsCriterion}
If a path $P$ is coherent, then for all couples of enhanced steps $\enhancedStep[i][j][A] \prec \enhancedStepZ$ with $x < j$, one has $j \in Z$ or $x \in A$.
\end{theorem}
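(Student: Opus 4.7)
The plan is to argue by contradiction. Suppose $P=(\b v_1,\dots,\b v_r)$ is coherent, captured by some $\b\omega\in\R^n$, yet there is a pair of enhanced steps $\enhancedStep[i][j][A]\prec\enhancedStep[x][y][Z]$ of $P$ with $x<j$, $j\notin Z$, and $x\notin A$. Write $s<t$ for the corresponding step indices, so $\supp[\b v_s]=A\cup\{i\}$, $\supp[\b v_{s+1}]=A\cup\{j\}$, $\supp[\b v_t]=Z\cup\{x\}$, $\supp[\b v_{t+1}]=Z\cup\{y\}$. First I will dispose of the degenerate case $t=s+1$: there $\b v_{s+1}=\b v_t$ forces $A\cup\{j\}=Z\cup\{x\}$, and since $j\notin Z$ this gives $j=x$, contradicting $x<j$. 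So from now on I will assume $t>s+1$. My strategy is to exhibit two auxiliary vertices of $\HypSimpl$ whose images under the projection $\pi^{\b\omega}(\b v)\eqdef\bigl(\inner{\b v,\b c},\inner{\b v,\b\omega}\bigr)$ contradict the strict concavity of the upper hull of $\pi^{\b\omega}(\HypSimpl)$, which $P$ traces by \Cref{prop:CaptureCriterion}.

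Set $\alpha(S)\eqdef\sum_{s\in S}c_s$ and $\beta(S)\eqdef\sum_{s\in S}\omega_s$ for $S\subseteq[n]$. Let $\b w$ and $\b w'$ be the vertices of $\HypSimpl$ with supports $Z\cup\{j\}$ and $A\cup\{x\}$ respectively; these are valid $k$-subsets because $j\notin Z$, $j\neq x$, and $x\notin A$ (the case $x=i$ merely gives $\b w'=\b v_s$, which is harmless below). Expanding both sides by linearity, I verify the key identity
$$\pi^{\b\omega}(\b w)+\pi^{\b\omega}(\b w')=\pi^{\b\omega}(\b v_{s+1})+\pi^{\b\omega}(\b v_t),$$
both equalling $\bigl(\alpha(A)+\alpha(Z)+c_x+c_j,\;\beta(A)+\beta(Z)+\omega_x+\omega_j\bigr)$. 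Comparing first coordinates, using $x<j$ (outer strict inequalities) and $t>s+1$ (middle one), gives
$$\alpha(\b w')=\alpha(A)+c_x<\alpha(A)+c_j=\alpha(\b v_{s+1})<\alpha(\b v_t)=\alpha(Z)+c_x<\alpha(Z)+c_j=\alpha(\b w),$$
so the chord $[\b w',\b w]$ is strictly wider in the $\alpha$-direction than $[\b v_{s+1},\b v_t]$, but shares the same midpoint with it.

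To conclude, let $\phi$ denote the upper hull of $\pi^{\b\omega}(\HypSimpl)$ viewed as a function of the first coordinate. By \Cref{prop:CaptureCriterion}, $P$ traces $\phi$, which is therefore piecewise linear with strictly decreasing slopes at each break $\alpha(\b v_k)$. Since $\alpha(\b v_{s+1})$ and $\alpha(\b v_t)$ are such breaks sitting strictly inside $(\alpha(\b w'),\alpha(\b w))$, the function $\phi$ is not linear on this interval; the midpoint-preserving inward comparison for concave functions (if $\phi$ is concave and $\alpha_1<\alpha_1'\leq\alpha_2'<\alpha_2$ share a midpoint then $\phi(\alpha_1')+\phi(\alpha_2')\geq\phi(\alpha_1)+\phi(\alpha_2)$, with strict inequality whenever $\phi$ is non-linear on $[\alpha_1,\alpha_2]$) then yields the strict
$$\phi(\alpha(\b v_{s+1}))+\phi(\alpha(\b v_t))>\phi(\alpha(\b w'))+\phi(\alpha(\b w)).$$
Since $\b v_{s+1}$ and $\b v_t$ lie on the hull, the left-hand side is $\beta(\b v_{s+1})+\beta(\b v_t)$; since $\b w,\b w'$ are vertices of $\HypSimpl$, their images lie at or below $\phi$, so the right-hand side is $\geq\beta(\b w')+\beta(\b w)$. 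Combining gives $\beta(\b v_{s+1})+\beta(\b v_t)>\beta(\b w')+\beta(\b w)$, contradicting the midpoint identity. The subtle step is precisely this strict-concavity inequality, which is where the genericity of $\b\omega$ and the assumption $t>s+1$ both enter; once they are in place, no case distinction on the relative position of $x$ and $i$ is needed — the argument is uniform in $x<i$, $x=i$, and $x>i$.
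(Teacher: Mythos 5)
Your proof is correct, and it is built around the same two auxiliary vertices that the paper introduces (your $\b w'$, $\b w$ with supports $A\cup\{x\}$, $Z\cup\{j\}$ are the paper's $\b u_1$, $\b u_2$), and around the same structural identity---your midpoint identity $\pi^{\b\omega}(\b w)+\pi^{\b\omega}(\b w')=\pi^{\b\omega}(\b v_{s+1})+\pi^{\b\omega}(\b v_t)$ is exactly the paper's observation that $\slope(\b u_1,\b v_2)=\frac{\omega_j-\omega_x}{c_j-c_x}=\slope(\b v_3,\b u_2)$. Where you diverge is the concluding mechanism: the paper chains slope comparisons via \Cref{prop:CaptureCriterion} and \Cref{lem:ConvexityLemma} ($\slope(\b u_1,\b v_2)>\slope(\b v_1,\b v_2)>\slope(\b v_3,\b v_4)>\slope(\b v_3,\b u_2)$, contradicting the slope equality), whereas you view the upper boundary of $\pi^{\b\omega}(\HypSimpl)$ as a piecewise-linear concave function $\phi$ and apply the midpoint-preserving majorization inequality, using $\b v_{s+1}$, $\b v_t$ as interior breakpoints to force strictness. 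Your packaging buys genuine uniformity: the paper's first slope comparison $\slope(\b v_1,\b u_1)<\slope(\b v_1,\b v_2)$ invokes \Cref{prop:CaptureCriterion} with $J=A\cup\{x\}$, which requires $c_x>c_i$, so the paper's write-up implicitly assumes $x>i$ (and the case $x=i$ makes $\b u_1=\b v_1$ and the slope $\slope(\b v_1,\b u_1)$ undefined); your argument needs no case split on the relative order of $x$ and $i$, as you point out. One tiny stylistic note: disposing of $t=s+1$ upfront is clean but not strictly necessary, since even with $\alpha(\b v_{s+1})=\alpha(\b v_t)$ there is still an interior breakpoint in $(\alpha(\b w'),\alpha(\b w))$, so the strict majorization inequality goes through.
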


Before proving this criterion, we introduce a simple but powerful lemma.
It states that there exist only two kinds of triangles in the plane: upwards pointing ones $\triangle$ and downwards pointing ones $\nabla$.
This lemma is easy and well-known, but we state it here to make the section self-contained.

\begin{lemma}\label{lem:ConvexityLemma}
For three points in the plane $(x_1,y_1)$, $(x_2,y_2)$ and $(x_3,y_3)$ with $x_1 < x_2 < x_3$, 
denote the slopes by $\tau(1,2) = \frac{y_2-y_1}{x_2-x_1}$, $\tau(2,3) = \frac{y_3-y_2}{x_3-x_2}$ and $\tau(1,3) = \frac{y_3-y_1}{x_3-x_1}$.
Then $\tau(1,3)$ is a convex combination of the slopes $\tau(1,2)$ and $\tau(2,3)$.
In particular, if $\tau(1,2) < \tau(1,3)$, then $\tau(1,3) < \tau(2,3)$ (and conversely if $\tau(1,2) > \tau(1,3)$, then $\tau(1,3) > \tau(2,3)$).
\end{lemma}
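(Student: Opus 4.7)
My plan is to reduce the convex combination claim to a single telescoping identity, and then extract the ordering consequence as a direct corollary.

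First, I would telescope the numerator $y_3 - y_1 = (y_2 - y_1) + (y_3 - y_2)$ and split the denominator $x_3 - x_1 = (x_2 - x_1) + (x_3 - x_2)$ accordingly, which yields
$$\tau(1,3) \;=\; \frac{y_3 - y_1}{x_3 - x_1} \;=\; \frac{x_2 - x_1}{x_3 - x_1}\cdot \frac{y_2 - y_1}{x_2 - x_1} \;+\; \frac{x_3 - x_2}{x_3 - x_1}\cdot \frac{y_3 - y_2}{x_3 - x_2} \;=\; \lambda\, \tau(1,2) + (1-\lambda)\, \tau(2,3),$$
with $\lambda := \frac{x_2 - x_1}{x_3 - x_1}$. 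Since $x_1 < x_2 < x_3$, the coefficient $\lambda$ lies in $(0,1)$, so this is a genuine convex combination, proving the first assertion.

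For the second assertion, I would simply observe that any convex combination of two real numbers lies in their closed interval. Thus $\tau(1,3)$ sits between $\tau(1,2)$ and $\tau(2,3)$; the strict inequality $\tau(1,2) < \tau(1,3)$ then forces $\tau(1,2) < \tau(2,3)$, and since $\lambda \in (0,1)$ the combination must be strictly less than the larger endpoint, giving $\tau(1,3) < \tau(2,3)$. The reverse case $\tau(1,2) > \tau(1,3)$ is symmetric.

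There is no real obstacle here: the lemma is a one-line algebraic identity. The only point requiring care is that the coefficients $\lambda$ and $1-\lambda$ are \emph{strictly} positive, which relies on the strict ordering $x_1 < x_2 < x_3$, and is precisely what upgrades the ordering conclusions to strict inequalities.
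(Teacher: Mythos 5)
Your proof is correct and uses exactly the same convex-combination identity $\tau(1,3) = \frac{x_2-x_1}{x_3-x_1}\tau(1,2) + \frac{x_3-x_2}{x_3-x_1}\tau(2,3)$ that the paper states; you simply spell out the telescoping derivation and the strict-inequality bookkeeping that the paper leaves implicit. No issues.
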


\begin{proof}
One has the convex combination:
$\tau(1,3) = \frac{x_2-x_1}{x_3-x_1}\tau(1,2) + \frac{x_3-x_2}{x_3-x_1}\tau(2,3)$.
\end{proof}

\begin{proof}[Proof of \Cref{thm:EnhancedStepsCriterion}]
Suppose $\enhancedStep[i][j][A] \prec \enhancedStepZ\in \c S(P)$ with $x < j$ (so $c_x < c_j$), $j \notin Z$ and $x \notin A$.
Fix $\b \omega\in\R^n$ that captures $P$.
Then consider $\b v_1$, $\b v_2$ with $\supp[\b v_1] = A\cup\{i\}$, $\supp[\b v_2] = A\cup\{j\}$, and $\b v_3$, $\b v_4$ with $\supp[\b v_3] = Z\cup\{x\}$, $\supp[\b v_4] = Z\cup\{y\}$, see \Cref{fig:CoherenceCriterionProof}.
These are 4 vertices of $\HypSimpl$ in the path $P$.
Abusing notation, we write $\slope(\b u,\b v)$ instead of $\slope\bigl(\supp[\b u],\supp\bigr)$ in what follows.

As $x \notin A$, there exists $\b u_1\in V(\HypSimpl)$ with $\supp[\b u_1] = A\cup\{x\}$, thus $\b v_2$ is an improving neighbor of $\b u_1$.
As $j \notin Z$, there exists $\b u_2\in V(\HypSimpl)$ with $\supp[\b u_2] = Z\cup\{j\}$, thus $\b u_2$ is an improving neighbor of $\b v_3$.
First observe that, $\slope\bigl(\b v_1, \b u_1\bigr) < \slope\bigl(\b v_1, \b v_2\bigr)$ by \Cref{prop:CaptureCriterion},
thus $\slope\bigl(\b u_1, \b v_2\bigr) > \slope\bigl(\b v_1,\b v_2\bigr)$ by \Cref{lem:ConvexityLemma} applied in the triangle $\pi_{\b \omega}(\b v_1)$, $\pi_{\b \omega}(\b v_2)$, $\pi_{\b \omega}(\b u_1)$.
Moreover, $\slope\bigl(\b v_3, \b u_2\bigr) < \slope\bigl(\b v_3, \b v_4\bigr)$ by \Cref{prop:CaptureCriterion}.
As $\pi_{\b\omega}(P)$ is convex: $\slope\bigl(\b v_1, \b v_2\bigr) > \slope\bigl(\b v_3, \b v_4\bigr)$ because the second step comes later in the path.
But then: $\slope\bigl(\b u_1,\b v_2\bigr) < \slope\bigl(\b v_3,\b u_2\bigr)$, while in the meantime:
$\slope\bigl(\b u_1,\b v_2\bigr) = \frac{\omega_j-\omega_x}{c_j - c_x} = \slope\bigl(\b v_3,\b u_2\bigr)$.
This contradiction proves the theorem.
\end{proof}

\begin{figure}[b]
    \centering
    \begin{center}
\begin{tikzpicture}[scale=1.5,xscale=1.5,yscale=1.25]
\newdimen\RR
\RR = 1cm

\draw[->] (-2,-0.25) -- (2,-0.25);
\draw[->] (-1.75,-0.75) -- (-1.75,2);

\draw[blue] (0:\RR) \foreach \i in {1,...,6}{-- ({180*\i/6}:\RR)};
\draw[very thick, blue] (180/6:\RR) -- (180/3:\RR);
\draw[very thick, blue] (180*2/3:\RR) -- (180*5/6:\RR);

\draw[blue] (1.3,0.1) node{$\pi_{\b \omega}(P)$};

\draw[red] (180*5/6:1.05*\RR) node[anchor=east]{\begin{small}$\pi_{\b \omega}(\b v_1)$\end{small}};

\draw[red] (180*4/6:1*\RR) node[anchor=south east]{\begin{small}$\pi_{\b \omega}(\b v_2)$\end{small}};

\draw[red] (180*2/6:1*\RR) node[anchor=south west]{\begin{small}$\pi_{\b \omega}(\b v_3)$\end{small}};

\draw[red] (180*1/6:1.05*\RR) node[anchor=west]{\begin{small}$\pi_{\b \omega}(\b v_4)$\end{small}};

\draw[red] (180*5/6:0.8*\RR) node{\begin{tiny}$\bullet$\end{tiny}};
\draw[blue, dotted] (180*5/6:0.8*\RR) -- (180*4/6:\RR);
\draw[red] (180*5/6:0.82*\RR) node[anchor=north west]{\begin{small}$\pi_{\b \omega}(\b u_1)$\end{small}};

\draw[red] (180*1/6:0.8*\RR) node{\begin{tiny}$\bullet$\end{tiny}};
\draw[blue, dotted] (180*1/6:0.8*\RR) -- (180*2/6:\RR);
\draw[red] (180*1/6:0.82*\RR) node[anchor=north east]{\begin{small}$\pi_{\b \omega}(\b u_2)$\end{small}};

\foreach \i in {0,...,6}{\draw[red] ({180*\i/6}:\RR) node{\begin{tiny}$\bullet$\end{tiny}};}

\draw (-1.7,1.9) node[anchor=west]{\begin{small}$\supp[\b v_1] = A\cup\{i\}$\end{small}};
\draw (-1.7,1.65) node[anchor=west]{\begin{small}$\supp[\b v_2] = A\cup\{j\}$\end{small}};
\draw (0.9,1.9) node[anchor=west]{\begin{small}$\supp[\b v_3] = Z\cup\{x\}$\end{small}};
\draw (0.9,1.65) node[anchor=west]{\begin{small}$\supp[\b v_4] = Z\cup\{y\}$\end{small}};
\draw (-1.7,1.4) node[anchor=west]{\begin{small}$\supp[\b u_1] = A\cup\{x\}$\end{small}};
\draw (0.9,1.4) node[anchor=west]{\begin{small}$\supp[\b u_2] = Z\cup\{j\}$\end{small}};
\end{tikzpicture}
\end{center}
    \caption[Illustration for the proof of \Cref{thm:EnhancedStepsCriterion}]{Illustration for the proof of \Cref{thm:EnhancedStepsCriterion}: Dotted slopes shall be both equal to $\frac{\omega_j-\omega_x}{c_j-c_x}$. It is impossible if $\pi_{\b \omega}(\b u_1)$ and $\pi_{\b \omega}(\b u_2)$ are below $\pi_{\b \omega}(P)$: either $\b u_1$ or $\b u_2$ is not a vertex of $\HypSimpl$.}
    \label{fig:CoherenceCriterionProof}
\end{figure}

\subsection{Comparison with coherent paths for a non-generic direction}\label{ssec:NonGenericDirection}

Before exploring the sufficiency of the enhanced steps criterion, we want to connect our work to the pre-existing result on the coherent monotone paths on $\HypSimpl$ for non-generic directions.
In \cite[Section 8]{BlackSanyal-FlagPolymatroids}, the authors study coherent monotone paths on matroid base polytopes $\polytope{B}_M$ for a matroid $M$ on $n$ elements, for the non-generic direction $\b e_S := \sum_{i\in S} \b e_i$ for $S\subseteq [n]$.
They show:

\begin{theorem}\emph{(\cite[Theorem 8.1]{BlackSanyal-FlagPolymatroids}.)}
For $S\subseteq[n]$, all $\b e_S$-monotone paths on $\polytope{B}_M$ are coherent.
\end{theorem}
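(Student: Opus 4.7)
The plan is to construct, for any $\b e_S$-monotone cellular string $\sigma = (F_1, \dots, F_r)$ on $\polytope{B}_M$, an explicit direction $\b \omega \in \R^n$ that captures $\sigma$ in the sense of \Cref{def:CoherentCellularString}. First I would unpack the setup: since $\inner{\b e_S, \b e_B} = |S \cap B| \in \Z$ for every basis $B$, the projection $\pi_{\b e_S}(\polytope{B}_M)$ is a segment with integer endpoints $k_{\min}, k_{\max}$, and each face $F_i$ in $\sigma$ projects onto an integer subinterval $[k_i, k_{i+1}]$ with $k_{\min} = k_1 < k_2 < \dots < k_{r+1} = k_{\max}$. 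For each integer $k$ in this range, denote the fiber $\Phi_k := \polytope{B}_M \cap \{\inner{\b e_S, \b x} = k\}$; the chaining condition $\max F_i = \min F_{i+1}$ says that the common face $F_i \cap F_{i+1}$ lies in $\Phi_{k_{i+1}}$.

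Next, since each $F_i$ is a face of $\polytope{B}_M$, there exists a linear functional $\b \omega_i \in \R^n$ whose maximizer on $\polytope{B}_M$ is precisely $F_i$. I would assemble a candidate capturing direction as $\b \omega := \sum_{i=1}^r \alpha_i \b \omega_i$ for positive scalars $\alpha_i$ to be chosen. The requirement on $\b \omega$ is that, defining $M(k) := \max_{\b x \in \Phi_k} \inner{\b \omega, \b x}$, the maximizer should lie in $F_i$ for every integer $k \in [k_i, k_{i+1}]$, and the function $k \mapsto M(k)$ should be strictly concave on the integer grid so that its breakpoints coincide with the $k_{i+1}$'s. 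These translate into finitely many linear inequalities on the $\alpha_i$, and I would argue feasibility by a hierarchical scale-separation scheme making each $\b \omega_i$ ``act locally'' on the levels $[k_i, k_{i+1}]$ while being dominated elsewhere, up to a small generic perturbation to break ties.

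The main obstacle is verifying the strict concavity of $M(k)$: one must rule out any ``shortcut'' in which the upper envelope of $\polytopeP_{\b e_S, \b \omega}$ skips past an intermediate level $k_{i+1}$ by going directly from $F_{i-1}$ to $F_{i+1}$, thereby collapsing two faces of $\sigma$ into one. The continuity of $M$ at each breakpoint is automatic from the matching face $F_i \cap F_{i+1} \subseteq \Phi_{k_{i+1}}$ on which both $\b \omega_i$ and $\b \omega_{i+1}$ achieve their fiberwise maxima; the strict slope decrease is precisely what requires the matroidal structure, since every intersection of faces of $\polytope{B}_M$ is again a matroid polytope (hence well behaved inside $\Phi_k$) and so the slopes $\bigl(M(k_{i+1}) - M(k_i)\bigr)/(k_{i+1}-k_i)$ can be tuned freely by varying $\alpha_i$ without destroying the dominance structure. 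A careful execution of this scale-separation, plus a final generic perturbation, produces a $\b \omega$ capturing $\sigma$, concluding coherence via \Cref{def:CoherentCellularString}.
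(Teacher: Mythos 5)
This statement is not proved in the paper at all: it is quoted verbatim as \cite[Theorem 8.1]{BlackSanyal-FlagPolymatroids} and simply cited. So there is no in-paper proof to compare against, and your proposal has to be judged as a free-standing argument.

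On those terms, your plan correctly reduces coherence to: find one $\b\omega$ that, on each fiber $\Phi_k$, selects the path's vertex $\b v_{i(k)}$ as the unique maximizer, with the upper envelope $k\mapsto M(k)$ having a genuine breakpoint at every interior level. But the crucial step --- feasibility of the resulting system in the $\alpha_i$ via ``hierarchical scale-separation'' --- is exactly where the proof should live, and the mechanism you sketch does not work as stated. If $\alpha_1\gg\alpha_2\gg\dots$, then the maximizer of $\b\omega=\sum_i\alpha_i\b\omega_i$ on $\Phi_{k_j}$ is the \emph{lexicographic} maximizer: first over $\b\omega_1$, then $\b\omega_2$, and so on. This works automatically on $\Phi_{k_1}$ and $\Phi_{k_2}$ (since $F_1$ meets those fibers), but for $j>2$ the term $\alpha_1\b\omega_1$ alone already determines a set of candidate maximizers in $\Phi_{k_j}$, and there is no reason offered why $\b v_j$ must belong to that set --- $F_1\cap\Phi_{k_j}=\emptyset$, so the global optimum of $\b\omega_1$ says nothing \emph{a priori} about who wins inside $\Phi_{k_j}$. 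In short, a single linear functional has no notion of ``acting locally on a band of fibers and being dominated elsewhere'': scales act globally, and you must actually verify that the lexicographic maximum tracks the path through every fiber. That verification is the theorem.

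Relatedly, the place where the matroid structure enters is vague to the point of not being an argument. The remark that intersections of faces of $\polytope{B}_M$ are matroid polytopes, hence ``slopes can be tuned freely without destroying the dominance structure,'' is an assertion of exactly the thing that needs to be proved. As your sketch stands, it would equally apply to \emph{any} polytope with \emph{any} integer-valued direction, which would prove far too much. For matroid polytopes and $\b e_S$-directions, the key extra input is the very rigid combinatorics of the steps $\b v_{i+1}=\b v_i-\b e_{a_i}+\b e_{b_i}$ with all $a_i\notin S$, $b_i\in S$ distinct (spelled out in the paper's \Cref{ssec:NonGenericDirection}); this structure is what lets one write an explicit capturing $\b\omega$ (or, in your framework, prove the feasibility of the inequalities on the $\alpha_i$). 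Without invoking that exchange structure concretely, the proof has a genuine gap at its center.
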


If $M = U(n, k)$ is the uniform matroid of rank $k$ on $n$ elements, then $\polytope{B}_M = \HypSimpl$.

Up to symmetry, it is enough to consider $S = [n-s+1, n]$.
Clearly, $\b e_S$ is not a generic direction for $\HypSimpl$:
one would like that a coherent path $P$ of $(\HypSimpl, \b e_S)$ is a sub-path of a coherent path $P'$ of $(\HypSimpl, \b c)$ for a generic~$\b c$.
We show it holds.
An $\b e_S$-monotone path on $\HypSimpl$ is a sequence of bases $\c B_1, \dots, \c B_{k+1} \in \binom{[n]}{k}$, where $\c B_{i+1} = \bigl(\c B_i \ssm\{a_i\}\bigr) \cup\{b_i\}$ with $a_i\notin S$, $b_i\in S$, and $a_1, \dots, a_k$, $b_1, \dots, b_k$ distinct, where $\c B_1$ minimizes $|\c B\cap S|$ among all $\c B \in \binom{[n]}{k}$.
For us, this means that the enhanced steps of the path $P$ are $\enhancedStep[a_i][b_i][\c B_i \ssm\{a_i\}]$.
This respects the criterion of \Cref{thm:EnhancedStepsCriterion}: if $\enhancedStep[a_i][b_i][\c B_i \ssm\{a_i\}] \prec \enhancedStep[a_j][b_j][\c B_j \ssm\{a_j\}]$ then $i < j$, ensuring $b_i \in \c B_j \ssm\{a_j\}$ and $a_j \in \c B_i \ssm\{a_i\}$.

Hence, if $P$ is a coherent path for $\b e_S$, then it respects the enhanced steps criterion for a generic $\b c$.
However, this does not yet prove that there exists a path $P'$, coherent for some generic $\b c$, such that $P$ is a sub-path of $P'$, because $P$ does not necessarily start at the minimal vertex of $\HypSimpl$ (nor end at a maximal vertex), but at any vertex minimizing $\b e_S$.
To construct such $P'$ we define the part that goes from the minimal vertex $\b v_{\min} = \sum_{q\in [k]} \b e_q$ to the starting vertex $\sum_{q\in \c B_1} \b e_q$, and symmetrically for the end of the path.
In order to satisfy the enhanced steps criterion, these new steps are added in a certain order.
The technical definition thereafter guaranties such an ordering:

\begin{definition}
For $S = [n-s+1, n]$, and a (coherent) $\b e_S$-monotone path $P$, with $\c B_1 \subseteq \binom{[n]\ssm S}{k}$, and enhanced steps $\enhancedStep[a_i][b_i][\c B_i \ssm\{a_i\}]$ (ordered by $i$), we construct the \defn{lifted path $P^\uparrow$} defined by its steps as follows.
Let $i_1 > \dots > i_m$ be such that $\c B_1 \ssm[k] = \{a_{i_1}, \dots, a_{i_m}\}$.
Let $[k]\ssm \c B_1 = \{x_1, \dots, x_m\}$, and $j_1 > \dots > j_r$ be such that $\c B_{k+1} \ssm[n-k+1, n] = \{b_{j_1}, \dots, b_{j_r}\}$ and $[n-k+1, n] = \{y_1, \dots, y_r\}$.
$$\c S(P^\uparrow) = \left\{\begin{array}{l}
x_1 \to a_{i_1} \prec x_2 \to a_{i_2} \prec \dots \prec x_m \to a_{i_m} \prec \\
a_1 \to b_1 \prec a_2 \to b_2 \prec \dots \prec a_k \to b_k \prec \\
b_{j_1} \to y_1 \prec b_{j_2} \to y_2 \prec \dots \prec b_{j_r} \to y_r
\end{array}\right.$$
\end{definition}

\begin{proposition}\label{prop:LiftedPath}
For all $s\in [n-1]$, and $P$ an $\b e_{[n-s+1, n]}$-monotone path, the lifted path $P^\uparrow$ satisfies the enhanced steps criterion, \ie for all $\enhancedStep[i][j][A] \prec \enhancedStepZ\in \c S(P^\uparrow)$, if~$x < j$, then $x\in A$ or $j\in Z$.
\end{proposition}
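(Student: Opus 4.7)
The plan is to verify the enhanced steps criterion of \Cref{thm:EnhancedStepsCriterion} by a case analysis on which of the three phases of $P^\uparrow$ each enhanced step of the given pair belongs to. First I would record some structural facts about the underlying $\b e_S$-monotone path $P$: the $a_q$'s are distinct elements of $[n]\ssm S$ and $\{a_1,\dots,a_k\} = \c B_1$; the $b_q$'s are distinct elements of $S$ and $\{b_1,\dots,b_k\} = \c B_{k+1}$; no $b$-value is ever removed along $P$, so $b_p\in\c B_q$ for all $q\geq p$; and $a_q\in\c B_t$ exactly when $t\leq q$. I would also note the ranges $x_p\in[k]$, $a_{i_p}\in[k+1,n-s]$, $b_{j_t}\in[n-s+1,n-k]$, $y_t\in[n-k+1,n]$, and compute the explicit rests $A^{(1)}_p = ([k]\ssm\{x_1,\dots,x_p\})\cup\{a_{i_1},\dots,a_{i_{p-1}}\}$ in phase~1, $\c B_q\ssm\{a_q\}$ in phase~2, and $A^{(3)}_t = (\c B_{k+1}\ssm\{b_{j_1},\dots,b_{j_t}\})\cup\{y_1,\dots,y_{t-1}\}$ in phase~3.

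Two of the six phase-pair cases become vacuous from the ranges alone: a phase~1 step followed by a phase~3 step satisfies $a_{i_p}\leq n-s < n-s+1\leq b_{j_t}$, killing the hypothesis $x<j$; and two phase~3 steps satisfy $y_q\geq n-k+1 > n-k\geq b_{j_p}$. Two further cases are immediate from the structural facts: for two phase~1 steps $x_p\to a_{i_p}$ preceding $x_q\to a_{i_q}$ with $p<q$, the later rest $A^{(1)}_q$ contains $a_{i_p}$; for two phase~2 steps $a_p\to b_p$ preceding $a_q\to b_q$, the value $b_p$ lies in $\c B_q$ and differs from $a_q$ (one is in $S$, the other is not), so $b_p\in\c B_q\ssm\{a_q\}$.

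The two substantive cases are phase~1 followed by phase~2 and phase~2 followed by phase~3, and here the decreasing orderings $i_1>\dots>i_m$ and $j_1>\dots>j_r$ prescribed in the definition of $P^\uparrow$ become essential. For $x_p\to a_{i_p}$ preceding $a_q\to b_q$ with $a_q<a_{i_p}$, I would split on the decomposition $\c B_1 = ([k]\cap\c B_1)\sqcup\{a_{i_1},\dots,a_{i_m}\}$: if $a_q\in[k]\cap\c B_1$ then $a_q\in A^{(1)}_p$; otherwise $a_q = a_{i_t}$ for a unique $t\neq p$, and if $t<p$ then $a_q\in\{a_{i_1},\dots,a_{i_{p-1}}\}\subseteq A^{(1)}_p$, while if $t>p$ then the decreasing order yields $q = i_t < i_p$, so $a_{i_p}\in\c B_q\ssm\{a_q\}$. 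The analysis for $a_p\to b_p$ preceding $b_{j_t}\to y_t$ with $b_{j_t}<b_p$ is strictly parallel, splitting on $\c B_{k+1} = (\c B_{k+1}\cap[n-k+1,n])\sqcup\{b_{j_1},\dots,b_{j_r}\}$: if $b_p\in[n-k+1,n]$ then $b_p\in A^{(3)}_t$, otherwise $b_p = b_{j_u}$, and $u<t$ forces $j_t<p$ hence $b_{j_t}\in\c B_p\ssm\{a_p\}$, while $u>t$ gives $b_p\in A^{(3)}_t$.

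The main obstacle lies in these last two mixed cases, where one must carefully track which lifted index has already been placed (or already removed) at the moment of a later phase~2 step. The key point, which makes the case analysis close, is that the decreasing-index ordering of phase~1 (respectively phase~3) synchronizes the moment at which an index is inserted (respectively removed) in $P^\uparrow$ with its fate in $P$, so that the sub-cases $t<p$ and $t>p$ tile the possibilities without gap.
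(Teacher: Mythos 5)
Your case analysis is the right approach and matches the paper's proof quite closely, but the phase~3/phase~3 pair is misclassified. You claim it is vacuous, citing $y_q\geq n-k+1 > n-k\geq b_{j_p}$; this inequality is true, but it compares the wrong quantities. The criterion's hypothesis is $x < j$ where $x$ is the source of the \emph{later} step and $j$ is the destination of the \emph{earlier} step; for $b_{j_p}\to y_p$ preceding $b_{j_q}\to y_q$ this reads $b_{j_q} < y_p$, and by your own range computation $b_{j_q}\leq n-k < n-k+1 \leq y_p$, so the hypothesis always \emph{holds}. The case is therefore substantive, and as written your plan does not cover it.

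The repair is immediate and parallels your phase~1/phase~1 argument: since $p<q$, the later rest $A^{(3)}_q = \bigl(\mathcal B_{k+1}\smallsetminus\{b_{j_1},\dots,b_{j_q}\}\bigr)\cup\{y_1,\dots,y_{q-1}\}$ contains $y_p$, giving $j\in Z$ (and in fact $b_{j_q}\in A^{(3)}_p$ as well, so both conclusions of the criterion hold, just as in the other same-phase pairs). With that correction your proof is complete and follows the paper's: both split on phase, dispose of the genuinely vacuous 1/3 pair by the range separation at $n-s$, and for the mixed pairs 1/2 and 2/3 exploit the decreasing orderings $i_1>\dots>i_m$ and $j_1>\dots>j_r$ exactly as you do. Your write-up simply spells out the dichotomies ($a_q\in[k]\cap\mathcal B_1$ versus $a_q=a_{i_t}$, then $t<p$ versus $t>p$, and symmetrically in phase~3) in more detail than the paper, which is helpful.
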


\begin{proof}
Steps in $\c S(P^\uparrow)$ are either of form $x_p \to a_{i_p}$, or $a_p\to b_p$, or $b_{j_p} \to y_p$.
The enhanced steps criterion holds between two steps of the second type: if $\enhancedStep[a_i][b_i][\c B_i \ssm\{a_i\}] \prec \enhancedStep[a_j][b_j][\c B_j \ssm\{a_j\}]$, then $i < j$, ensuring $b_i \in \c B_j \ssm\{a_j\}$ and $a_j \in \c B_i \ssm\{a_i\}$.
Similarly, it holds between any two steps of the same type.
Now pick $\enhancedStep[x_p][a_{i_p}][A] \prec \enhancedStep[a_q][b_q][Z]$.
Suppose $a_q < a_{i_p}$.
If $q < i_p$, then $i_p \in \c B_q$, 
so $a_{i_p}\in Z$.
If $i_p < q$, then either $a_q\in [k]\cap\c B_1$, or there is a step $x\to a_q$ preceding the step $\enhancedStep[x_p][a_{i_p}][A]$.
In both situations, $a_q\in A$. 
Consequently, the enhanced steps criterion holds.
The case of a step of second kind and of third kind is similar.
Finally, if $\enhancedStep[x_p][a_{i_p}][A] \prec \enhancedStep[b_{j_q}][y_q][Z]$, then $a_{i_p} < b_{j_q}$ because $S = [n-s+1, n]$, $a_{i_p}\notin S$ and $b_{j_q}\in S$, so the enhanced steps criterion holds.
\end{proof}

We have shown that, for $\HypSimpl$, all coherent paths for a non-generic direction $\b e_S$ for $S\subseteq[n]$ can be lifted to a monotone path for a generic direction satisfying the enhanced steps criterion: a path that ``should'' be coherent.
For general $k$, we are not able to prove that this path is indeed coherent, because the enhanced steps criterion is not sufficient.
However, for $k = 2$ this criterion is sufficient: we conclude here, even though this sufficiency is proven in the next section.

\begin{corollary}
For all $n\geq 1$ and $S\subseteq[n]$, for all coherent $\b e_S$-monotone path $P$ on $\HypSimplTwo$, there exists a coherent $\b c$-monotone path $P'$ on $\HypSimplTwo$ for a generic $\b c$ such that $P$ is a sub-path of $P'$.
\end{corollary}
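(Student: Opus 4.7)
The plan is to combine everything that has just been assembled: the lifted path construction, \Cref{prop:LiftedPath}, and the (forthcoming) sufficiency of the enhanced steps criterion for $k=2$.

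First, I would reduce to the case $S = [n-s+1, n]$ where $s = |S|$. Since $\HypSimplTwo$ is invariant under permutation of coordinates and the coherence of a monotone path only depends on $S$ through the cellular-string structure it induces, applying the permutation sending $S$ to $[n-s+1,n]$ transports a coherent $\b e_S$-monotone path to a coherent $\b e_{[n-s+1,n]}$-monotone path, and generic $\b c$-directions pull back to generic $\b c'$-directions. So we may assume $S=[n-s+1,n]$.

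Next, given a coherent $\b e_S$-monotone path $P$ with $k=2$, I would form the lifted path $P^\uparrow$ as in the definition preceding \Cref{prop:LiftedPath}. By construction, the middle block of $\c S(P^\uparrow)$ is exactly $a_1\to b_1 \prec a_2\to b_2 \prec \dots \prec a_k\to b_k = \c S(P)$, so $P$ is a sub-path of $P^\uparrow$; and the endpoints of $P^\uparrow$ are $\vmin$ and $\vmax$, making $P^\uparrow$ a genuine monotone path on $\HypSimplTwo$ for any generic $\b c$ with $c_1<\dots<c_n$. By \Cref{prop:LiftedPath}, $P^\uparrow$ satisfies the enhanced steps criterion of \Cref{thm:EnhancedStepsCriterion}.

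The key input is then the sufficiency of the enhanced steps criterion in the case $k=2$, which is the main result of the next section: any monotone path on $\HypSimplTwo$ satisfying the criterion of \Cref{thm:EnhancedStepsCriterion} is coherent for the fixed generic $\b c = (c_1,\dots,c_n)$. Applying this to $P^\uparrow$ gives that $P^\uparrow$ is coherent for $\b c$, completing the proof.

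The only real subtlety, and what I expect to be the main obstacle if one wanted to make this argument totally self-contained, is precisely the sufficiency of the enhanced steps criterion for $k=2$; but this is deferred to the combinatorial lattice-path model developed in \Cref{ssec:TowardsLatticePaths} and \Cref{ssec:InductionProcess}, so here it can be invoked as a black box. Everything else is bookkeeping about the ordering of the prepended and appended steps in $P^\uparrow$, which has already been handled in \Cref{prop:LiftedPath}.
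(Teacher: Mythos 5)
Your proposal is correct and follows exactly the route the paper takes: reduce to $S=[n-s+1,n]$, form the lifted path $P^\uparrow$, invoke \Cref{prop:LiftedPath} to get the enhanced steps criterion for $P^\uparrow$, and then apply the sufficiency of that criterion for $k=2$ (\Cref{cor:EnhancedStepCriterionIsCoherence}) to conclude coherence. The paper's own proof is a one-liner citing precisely these two ingredients; yours is the same argument spelled out in more detail.
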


\begin{proof}
According to \Cref{prop:LiftedPath}, the path $P^\uparrow$ respects the hypothesis of \Cref{cor:EnhancedStepCriterionIsCoherence}.
\end{proof}

\section{Sufficiency of the enhanced steps criterion in the case $\HypSimplTwo$}\label{ssec:SufficiencyCriterionHypSimplTwo}

\subsection{From coherent paths to lattice paths}\label{ssec:TowardsLatticePaths}

We are going to prove that for $\HypSimplTwo$, the criterion of \Cref{thm:EnhancedStepsCriterion} is actually sufficient.
To this end, we want to associate monotone paths on $\HypSimpl$ with some lattice paths on the integer grid~$[n]^k$.
A first idea to do so would be to associate to each vertex $\b v_i$ in the path $P = (\b v_1, \dots, \b v_r)$ a point $\b\ell_i=(\ell_{i,1},\dots,\ell_{i,k})\in[n]^k$ satisfying $\{\ell_{i,1},\dots,\ell_{i,k}\} = \supp[\b v_i]$.
This leaves $k!$ possible choices for $\b\ell_i$.
Even though a \emph{natural} choice would be to impose $\ell_{i,1}<\dots<\ell_{i,k}$, we will prefer another one.
Indeed, as $\b v_i$ and $\b v_{i+1}$ form an edge of $\HypSimpl$, there is only one index differing between $\supp[\b v_i]$ and $\supp[\b v_{i+1}]$, so we will impose that $\b\ell_i$ and $\b\ell_{i+1}$ differ at only one coordinate.

Although this idea allows us to embed our problem into the realm of lattice paths, it has the drawback that $k!$ different lattice points are associated to a same vertex, see \Cref{fig:ProjectionExample} (Right).

\begin{definition}
A \defn{diagonal-avoiding lattice path} $\c L = (\b \ell_1,\dots,\b \ell_r)$ of \defn{size} $n$ and \defn{dimension} $k$ is an ordered list of points $\b\ell_i\in [n]^k$ such that:
\begin{itemize}
\item $\b\ell_1 = (k,k-1,\dots,1)$;
\item $\b\ell_r = (\ell_{r,1},\dots,\ell_{r,k})$ with $\{\ell_{r,1},\dots,\ell_{r,k}\} = \{n-k+1,\dots,n\}$;
\item for all $i\in [r]$, $\ell_{i,p} \ne \ell_{i,q}$ for all $p,q\in [k]$ with $p\ne q$;
\item for all $i\in[r-1]$, there exists $p\in [k]$ such that $\ell_{i,p} < \ell_{i+1,p}$, and $\ell_{i,q} = \ell_{i+1,q}$ for all $q\ne p$. The \defn{$i$-th enhanced step} of $\c L$ is denoted by $\enhancedStep[\ell_{i,p}][\ell_{i+1,p}][Z]$ with $Z = \{\ell_{i,q} ~;~ q\ne p\}$.
\end{itemize}

The ordered list of enhanced steps of $\c L$ is denoted by $\c S(\c L)$.
The \defn{length} of $\c L$ is $r$.
\end{definition}

To a path $P = (\b v_1, \dots, \b v_r)$ on $\HypSimpl$, one can associate a diagonal-avoiding lattice path $\scr L(P) = \bigl(\b \ell_1,\dots,\b \ell_r\bigr)$ of size $n$ and dimension $k$ defined by $\c S(\scr L(P)) = \c S(P)$, see \Cref{fig:PathExample}.

\begin{figure}[b]
    \centering
    \includegraphics[scale=0.4]{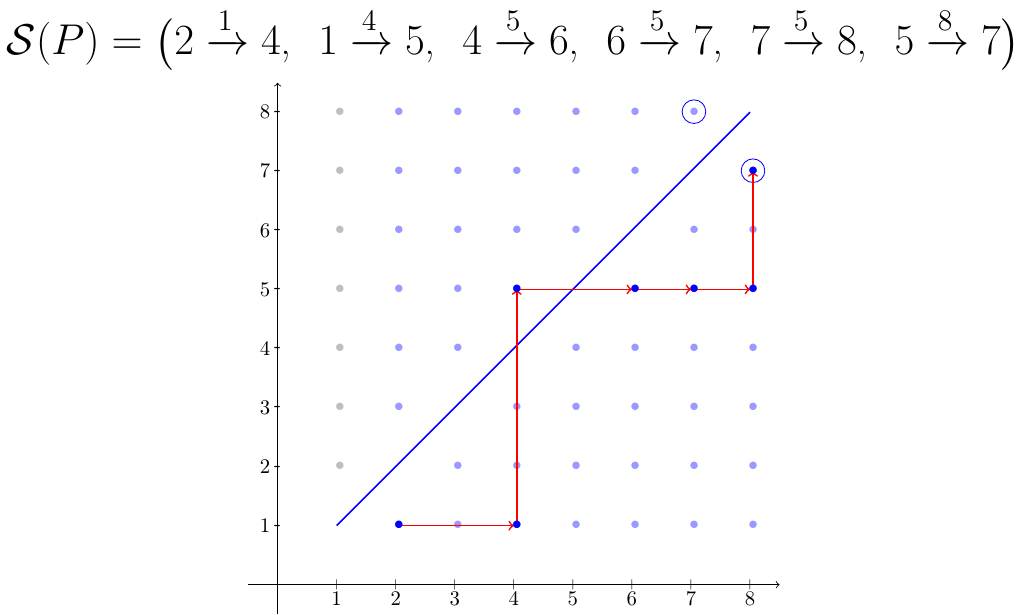}
    \caption[Lattice path associated to a given ordered list of enhanced steps]{A lattice path, and its ordered list of enhanced steps (size 8, dimension 2, length 6).}
    \label{fig:PathExample}
\end{figure}

\begin{proposition}
The map $P \mapsto \scr L(P)$ is a bijection from monotone paths on $\HypSimpl$ to diagonal-avoiding lattice paths of size $n$ and dimension $k$.
\end{proposition}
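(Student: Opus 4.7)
The plan is to observe that both a monotone path on $\HypSimpl$ and a diagonal-avoiding lattice path of size $n$ and dimension $k$ are entirely determined by their ordered list of enhanced steps, so the map $P \mapsto \scr L(P)$ — which is defined by $\c S(\scr L(P)) = \c S(P)$ — becomes a bijection as soon as the two notions of ``valid enhanced step sequence'' coincide.

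First I would build $\scr L(P) = (\b \ell_1,\dots,\b \ell_r)$ inductively from $\c S(P) = (\enhancedStep[x_1][y_1][Z_1],\dots,\enhancedStep[x_{r-1}][y_{r-1}][Z_{r-1}])$. I set $\b \ell_1 = (k, k-1,\dots,1)$, whose coordinates form $\{1,\dots,k\} = \supp[\b v_1]$. I would then prove by induction on $i$ the invariant that the coordinate multiset of $\b \ell_i$ equals $\supp[\b v_i] = Z_i\cup\{x_i\}$; granted this, there is a unique position $p\in[k]$ with $\ell_{i,p} = x_i$, and I define $\b \ell_{i+1}$ by $\ell_{i+1,p} = y_i$ and $\ell_{i+1,q} = \ell_{i,q}$ for $q\ne p$. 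The new coordinate set is $Z_i\cup\{y_i\} = \supp[\b v_{i+1}]$, which has $k$ distinct elements since $y_i\notin Z_i$, and the inequality $\ell_{i,p} = x_i < y_i = \ell_{i+1,p}$ holds because the edge $\b v_i\b v_{i+1}$ is $\b c$-improving. The invariant at the final index gives that the coordinates of $\b \ell_r$ form $\supp[\b v_r] = \{n-k+1,\dots,n\}$, so $\scr L(P)$ is indeed a diagonal-avoiding lattice path.

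For the inverse, given any diagonal-avoiding lattice path $\c L = (\b \ell_1,\dots,\b \ell_r)$ with $\c S(\c L) = (\enhancedStep[x_i][y_i][Z_i])_i$, I would define $P = (\b v_1,\dots,\b v_r)$ by declaring $\supp[\b v_i]$ to be the coordinate set of $\b \ell_i$. The endpoint conditions of $\c L$ force $\b v_1 = \vmin$ and $\b v_r = \vmax$; two consecutive supports $\supp[\b v_i]$ and $\supp[\b v_{i+1}]$ differ exactly by exchanging $x_i$ for $y_i$ with $x_i<y_i$, so each $(\b v_i,\b v_{i+1})$ is an improving edge of $\HypSimpl$. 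This yields a monotone path with $\c S(P) = \c S(\c L)$, so by the uniqueness established in the previous paragraph $\scr L(P) = \c L$; symmetrically, starting from $P$ and running both constructions recovers $P$.

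The main obstacle is really just the bookkeeping of the induction in the second paragraph: the coordinate multiset of $\b \ell_i$ must be exactly $\supp[\b v_i]$ at every stage, since this is what guarantees both that the move prescribed by the next enhanced step can be performed unambiguously (the coordinate $x_i$ appears in $\b \ell_i$ exactly once) and that distinctness of coordinates propagates. Once this invariant is established, injectivity and surjectivity are automatic from the fact that each side of the correspondence is rigidly reconstructed from its enhanced step sequence.
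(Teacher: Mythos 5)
Your argument is correct and follows essentially the same route as the paper: build $\scr L(P)$ step by step from $\c S(P)$, maintain the invariant that the coordinate set of $\b\ell_i$ equals $\supp[\b v_i]$, deduce the diagonal-avoiding properties from this invariant, and invert by reading the supports off the lattice points and observing that both objects are rigidly determined by their enhanced step sequences. The only cosmetic difference is that you phrase the key observation about rigid determination up front, whereas the paper folds it into the injectivity and surjectivity checks at the end.
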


\begin{proof}
Fix a monotone path $P$ on $\HypSimpl$.
Starting at $\b \ell_1 = (k,k-1,\dots,1)$, the lattice path $\scr L(P) = (\b\ell_1,\dots,\b\ell_i,\dots,\b\ell_r)$ can be defined by induction on $i$.
Indeed, denote by $\scr L(P)_{\leq i} = (\b\ell_1,\dots,\b\ell_i)$, and suppose that for a fixed $i$: $\{\ell_{j,1},\dots,\ell_{j,k}\} = \supp[\b v_j]$ for all $j\leq i$, and the enhanced steps of $\scr L(P)_{\leq i}$ are the $(i-1)$ first enhanced steps of $P$.
Consider the $i$-th enhanced step of $P$, say $\enhancedStepZ$.
As $\{\ell_{i,1},\dots,\ell_{i,k}\} = \supp[\b v_j]$, there exists $p\in[k]$ such that $\ell_{i,p} = x$, and $\{\ell_{i,1},\dots,\ell_{i,k}\}\ssm\{\ell_{i,p}\} = Z$.
By setting $\b\ell_{i+1}$ with $\ell_{i+1,q} = \ell_{i,q}$ for $q\ne p$, and $\ell_{i+1,p} = y$, we construct $\scr L(P)_{\leq i+1}$ that fulfills the induction hypothesis.
Hence, we can define $\scr L(P)$ such that $\c S(\scr L(P)) = \c S(P)$.
By induction, $\scr L(P)$ satisfies that $\{\ell_{i,1},\dots,\ell_{i,k}\} = \supp[\b v_i]$.
Moreover, as $|\supp[\b v_i]| = k$, we know that $\ell_{i,p} \ne \ell_{i,q}$ for all $p\ne q$.
Consequently, as $\supp[\b v_i]$ and $\supp[\b v_{i+1}]$ differ by only one element, $\scr L(P)$ is a diagonal-avoiding path.

As $P\mapsto\c S(P)$ is injective, it is immediate that $P\mapsto\scr L(P)$ is also injective.

Finally, for all diagonal-avoiding paths $\c L = (\b\ell_1,\dots,\b\ell_r)$, one can construct by induction an ordered list of vertices $P_{\c L} = (\b v_1,\dots,\b v_r)$ by taking $\b v_i = \sum_{j\in\ell_i} \b e_j$.
Such a path $P_{\c L}$ is a monotone path on $\HypSimpl$ thanks to the properties of diagonal-avoiding paths.
Moreover, as $\c S(P_{\c L}) = \c S(\c L)$, the map $\c L \mapsto P_{\c L}$ is the reciprocal of $P\mapsto \scr L(P)$.
\end{proof}

\begin{remark}
It is straightforward to see that the length of $P$, \ie the number of vertices contained in $P$, equals the length of $\scr L(P)$, \ie the number of lattice points contained in $\scr L(P)$.
\end{remark}

\begin{example}\label{exmp:SmallDApaths}
For size $n = 3$,
there are 2 diagonal-avoiding paths, one of length 1 and one of length 2.
As seen in \Cref{exmp:Coherent32}, all of them are images (by $\scr L$) of coherent paths on the simplex $\HypSimplTwo[3]$.

For size $n = 4$,
there are 10 diagonal-avoiding paths, see \Cref{fig:SmallDApaths}.
As seen in \Cref{exmp:Coherent42}, 8 of them are images (by $\scr L$) of coherent paths on $\HypSimplTwo[4]$, while 2 (of length 5) come from monotone but not coherent paths on $\HypSimplTwo[4]$.

\begin{figure}
    \centering
    \includegraphics[width=0.9\linewidth]{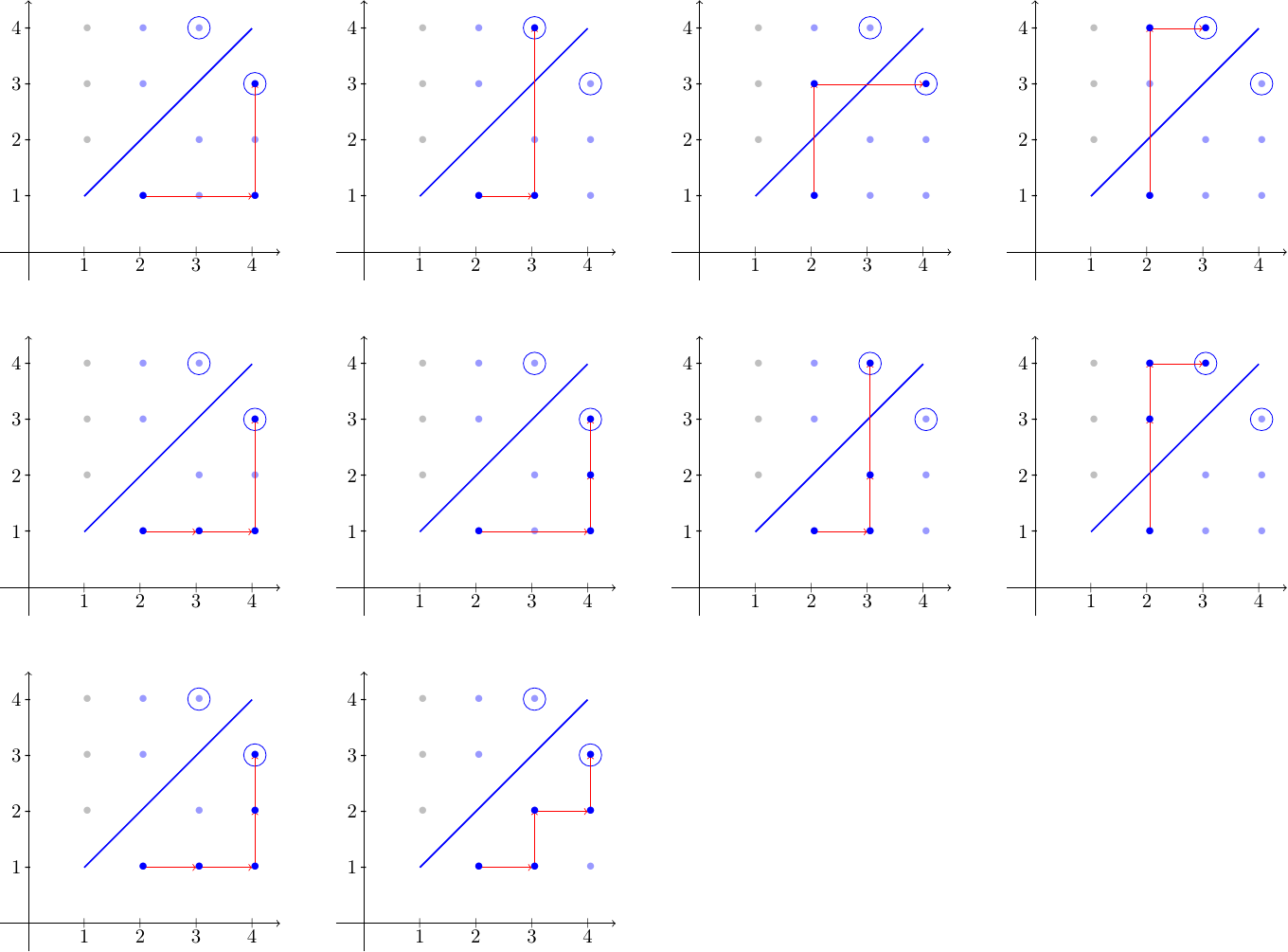}
    \caption{All $10$ diagonal-avoiding lattice paths of size 4 and dimension 2, sorted by size.}
    \label{fig:SmallDApaths}
\end{figure}
\end{example}

To ease notation, for an enhanced step of a path $P$ on $\HypSimplTwo$ or enhanced steps of diagonal-avoiding lattice paths of dimension $2$, we will write $\enhancedStep$ instead of $\enhancedStep[i][j][\{a\}]$.
We will now study diagonal-avoiding paths of dimension 2.
In particular, we will show that coherent monotone paths on $\HypSimplTwo$ are associated with a certain family of diagonal-avoiding lattice paths, and that this family respects an induction process (which is cumbersome but powerful).
To describe this induction process for our family, we need the notion of \emph{restriction} of diagonal-avoiding lattice paths, which consists of shrinking the lattice grid $[n]^2$: given a diagonal-avoiding lattice path on $[n+1]^2$, erase the points of $[n+1]^2\ssm[n]^2$ ; the path obtained on $[n]^2$ will probably not end at the right spot, but can be completed it to mimic the path you started with.
The following definition formalizes this idea.

\begin{definition}
The \defn{restriction} of a diagonal-avoiding lattice path $\c L = (\b\ell_1,\dots,\b\ell_r)$ of size $n+1$ and dimension $2$ is the diagonal-avoiding lattice path $\c L' = (\b\ell'_1,\dots,\b\ell'_s)$ of size $n$ and dimension $2$ defined by:
\begin{enumerate}
\item First, for all $i\in[r]$ define $\ell'_{i,p} = \left\{\begin{array}{ll}
    \ell_{i,p} &  \text{ if } \ell_{i,p} \ne n+1 \\
    n & \text{ else}
\end{array}\right.$ (for $p\in \{1,2\}$) with $s = r$,
\item Next, as $\b\ell'_r = (n,n)$: if $\b\ell'_{r-1} = (x,n)$ then set $\b\ell'_r = (n-1,n)$, whereas if $\b\ell'_{r-1} = (n,x)$ then set $\b\ell'_r = (n,n-1)$;
\item Finally, if $\b\ell'_i = \b\ell'_{i+1}$, then discard $\b\ell'_{i+1}$ (and keep discarding until no doubles remain).
\end{enumerate}
\end{definition}

Even though this definition seems convoluted, it has a very straightforward illustration, see \Cref{fig:Restriction}: as explained before,
draw the path $\c L$ on the $(n+1)\times (n+1)$ grid, then $\c L'$ is obtained by first restricting $\c L$ to the $n\times n$ grid, then mimicking the steps $\enhancedStep[i][j][n+1]$ of $\c L$ by introducing the steps $\enhancedStep[i][j][n]$ in $\c L'$ (and slightly modifying $\c L'$ to make it diagonal-avoiding).

\begin{figure}
    \centering
    \includegraphics[width=0.85\linewidth]{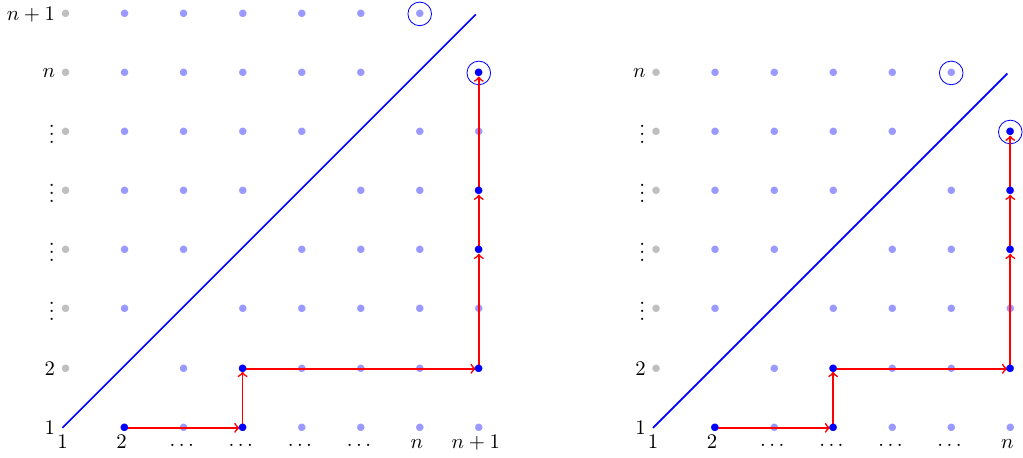}
        \caption[Restriction of a diagonal-avoiding lattice path]{The restriction of $\c L$ to $\c L'$. The enhanced steps are $\c S(\c L) = \bigl(\enhancedStep[2][4][1],~\enhancedStep[1][2][4],~\enhancedStep[4][n+1][2],~\enhancedStep[2][4][n+1],~\enhancedStep[4][5][n+1],~\enhancedStep[5][n][n+1]\bigr)$ to $\c S(\c L') = \bigl(\enhancedStep[2][4][1],~\enhancedStep[1][2][4],~\enhancedStep[4][n][2],~\enhancedStep[2][4][n],~\enhancedStep[4][5][n],~\enhancedStep[5][n-1][n]\bigr)$.}
    \label{fig:Restriction}
\end{figure}


We can now introduce the main object of this section:

\begin{definition}
A diagonal-avoiding lattice path $\c L$ of dimension 2 is said to be a \defn{coherent lattice path} if for all couples of enhanced steps $\enhancedStep \prec \enhancedStepx$ with $x < j$, we have $j = z$ or $x = a$.
\end{definition}

Now, we will study the set of coherent lattice paths of size $n$.
First, we will prove that such lattice paths can be constructed inductively.
Then, we will show that the bijection $P \mapsto \scr L(P)$ (between monotone paths and diagonal-avoiding paths)
sends coherent paths on $\HypSimplTwo$ to coherent lattice paths.
Finally, our inductive construction will allow us to count the number of coherent paths on $\HypSimplTwo$.

\subsection{The induction process}\label{ssec:InductionProcess}

\begin{theorem}\label{thm:InductionProcess}
For $n\geq 3$, let $\c L$ be a coherent lattice path of size $n+1$ and $\c L'$ its restriction of size $n$. Then $\c L'$ is coherent and $\c L$ can be reconstructed from $\c L'$ as it belongs to one of these (mutually exclusive) 12 cases:
\begin{enumerate}[(i)]
\item if $\c L'$ ends by a step $\enhancedStep[x][n][n-1]$ with $x < n-1$, then let $\c S' = \c S(\c L') \ssm \{\enhancedStep[x][n][n-1]\}$. One of the following holds (see \Cref{fig:InductionTrois}):
    \begin{enumerate}
        \item $\c S(\c L) = \c S(\c L') \cup \{\enhancedStep[n-1][n+1][n]\}$
        \item $\c S(\c L) = \c S(\c L') \cup \{\enhancedStep[n][n+1][n-1], ~\enhancedStep[n-1][n][n+1]\}$
        \item $\c S(\c L) = \c S' \cup \{\enhancedStep[x][n+1][n-1], ~\enhancedStep[n-1][n][n+1]\}$
    \end{enumerate}
\item if $\c L'$ ends by steps $\enhancedStep[x][n][y_1]$, $\enhancedStep[y_1][y_2][n], \dots, \enhancedStep[y_{m-1}][y_m][n]$ with $x < n-1$, $m \geq 3$ and $y_1 < \dots < y_m = n-1$, then let $\c S' = \c S(\c L') \ssm \{\enhancedStep[x][n][y_1], ~\enhancedStep[y_1][y_2][n], ~\dots, ~\enhancedStep[y_{m-1}][y_m][n]\}$. One of the following holds (see \Cref{fig:InductionQuatre}):
    \begin{enumerate}
        \item $\c S(\c L) = \c S(\c L') \cup \{\enhancedStep[n-1][n+1][n]\}$
        \item $\c S(\c L) = \bigl(\c S(\c L') \ssm \{\enhancedStep[y_{m-1}][n-1][n]\}\bigr) \cup \{\enhancedStep[y_{m-1}][n+1][n]\}$
        \item $\c S(\c L) = \c S' \cup \{\enhancedStep[x][n+1][y_1], ~\enhancedStep[y_1][y_2][n+1], ~\dots, ~\enhancedStep[y_{m-1}][n-1][n+1], ~\enhancedStep[n-1][n][n+1]\}$
        \item $\c S(\c L) = \c S' \cup \{\enhancedStep[x][n+1][y_1], ~\enhancedStep[y_1][y_2][n+1], ~\dots, ~\enhancedStep[y_{m-1}][n][n+1]\}$
    \end{enumerate}
\item if $\c L'$ ends by steps $\enhancedStep[x][n][y]$, $\enhancedStep[y][n-1][n]$ with $x < n$ and $y < n-1$, then let $\c S' = \c S(\c L') \ssm \{\enhancedStep[x][n][y], ~\enhancedStep[y][n-1][n]\}$. One of the following holds (see \Cref{fig:InductionCinq}):
    \begin{enumerate}
        \item $\c S(\c L) = \c S(\c L') \cup \{\enhancedStep[n-1][n+1][n]\}$
        \item $\c S(\c L) = \bigl(\c S(\c L') \ssm \{\enhancedStep[y][n-1][n]\} \bigr) \cup \{\enhancedStep[y][n+1][n]\}$
        \item $\c S(\c L) = \bigl(\c S(\c L') \ssm \{\enhancedStep[y][n-1][n]\} \bigr) \cup \{\enhancedStep[n][n+1][y], ~\enhancedStep[y][n][n+1]\}$
        \item $\c S(\c L) = \c S' \cup \{\enhancedStep[x][n+1][y], ~\enhancedStep[y][n-1][n+1], ~\enhancedStep[n-1][n][n+1]\}$
        \item $\c S(\c L) = \c S' \cup \{\enhancedStep[x][n+1][y], ~\enhancedStep[y][n][n+1]\}$
    \end{enumerate}
\end{enumerate}
\end{theorem}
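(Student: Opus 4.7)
The plan is to prove the theorem in three stages: first show that restriction preserves coherence, then classify the possible endings of a coherent path of size $n$, and finally enumerate for each ending the valid lifts to a coherent path of size $n+1$. Throughout, I exploit the fact that each step strictly increases one coordinate, so once a coordinate reaches $n+1$ in $\c L$ it stays there; in particular, the steps of $\c L$ involving $n+1$ (as destination or as fixed coordinate) form a \emph{suffix} of $\c L$, which is precisely what needs to be described relative to $\c L'$.

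For stage 1, the enhanced steps of $\c L'$ are obtained from those of $\c L$ by replacing every occurrence of $n+1$ by $n$ and then collapsing any repeated points; the last-point rewriting prescribed in the restriction definition handles the final collapse to $(n-1,n)$ or $(n,n-1)$. Given two steps of $\c L'$ triggering the coherence criterion with $x < j$, I would lift them back to the corresponding steps of $\c L$. A short case analysis on which of $i, j, a, x, y, z$ were originally $n+1$ shows that the inequality still holds in $\c L$ (the only delicate case is $j = n+1$ substituted to $j = n$, but since $x \leq n$ always one has $x < n+1$ automatically); hence by coherence of $\c L$ either $x = a$ or $j = z$ holds in $\c L$, and either identity descends to $\c L'$ because the substitution $n+1 \mapsto n$ is consistent on both sides of an equality.

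For stage 2, a coherent lattice path of size $n$ terminates at a point in $\{n-1, n\}^2$ off the diagonal, so its final step is either $\enhancedStep[x][n][n-1]$ (giving case (i) when $x < n-1$) or $\enhancedStep[x][n-1][n]$; in the latter situation, the fixed coordinate $n$ was reached by an earlier step, and tracing back the maximal suffix of steps sharing that fixed coordinate $n$ yields a chain $\enhancedStep[y_{p-1}][y_p][n]$ preceded by a unique step $\enhancedStep[x][n][y_1]$ that first placed a coordinate at $n$. The length of this chain distinguishes case (ii) (length $\geq 3$) from case (iii) (length exactly $2$); by construction the three cases are mutually exclusive and exhaustive. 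For stage 3, in each of the three cases I enumerate the local suffix patterns involving $n+1$ that (a) produce a coherent $\c L$ of size $n+1$, (b) restrict back to the prescribed $\c L'$, and (c) exhaust the possibilities. Coherence is verified by applying the enhanced-step criterion between each newly introduced step and the relevant nearby steps of $\c L'$ (those of the chain identified in stage 2), using that $\c L'$ is already coherent; exhaustiveness follows by showing that any alternative suffix either fails coherence against some step of $\c L'$ identified in stage 2, or restricts to a different path.

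The main obstacle will be stage 3: one must show that the twelve sub-cases exactly cover all possibilities. The enhanced-step criterion is global (constraining every pair of steps), while the modification is local (only affecting the suffix), so the verification is pairwise straightforward yet combinatorially delicate. In particular, the chain of steps with fixed coordinate $n$ in $\c L'$ identified in stage 2 matters enormously, because each such step corresponds after lifting to several possible patterns involving $n+1$ in $\c L$; disentangling these patterns while keeping track of the coherence constraints with the entry step $\enhancedStep[x][n][y_1]$ is the source of the bulk of the twelve-case bookkeeping.
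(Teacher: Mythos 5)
Your three-stage plan coincides with the structure of the paper's proof: show that restriction preserves coherence, classify the possible endings of a coherent path of size $n$ into the three types indexed by the length $m$ of the trailing chain of steps sharing fixed coordinate $n$ (with $m=1,2,\geq 3$ giving cases $(i)$, $(iii)$, $(ii)$), and then for each type list and verify the lifts. The classification in stage 2 is indeed exhaustive and mutually exclusive, and your observation that the steps involving $n+1$ form a suffix of $\c L$ is the right organizing idea. However, the proposal stops at the point where the mathematical work actually begins.

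The genuine gap is stage 3, which is not a sub-step but is essentially the entire proof. You write that ``coherence is verified by applying the enhanced-step criterion between each newly introduced step and the relevant nearby steps of $\c L'$'' and that ``exhaustiveness follows by showing that any alternative suffix either fails coherence \dots or restricts to a different path,'' but neither of these is carried out, and your own closing paragraph concedes that this is ``the source of the bulk of the twelve-case bookkeeping.'' This bookkeeping is not routine: for each of the twelve lifts one must verify pairwise coherence against \emph{all} prior steps, not just those in the terminal chain (for example, in cases $(ii)(c)$, $(ii)(d)$ and $(iii)(d)$ one needs the auxiliary fact that any earlier step $\enhancedStep[i][j][a]\in\c S'$ has target $j\le\max\{x,y_1\}\le n-1$, which requires its own argument), and for exhaustiveness one must actively exclude the extra candidate lifts in case $(ii)$ coming from the last $\enhancedStepx[i][n+1][a]$ having $(i,a)=(n,y_p)$, and the one extra candidate in case $(iii)$, by exhibiting a concrete violation of the criterion. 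Without this, what you have written is a restatement of what needs to be proved rather than a proof. As a smaller issue, your stage 1 argument for coherence of $\c L'$ compresses the case analysis into the remark that ``the substitution $n+1\mapsto n$ is consistent on both sides of an equality''; the paper instead separates the first and second step of the offending pair into $2\times 3$ cases according to whether each was unchanged, had its target renamed, or had its fixed coordinate renamed, and isolates one combination that at first sight could break down, then rules it out by coherence of $\c L$. Your version would need that level of care to be convincing.
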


\begin{figure}[h]
    \centering
    \includegraphics[scale=0.85]{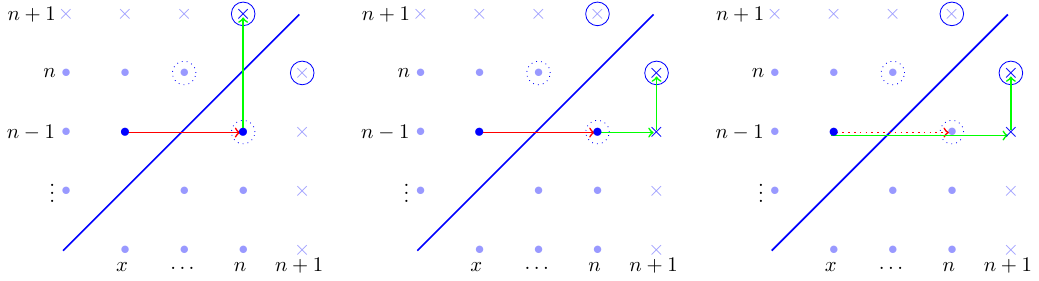}
    \caption{All $3$ paths of size $n+1$ that restrict to a path of size $n$ of type $(i)$ in \Cref{thm:InductionProcess}.}
    \label{fig:InductionTrois}

    \centering
    \includegraphics[scale=0.63]{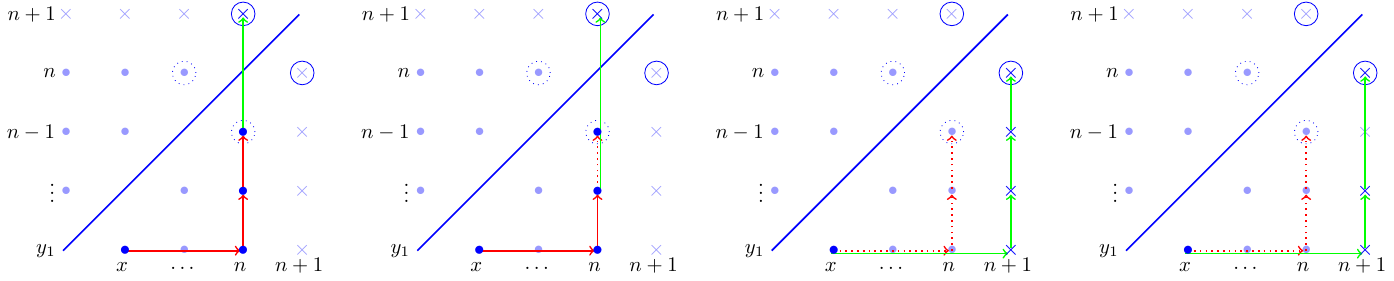}
    \caption{All $4$ paths of size $n+1$ that restrict to a path of size $n$ of type $(ii)$ in \Cref{thm:InductionProcess}.}
    \label{fig:InductionQuatre}

    \centering
    \includegraphics[scale=0.5]{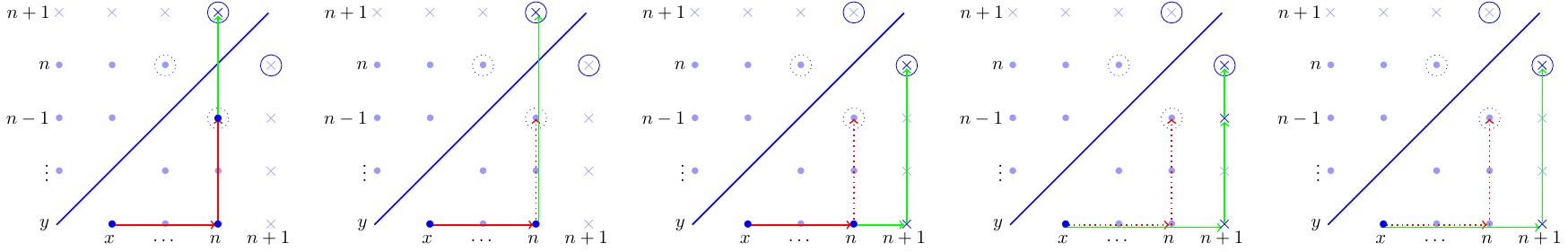}
    \caption{All $5$ paths of size $n+1$ that restrict to a path of size $n$ of type $(iii)$ in \Cref{thm:InductionProcess}.}
    \label{fig:InductionCinq}
\end{figure}

\begin{proof}
Observe first that if $\c L$ is a coherent lattice path of size $n+1$, then its restriction $\c L'$ of size $n$ is also coherent. 
Indeed, if $\enhancedStep \prec\enhancedStepx\in\c S(\c L')$ with $x < j$, then the following hold:
\begin{compactenum}[$\bullet$]
\item either $\enhancedStep\in\c S(\c L)$, or $\enhancedStep[i][n+1]\in\c S(\c L)$ and $j=n$;
\item either $\enhancedStepx\in\c S(\c L)$, or $\enhancedStepx[x][n+1]\in\c S(\c L)$ and $y = n$, or $\enhancedStep[x][y'][n+1]\in\c S(\c L)$ and $z = n$ and $y'\in\{y,n\}$.
\end{compactenum}
As $\c L$ is coherent, $x = a$ or $j = z$ in all cases except if $\enhancedStep \in \c S(\c L)$ with $j=n$ and $\enhancedStepx[x][y][n+1]\in\c S(\c L)$ with $x > a$. But then $x > a > j$ and $x \ne a$, $j \ne n+1$, breaking the coherence of $\c L$.
Now, we prove that all 12 cases lead to coherent paths, and that there is no other coherent path of size $n+1$.

We say that $\enhancedStep \prec\enhancedStepx$ are \emph{mutually coherent} if $x \geq j$, or if $x < j$ and $j = z$ or $x = a$.

\underline{Case $(i)(a)$, $(ii)(a)$ and $(iii)(a)$} If $\enhancedStep\in \c S(\c L')$ satisfies $n-1 < j$, then $j = n$ so adding $\enhancedStep[n-1][n+1][n]$ to $\c S(\c L')$ does not infringe coherence.

\underline{Case $(i)(b)$} $\enhancedStep[n][n+1][n-1]$ and $\enhancedStep[n-1][n][n+1]$ are mutually coherent.
If $\enhancedStep\in\ S(\c L')$ satisfies $n-1 < j$, then $j = n$, $a = n-1$, so $\enhancedStep$ is mutually coherent with $\enhancedStep[n][n+1][n-1]$ and $\enhancedStep[n-1][n][n+1]$.

\underline{Case $(i)(c)$ and $(iii)(e)$} $\enhancedStep[x][n+1][y]$ and $\enhancedStep[y][n][n+1]$ are mutually coherent, and if $\enhancedStep\in\ S'$ then $j \leq y$, so $\enhancedStep$ is mutually coherent with both $\enhancedStep[x][n+1][y]$ and $\enhancedStep[y][n][n+1]$.

\underline{Case $(ii)(b)$ and $(iii)(b)$} Changing the endpoint of the last enhanced step doesn't interfere with mutual coherence (with previous steps).

\underline{Case $(ii)(c)$ and $(ii)(d)$} For $p \in [m-1]$, $\enhancedStep[x][n+1][y_1]$ and $\enhancedStep[y_p][y_{p+1}][n+1]$ are mutually coherent as $\enhancedStep[x][n][y_1]$ and $\enhancedStep[y_p][y_{p+1}][n]$ are in $\c S(\c L')$; if $\enhancedStep\in \c S'$, then $j \leq \max\{x,y_1\} \leq n-1$ and thus $\enhancedStep$ is mutually coherent with $\enhancedStep[y_p][y_{p+1}][n+1]$, and mutually coherent with $\enhancedStep[x][n+1][y_1]$ as $\enhancedStep[x][n][y_1]\in \c S(\c L')$.

\underline{Case $(iii)(d)$} The above argument applies here, replacing $y_1$ by $y$.

\underline{Case $(iii)(c)$} The steps $\enhancedStep[x][n][y]$, $\enhancedStep[n][n+1][y]$ and $\enhancedStep[y][n][n+1]$ are mutually coherent, and if $\enhancedStep\in \c S(\c L')\ssm\{\enhancedStep[y][n-1][n]\}$, then the above argument again applies.

\vspace{0.2cm}

Finally, we prove that there exists no other coherent paths of size $n+1$.

\underline{Case $(i)$} If the last step of $\c L'$ is $\enhancedStep[x][n][n-1]$, then the 3 claimed $\c L$ are the only diagonal-avoiding lattice paths whose restriction is $\c L'$ (as lattice paths must be North-East increasing).

\underline{Case $(ii)$} If $\c L$ restrict to $\c L'$ whose last steps are $\enhancedStep[x][n][y_1]$, $\enhancedStep[y_1][y_2][n],\dots, \enhancedStep[y_{m-1}][y_m][n]$ with $x < n-1$, $m \geq 3$ and $y_1 < \dots < y_m = n-1$, then consider the last step of the form $\enhancedStep[i][n+1]$ in $\c L$.
Either $a = n$ and $\c L$ is necessarily in cases $(a)$ or $(b)$;
or $(i,a) \in \{(x,y_1)\}\cup\{(n,y_p)\}_{p\in[m]}$.
The first possibility leads necessarily to cases $(c)$ and $(d)$, while the latest lead to non-coherent paths, as $\enhancedStep[x][n][y_1]$ would be in $\c L$ and is not mutually coherent with $\enhancedStep[y_p][y_{p+1}][n+1]$ for $p \geq 2$.

\underline{Case $(iii)$}  If the last steps of $\c L'$ are $\enhancedStep[x][n][n-1]$ and $\enhancedStep[y][n-1][n]$, then there are 6 diagonal-avoiding lattice paths whose restriction is $\c L'$ (as lattice paths must be North-East increasing). The only non-coherent one is given by $\c S(\c L) = \bigl(\c S(\c L') \ssm \{\enhancedStep[y][n-1][n]\} \bigr) \cup \{\enhancedStep[n][n+1][y], ~\enhancedStep[y][n-1][n+1], ~\enhancedStep[n-1][n][n+1]\}$, which is not coherent as $\enhancedStep[n][n+1][y]$ and $\enhancedStep[n-1][n][n+1]$ are not mutually coherent (as $y < n-1$).
\end{proof}

Now, we know how to inductively construct all coherent lattice paths: it is time to prove the reciprocal of \Cref{thm:EnhancedStepsCriterion}.
The proof is cumbersome but not difficult: for each of the $12$ cases of \Cref{thm:InductionProcess}, we exhibit a vector $\b\omega$ that captures it.

\begin{theorem}\label{thm:CoherenceIsCoherence}
Coherent paths on $\HypSimplTwo$ are in bijection with coherent lattice paths of size $n$.
\end{theorem}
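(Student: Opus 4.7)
The plan is to prove the two directions of the bijection separately. For the "only if" direction, observe that by \Cref{thm:EnhancedStepsCriterion}, every coherent monotone path on $\HypSimplTwo$ satisfies the enhanced steps criterion, which for $k = 2$ (where each common support is a singleton) is exactly the definition of a coherent lattice path. Combined with the bijection $P \mapsto \scr L(P)$ already established, this shows that coherent monotone paths on $\HypSimplTwo$ map into coherent lattice paths. Thus the remaining content of the theorem is the converse: every coherent lattice path of size $n$ is the image of a coherent monotone path.

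For the converse, I would proceed by induction on $n$, using the restriction operation and the case analysis of \Cref{thm:InductionProcess}. The base case $n = 3$ follows from \Cref{exmp:Coherent32}, since $\HypSimplTwo[3]$ is a simplex and all monotone paths are coherent, and the two diagonal-avoiding lattice paths of size $3$ are both trivially coherent. For the inductive step, suppose every coherent lattice path $\c L'$ of size $n$ is captured by some $\b\omega' \in \R^n$ (equivalently, corresponds to a coherent monotone path on $\HypSimplTwo[n]$ via \Cref{prop:CaptureCriterion}). Let $\c L$ be a coherent lattice path of size $n+1$; by \Cref{thm:InductionProcess}, its restriction $\c L'$ is a coherent lattice path of size $n$, and $\c L$ falls into one of the $12$ described cases.

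For each case, I would construct an explicit $\b\omega \in \R^{n+1}$ capturing $\c L$ by extending $\b\omega'$. The natural Ansatz is $\b\omega = (\omega'_1, \dots, \omega'_n, \omega_{n+1})$, possibly after a mild rescaling of the last few coordinates of $\b\omega'$, with $\omega_{n+1}$ chosen so that the slope $\slope\bigl(\cdot, \supp[\b v] \cup \{n+1\}\bigr)$ at each relevant vertex $\b v$ matches the prescribed next step of $\c L$. Concretely, for each case one reads off the required slope comparisons at the last few enhanced steps from \Cref{prop:CaptureCriterion}, which reduces to a system of linear inequalities on $\omega_{n+1}$ (and a possible perturbation of $\omega_n$, $\omega_{n-1}$); since $c_{n+1}$ is free to be chosen arbitrarily large, these inequalities are always satisfiable. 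For steps that do not involve the new index $n+1$, the inequalities are inherited verbatim from $\b\omega'$ capturing $\c L'$.

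The main obstacle will be the bookkeeping across the $12$ cases of \Cref{thm:InductionProcess}: in cases (ii) and (iii), several of the last steps of $\c L'$ get ``rerouted'' through the new vertex $n+1$, so I must verify both that the modified slopes still satisfy the maximality condition of \Cref{prop:CaptureCriterion} at each intermediate vertex, and that the earlier steps of $\c L'$ remain captured when $\omega_n$ or $\omega_{n-1}$ is perturbed slightly. The natural way to control this is to make the perturbations of order $\epsilon$ and pick $\omega_{n+1}$ of order $1$ with sign depending on the case, so that only the final slopes are affected while the strict inequalities for the earlier steps survive by continuity. Once these $12$ explicit witnesses are written down, \Cref{prop:CaptureCriterion} concludes that the corresponding monotone path on $\HypSimplTwo[n+1]$ is coherent, completing the induction.
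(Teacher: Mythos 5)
Your proposal follows essentially the same route as the paper: the forward direction via \Cref{thm:EnhancedStepsCriterion} (plus injectivity of $\scr L$), and the converse by induction on $n$ using the restriction and twelve-case analysis of \Cref{thm:InductionProcess}, exhibiting for each case an explicit $\b\omega \in \R^{n+1}$ that extends the $\b\omega'$ capturing the restricted path. The paper carries out exactly this plan, with $\b\omega = (\omega'_1,\dots,\omega'_n,\omega_{n+1})$ and a deliberate lowering of $\omega_n$ in a few sub-cases, organized around the observation that the new slope $\slope(\step[n][n+1])$ is an increasing function of $\omega_{n+1}$, and that each case corresponds to one of a small number of consecutive regimes determined by the slopes of the last few steps of $\c L'$.

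Two small inaccuracies in your sketch are worth flagging, though they do not derail the strategy. First, $c_{n+1}$ is \emph{not} a free parameter: $\b c$ is fixed (with $c_1 < \cdots < c_{n+1}$) as part of the linear program $(\HypSimplTwo[n+1],\b c)$, and the result must hold for any such generic $\b c$, not just for $c_{n+1}$ large. The satisfiability you observe comes purely from the freedom in $\omega_{n+1}$ (and an occasional adjustment of $\omega_n$), because the required slope constraints stack into a chain $\slope(\cdot) < \slope(\step[n][n+1]) < \slope(\cdot)$ in each case; the paper makes this regime structure explicit. Second, the paper never needs to perturb $\omega_{n-1}$, only $\omega_n$, and its perturbation argument is not an $\epsilon$-continuity argument but a monotone sweep: lowering $\omega_n$ affects only the last triangle because no other point of the form $(i,n)$ lies on $P$ in those sub-cases. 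Your ``order $\epsilon$ / order $1$'' heuristic would likely work too, but the paper's sweep is cleaner to verify. Finally, note that the base case is naturally taken at $n=3$ \emph{and} $n=4$ via \Cref{exmp:SmallDApaths}, since \Cref{thm:InductionProcess} is stated for $n \geq 3$ and the three main cases only become populated starting from small sizes.
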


\begin{proof}[Proof of \Cref{thm:CoherenceIsCoherence}]
\Cref{thm:EnhancedStepsCriterion} proves that the application $\scr L$ sends injectively coherent paths on $\HypSimplTwo$ to coherent lattice paths of size $n$.
We now prove the converse: if $\c L$ is a coherent lattice path of size $n$, then there exists a coherent path $P$ on $\HypSimplTwo$ such that $\scr L(P) = \c L$.
We use the induction process of \Cref{thm:InductionProcess}.
Thanks to \Cref{exmp:SmallDApaths}, we know that all coherent lattice paths of size $3$ and $4$ are coherent paths on $\HypSimplTwo[3]$ and $\HypSimplTwo[4]$.
Next, we prove that if $\c L' = \scr L(P')$ for~$\c L'$ a coherent lattice path of size $n$, then for all coherent path $\c L$ of size $n+1$ which restrict to $\c L'$ (in all 12 cases of \Cref{thm:InductionProcess}), there is a coherent path $P$ on $\HypSimplTwo[n+1]$ such that $\c L = \scr L(P)$.

Let $\c L'$ be a coherent lattice path of size $n$ such that $\c L' = \scr L(P')$ where $P'$ is a coherent path and $\b\omega'\in\R^n$ captures $P'$.
We are going to find $\omega_{n+1}$ such that $\b\omega := (\omega'_1, \dots, \omega'_n, \omega_{n+1})\in\R^{n+1}$ captures a path $P$ with $\scr L(P) = \c L$ (in some cases, we will also modify $\omega_n$ slightly).
We denote $\slope(\step) = \frac{\omega_j-\omega_i}{c_j-c_i}$ as usual.
As we will focus the behavior of the points $(c_i+c_j, \omega_i+\omega+_j)$ for $(i,j)\in\c L$, 
in order to ease notation, we say “the point $(i,j)$" instead of “the point $(c_i+c_j, \omega_i+\omega_j)$".
We will distinguish three cases following the main cases of \Cref{thm:InductionProcess}.

\underline{Case $(i)$} Suppose the last step of $\c L'$ is $\enhancedStep[x][n][n-1]$ with $x < n-1$.
Remark that for $i\notin\{x,n-1,n\}$:\vspace{-0,2cm}
$$\slope(\step[n-1][n]) < \slope(\step[x][n]) < \slope(\step[i][n])$$

\vspace{-0,2cm}Indeed, the first inequality comes directly from the last step of $\c L'$: if the inequality were reversed, the last step would have been $\enhancedStep[n-1][n][x]$ instead.
The second inequality follows from \Cref{lem:ConvexityLemma} applied to the triangle $(x,n-1)$, $(i,n-1)$, $(n,n-1)$.

Note first that, for $i < n$, if $\omega_{n+1}$ satisfies that $\slope(\step[n][n+1]) < \slope(\step[i][n])$, then there can not be a step $\enhancedStep[i][n+1][a]$ in $\c S(P)$ as $\slope(\step[i][n+1]) < \slope(\step[i][n])$ by \Cref{lem:ConvexityLemma}, thus the points associated to $(i,n+1)$ are all below the path $P'$ and do not belong to $P$.
As $\slope(\step[n][n+1])$ is a continuous increasing function of $\omega_{n+1}$, this gives three regimes.

\emph{Case $(i)(a)$:}
To obtain $P$ with $\scr L(P)$ of form $(i)(a)$ in \Cref{thm:InductionProcess}, 
choose $\omega_{n+1}$ small enough to get $\slope(\step[n][n+1]) < \slope(\step[n-1][n])$, then the path $P$ captured by $\b \omega$ has $\c S(P')\subset \c S(P)$ by the above, and $\enhancedStep[n-1][n+1][n]\in \c S(P)$ by \Cref{lem:ConvexityLemma} on the triangle $(n,n-1)$, $(n-1,n+1)$, $(n,n+1)$.

\emph{Case $(i)(b)$:}
To obtain $P$ with $\scr L(P)$ of form $(i)(b)$ in \Cref{thm:InductionProcess},
choose $\omega_{n+1}$ to satisfy $\slope(\step[n-1][n]) < \slope(\step[n][n+1]) < \slope(\step[x][n])$, then the path $P$ captured by $\b \omega$ has  $\c S(P')\subset \c S(P)$ by the above,
and $\enhancedStep[n][n+1][n-1]\in \c S(P)$ and $\enhancedStep[n-1][n][n+1]\in \c S(P)$ by applying \Cref{lem:ConvexityLemma} to the triangle $(n,n-1)$, $(n-1,n+1)$, $(n,n+1)$.

\emph{Case $(i)(c)$:}
To obtain $P$ with $\scr L(P)$ of form $(i)(c)$ in \Cref{thm:InductionProcess},
choose $\omega_{n+1}$ to satisfy $\slope(\step[x][n]) < \slope(\step[n][n+1]) < \min_{i\notin\{x,n\}}\slope(\step[i][n])$, then the path $P$ captured by $\b \omega$ has $\c S(P')\ssm\{\enhancedStep[x][n][n-1]\}\subset \c S(P)$ by the above,
and $\enhancedStep[x][n+1][n-1]\in \c S(P)$ and $\enhancedStep[n-1][n][n+1]\in \c S(P)$.

We have shown that if $\c L'$ is in the case $(i)$, then all the paths $\c L$ that restrict to $\c L'$ are of the form $\c L = \scr L(P)$ for some coherent $P$.

\underline{Case $(ii)$} Suppose the last steps of $\c L'$ are $\enhancedStep[x][n][y_1], \enhancedStep[y_1][y_2][n], \dots, \enhancedStep[y_{m-1}][n-1][n]$ with $m\geq3$. Then for all $i\notin \{x,y_1,n\}$, by the same argument as before:\vspace{-0.2cm}
$$\slope(\step[y_{m-1}][n-1]) < \slope(\step[y_{m-2}][y_{m-1}]) < \dots < \slope(\step[y_1][y_2]) <  \slope(\step[x][n]) < \slope(\step[i][n])$$

We distinguish three regimes.

\emph{Case $(ii)(a)$:}
To obtain $P$ with $\scr L(P)$ of form $(ii)(a)$ in \Cref{thm:InductionProcess},
choose $\omega_{n+1}$ small enough to satisfy $\slope(\step[n][n+1]) < \slope(\step[y_{m-1}][n-1])$, then the path $P$ captured by $\b\omega$ has $\c S(P')\subset \c S(P)$, and $\enhancedStep[n-1][n+1][n]\in\c S(P)$ by \Cref{lem:ConvexityLemma} on the triangle $(n,n-1)$, $(n-1,n+1)$ and $(n,n+1)$.

\emph{Case $(ii)(b)$:}
To obtain $P$ with $\scr L(P)$ of form $(ii)(b)$ in \Cref{thm:InductionProcess},
choose $\omega_{n+1}$ to satisfy $\slope(\step[y_{m-1}][n-1]) < \slope(\step[n][n+1]) < \slope(\step[y_{m-2}][y_{m-1}])$, then the path $P$ captured by $\b\omega$ has $\c S(P')\ssm\{\enhancedStep[y_{m-1}][n-1][n]\}\subset \c S(P)$, 
and $\enhancedStep[y_{m-1}][n+1][n]\in\c S(P)$ because applying \Cref{lem:ConvexityLemma} to the triangle $(y_{m-1},n)$, $(n,n-1)$ and $(n-1,n+1)$ gives that $\slope(\step[n-1][n+1]) < \slope(\step[n-1][n])$,
and applying it to $(y_{m-1},n)$, $(y_{m-1},n+1)$ and $(n,n+1)$ gives that $\slope(\step[y_{m-1}][n+1]) > \slope(\step[n][n+1])$.

\emph{Case $(ii)(d)$:}
To obtain $P$ with $\scr L(P)$ of form $(ii)(d)$ in \Cref{thm:InductionProcess},
choose $\omega_{n+1}$ to satisfy $\slope(\step[x][n]) < \slope(\step[n][n+1]) < \min_{i\notin\{x,n\}}\slope(\step[i][n])$, then the path $P$ captured by $\b\omega$ has $\c S'\subset \c S(P)$,
and by applying \Cref{lem:ConvexityLemma} to the triangle $(x,y_1)$, $(n,y_1)$ and $(n+1,y_1)$, one gets that $\enhancedStep[x][n+1][y_1]\in\c S(P)$.
Moreover, the projected path $\bigl((n+1,y_1),(n+1,y_2), \dots, (n+1,y_{m-1}),(n+1,n-1)\bigr)$ is parallel and higher than the projected path $\bigl((n,y_1),(n,y_2), \dots, (n,y_{m-1}),(n,n-1)\bigr)$, thus $\enhancedStep[y_i][y_{i+1}][n+1]\in\c S(P)$ for $i\in [1,m-2]$.
As $\slope(\step[y_{m-1}][n-1]) < \slope(\step[x][n]) \leq \slope(\step[n-1][n])$ in $P'$, \Cref{lem:ConvexityLemma} ensures that $\enhancedStep[y_{m-1}][n][n+1] \in \c S(P)$.

\emph{Case $(ii)(c)$:}
To obtain $P$ with $\scr L(P)$ of form $(ii)(c)$ in \Cref{thm:InductionProcess},
note that in the previous sub-case there is no point $(i,n)$ in $P$ except from $(n,n+1)$.
So lowering the value of $\omega_n$ (with the same fixed $\omega_{n+1}$ as in the previous sub-case) will not affect the path except in the last triangle $(y_{m-1},n+1)$, $(n-1,n+1)$, $(n,n+1)$.
Taking $\omega_n$ low enough to satisfy $\slope(\step[y_{m-1}][n-1]) > \slope(\step[n-1][n])$, we obtain a path $\Tilde{P}$ with $\c S(P)\ssm\{\enhancedStep[y_{m-1}][n][n+1]\}\subset \c S(\Tilde{P})$ and $\{\enhancedStep[y_{m-1}][n-1][n+1],~ \enhancedStep[n-1][n][n+1]\}\in\c S(\Tilde{P})$.

\underline{Case $(iii)$} Suppose that the last steps of $\c L'$ are $\enhancedStep[x][y][n]$, $\enhancedStep[y][n-1][n]$. Then for $i\notin\{x,y,n\}$:\vspace{-0.2cm}
$$\slope(\step[y][n-1]) < \slope(\step[y][n]) < \slope(\step[x][n]) < \slope(\step[i][n])$$

\vspace{-0.2cm}Indeed, $\slope(\step[y][n]) < \slope(\step[x][n])$ otherwise $\enhancedStep[x][n][y]\notin\c S(P')$, and $\slope(\step[y][n-1]) < \slope(\step[y][n])$ as already $\slope(\step[y][n-1]) < \slope(\step[n-1][n])$.

\emph{Case $(iii)(a)$:}
To obtain $P$ with $\scr L(P)$ of form $(iii)(a)$ in \Cref{thm:InductionProcess},
choose $\omega_{n+1}$ small enough to satisfy $\slope(\step[n][n+1]) < \slope(\step[y][n-1])$, then $\c S(P')\subset \c S(P)$ and $\enhancedStep[n-1][n+1][n]\in\c S(P)$.

\emph{Case $(iii)(b)$:}
To obtain $P$ with $\scr L(P)$ of form $(iii)(b)$ in \Cref{thm:InductionProcess},
choose $\omega_{n+1}$ to satisfy $\slope(\step[y][n-1]) < \slope(\step[n][n+1]) < \slope(\step[y][n])$, then $\c S(P')\ssm\{\enhancedStep[y][n-1][n]\}\subset \c S(P)$,
and as $\slope(\step[n][n+1]) < \slope(\step[y][n])$, \Cref{lem:ConvexityLemma} ensures $\enhancedStep[y][n+1][n]\in\c S(P)$.

\emph{Case $(iii)(c)$:}
To obtain $P$ with $\scr L(P)$ of form $(iii)(c)$ in \Cref{thm:InductionProcess},
choose $\omega_{n+1}$ to satisfy $\slope(\step[y][n]) < \slope(\step[n][n+1]) < \slope(\step[x][n])$, then $\c S(P')\ssm\{\enhancedStep[y][n-1][n]\}\subset \c S(P)$,
and as $\slope(\step[n][n+1]) > \slope(\step[y][n])$, \Cref{lem:ConvexityLemma} ensures $\{\enhancedStep[n][n+1][y], ~\enhancedStep[y][n][n+1]\}\subset\c S(P)$.

\emph{Case $(iii)(e)$:}
To obtain $P$ with $\scr L(P)$ of form $(iii)(e)$ in \Cref{thm:InductionProcess},
choose $\omega_{n+1}$ to satisfy $\slope(\step[x][n]) < \slope(\step[n][n+1]) < \min_{i\notin\{x,n\}} \slope(\step[i][n])$, then $\c S'\cup\{\enhancedStep[x][n+1][y]\}\subset \c S(P)$,
and $\enhancedStep[y][n+1][n]\in\c S(P)$ as $\slope(\step[y][n-1]) < \slope(\step[n-1][n])$.

\emph{Case $(iii)(d)$:}
To obtain $P$ with $\scr L(P)$ of form $(iii)(d)$ in \Cref{thm:InductionProcess},
from the previous value of $\omega_{n+1}$, we lower the value of $\omega_n$ until $\slope(\step[y][n-1]) > \slope(\step[n-1][n])$.
As no other point of the form $(i,n)$ belongs to $P$, this new $\b\omega$ captures a path $\Tilde{P}$ with $\c S(P) \ssm\{\enhancedStep[y][n][n+1]\} \subset\c S(\Tilde{P})$ and $\{\enhancedStep[y][n-1][n+1], ~\enhancedStep[n-1][n][n+1]\}\subset \c S(\Tilde{P})$.

In all 12 cases, we proved that if the restriction of $\c L$ is the image by $\scr L$ of a coherent path on $\HypSimplTwo[n-1]$, then $\c L$ is the image by $\scr L$ of a coherent path on $\HypSimplTwo$.
Thus $\scr L$ is surjective.
\end{proof}

\begin{corollary}\label{cor:EnhancedStepCriterionIsCoherence}
For any generic direction $\b c$, a $\b c$-monotone path $P$ on $\HypSimplTwo$ is coherent if and only if it satisfies the enhanced steps criterion:
for all couples of enhanced steps $\enhancedStep \prec \enhancedStep[x][y][z] \in \c S(P)$ with $x < j$, one has $j = z$ or $x = a$.
\end{corollary}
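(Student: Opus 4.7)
\medskip

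My plan is to observe that the corollary is essentially a bookkeeping combination of the two main results already in place, namely Theorem \ref{thm:EnhancedStepsCriterion} for the necessity, and Theorem \ref{thm:CoherenceIsCoherence} (via Theorem \ref{thm:InductionProcess}) for the sufficiency. The only subtlety is to unwind the definitions so that the criterion stated for monotone paths on $\HypSimplTwo$ matches word-for-word with the defining condition of a \emph{coherent lattice path}.

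First, I would handle the forward implication directly by quoting Theorem \ref{thm:EnhancedStepsCriterion}: if $P$ is coherent then for every pair $\enhancedStep[i][j][A] \prec \enhancedStep[x][y][Z]$ in $\c S(P)$ with $x < j$, one has $x \in A$ or $j \in Z$. Since $k = 2$, the sets $A$ and $Z$ are singletons, say $A = \{a\}$ and $Z = \{z\}$, so the condition becomes precisely $x = a$ or $j = z$, which is the enhanced steps criterion as stated in the corollary.

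Next, for the reverse implication, I would use the bijection $P \mapsto \scr L(P)$ between $\b c$-monotone paths on $\HypSimplTwo$ and diagonal-avoiding lattice paths of size $n$ and dimension $2$, together with the fact that $\c S(\scr L(P)) = \c S(P)$. Under this identification, the enhanced steps criterion on $P$ is exactly the defining condition of a coherent lattice path for $\scr L(P)$: the pairs $(\enhancedStep, \enhancedStepx)$ that must be checked are the same, and the singleton labels $a$ and $z$ on the steps are preserved by the bijection. Hence if $P$ satisfies the criterion, then $\scr L(P)$ is a coherent lattice path.

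Finally, I would invoke Theorem \ref{thm:CoherenceIsCoherence}, which says that coherent lattice paths of size $n$ are precisely the images under $\scr L$ of the coherent monotone paths on $\HypSimplTwo$. Thus $\scr L(P)$ being a coherent lattice path means there is a coherent monotone path $P'$ with $\scr L(P') = \scr L(P)$, and by injectivity of $\scr L$ we conclude $P = P'$, so $P$ is coherent. I do not expect any genuine obstacle here: all the real work was done in Theorems \ref{thm:EnhancedStepsCriterion}, \ref{thm:InductionProcess} and \ref{thm:CoherenceIsCoherence}, and this corollary simply repackages them in the language of paths on $\HypSimplTwo$.
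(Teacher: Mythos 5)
Your proposal is correct and takes essentially the same approach as the paper, whose proof is the one-line ``Combine \Cref{thm:CoherenceIsCoherence} (for ``if'') and \Cref{thm:EnhancedStepsCriterion} (for ``only if'').'' You merely unwind the definitions (singletons $A=\{a\}$, $Z=\{z\}$ when $k=2$, and $\c S(\scr L(P)) = \c S(P)$) a bit more explicitly, which is fine and adds no new content.
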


\begin{proof}[Proof of \Cref{cor:EnhancedStepCriterionIsCoherence}]
Combine \Cref{thm:CoherenceIsCoherence} (for ``if'') and \Cref{thm:EnhancedStepsCriterion} (for ``only if'').
\end{proof}

\begin{remark}\label{rmk:NormalFanOfMn2}
From the details of the proof, the reader can retrieve the inductive construction of the normal fan of the monotone path polytope $\MPPHypSimplTwo$.
Indeed, for each case, we provided the inequalities that $\b \omega$ shall respect for the path of size $n+1$, in term of the inequalities that shall be respected for its restriction (and then we created such an $\b\omega$).

Yet, this neither seems to yield an easy description of the face lattice of the monotone path polytope $\MPPHypSimplTwo$, nor a clear way to count the number of faces of any dimension apart from the vertices.
If one wants to compute the normal fan of the monotone path polytope $\MPPHypSimplTwo$, another (and maybe better) way to do so it to use \Cref{prop:CaptureCriterion} directly: the combinatorics of a monotone path is directly associated to a collection of inequalities, forming a polyhedral cone, and coherent monotone paths are exactly the ones for which this cone is non-empty.
Finding the irredundant inequalities among this collection remains a difficult task.
\end{remark}

\newpage
\section{Counting the number of coherent monotone paths on $\HypSimplTwo$}\label{ssec:NumberVertMPPHypSimplTwo}
\subsection{Counting all coherent paths}\label{ssec:CountingTotal}

The induction process of \Cref{thm:InductionProcess} allows us to count precisely the number of coherent lattice paths, which is the number of vertices of $\MPPHypSimplTwo$ by \Cref{thm:CoherenceIsCoherence}.
We gather the coherent lattice paths of size $n$ into 3 groups, matching the 3 main cases of \Cref{thm:InductionProcess}: cases $(i)$, $(ii)$, $(iii)$.

Let $t_n$ (respectively $q_n$ ; respectively $c_n$) be the number\footnote{The letters ``$t$'', ``$q$'' and ``$c$'' stand for \emph{trois}, \emph{quatre} and \emph{cinq} (3, 4 and 5 in French)} of coherent paths $\c L$ of size $n$ of type~$(i)$, that is to say whose last step is $\enhancedStep[x][n][n-1]$ (respectively of type $(ii)$, that is to say whose last steps are $\enhancedStep[x][n][y_1],\enhancedStep[y_1][y_2][n],\dots,\enhancedStep[y_{m-1}][n-1][n]$ with $m\geq3$ ; respectively of type $(iii)$, that is to say whose last steps are $\enhancedStep[x][n][y], \enhancedStep[y][n-1][n]$).
The induction process of \Cref{thm:InductionProcess} gives:

\begin{proposition}\label{prop:RecursiveFormula}
The sequences $t_n$, $q_n$ and $c_n$ satisfy the following recursive formula:
$$\text{for all } n\geq 4,~ \begin{pmatrix}t_{n+1} \\ q_{n+1} \\ c_{n+1}\end{pmatrix} = M\begin{pmatrix}t_n \\ q_n \\ c_n\end{pmatrix} ~~~~~\text{with}~~~~ M = \begin{pmatrix}
    1&2&2 \\ 0&2&1 \\ 2&0&2
\end{pmatrix} ~~~~\text{and}~~~~~ \begin{pmatrix}t_4 \\ q_4
\\ c_4\end{pmatrix} = \begin{pmatrix} 3 \\ 1 \\ 4 \end{pmatrix}.$$
\end{proposition}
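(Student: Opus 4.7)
The plan is a direct bookkeeping exercise built on top of \Cref{thm:InductionProcess}: the restriction map sends every coherent lattice path of size $n+1$ to a (unique) coherent lattice path of size $n$, and the theorem enumerates, for each of the three types $(i)$, $(ii)$, $(iii)$ of the restriction, all the possible extensions to size $n+1$ (3, 4, 5 extensions respectively, giving $3+4+5 = 12$ cases in total). Since these extensions partition the set of coherent lattice paths of size $n+1$ according to their restrictions, I get
$$t_{n+1} + q_{n+1} + c_{n+1} = 3\,t_n + 4\,q_n + 5\,c_n,$$
and more precisely I can sort each of the 12 extensions by the type of the size-$(n+1)$ path it produces.

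The core work is to determine, in each of the 12 cases, whether the produced path of size $n+1$ ends in the pattern of type $(i)$ (a single last step $\enhancedStep[x][n+1][n]$ with $x<n$), of type $(ii)$ (a chain of length $\geq 3$ of the form $\enhancedStep[x][n+1][y_1],\enhancedStep[y_1][y_2][n+1],\dots,\enhancedStep[y_{m-1}][n][n+1]$) or of type $(iii)$ (exactly two steps $\enhancedStep[x][n+1][y],\enhancedStep[y][n][n+1]$). This is mechanical: for each of the 12 cases I just read off the last steps added in \Cref{thm:InductionProcess}. For instance, the three extensions $(i)(a),(i)(b),(i)(c)$ from a type-$(i)$ restriction produce respectively a path of type $(i)$ (the added step $\enhancedStep[n-1][n+1][n]$ is a type-$(i)$ last step), a path of type $(iii)$ (added steps $\enhancedStep[n][n+1][n-1],\enhancedStep[n-1][n][n+1]$ match type $(iii)$ with $y=n-1$), and a path of type $(iii)$ (added steps $\enhancedStep[x][n+1][n-1],\enhancedStep[n-1][n][n+1]$). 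Hence type $(i)$ contributes $(1,0,2)$ to the triple $(t_{n+1},q_{n+1},c_{n+1})$, which is the first column of $M$. Doing the same accounting for the four cases from type $(ii)$ gives the column $(2,2,0)$ (cases $(a),(b)$ end by a single step $\enhancedStep[\cdot][n+1][n]$; cases $(c),(d)$ extend the existing chain into a longer chain of length $\geq 3$ ending with $\enhancedStep[\cdot][n][n+1]$), and for the five cases from type $(iii)$ the column $(2,1,2)$ (cases $(a),(b)$ are type $(i)$; case $(d)$ inserts a chain of length exactly $3$, hence type $(ii)$; cases $(c),(e)$ end by the two-step pattern of type $(iii)$).

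The remaining ingredient is the base case at $n=4$: I enumerate the coherent lattice paths of size $4$ (by \Cref{exmp:Coherent42,exmp:SmallDApaths} there are exactly $8$ of them, out of the $10$ diagonal-avoiding lattice paths listed in \Cref{fig:SmallDApaths}) and classify each according to the form of its last step(s). The three with last step $\enhancedStep[x][4][3]$ and $x\in\{1,2\}$ give $t_4=3$; the unique chain $\enhancedStep[2][4][1],\enhancedStep[1][2][4],\enhancedStep[2][3][4]$ gives $q_4=1$; and the four remaining coherent paths, all of whose last two steps fit the type-$(iii)$ pattern $\enhancedStep[x][4][y],\enhancedStep[y][3][4]$ with $y<3$, give $c_4=4$.

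The only real obstacle is making sure no case of \Cref{thm:InductionProcess} has been miscategorised; to double-check, I would note that the row sums of $M$ are $5,3,4$ and the column sums are $3,4,5$, matching exactly the numbers of cases per type in the theorem ($3$ extensions from type $(i)$, $4$ from type $(ii)$, $5$ from type $(iii)$). Once the matrix entries are verified case-by-case and the three base values are confirmed, the proposition follows immediately by a single application of \Cref{thm:InductionProcess} combined with the partition into types.
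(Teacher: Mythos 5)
Your proof is correct and follows essentially the same route as the paper: classify each of the twelve extension cases in \Cref{thm:InductionProcess} by the type of the resulting size-$(n+1)$ path to read off the columns of $M$, and obtain the base case $(t_4,q_4,c_4)=(3,1,4)$ from the $8$ coherent lattice paths of size $4$. The case-by-case classification you give agrees with the paper's table, and the column-sum check against the number of extensions per type is a nice sanity check (though the row sums being $\{5,3,4\}$ is just a coincidence with no structural meaning).
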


\begin{proof}
The values for $t_4$, $q_4$ and $c_4$ follow from \Cref{exmp:SmallDApaths}.

Looking at the induction process in \Cref{thm:InductionProcess}, for each case $(i)(a)$ to $(iii)(e)$, one can identify if the created coherent path of size $n+1$ is of the type of case $(i)$, $(ii)$ or $(iii)$.
For example, if $\c L'$ of size $n$ ends by a step $\enhancedStep[x][n][n-1]$, then there are three $\c L$ of size $n+1$ that restrict to $\c L'$: in case $(i)(a)$, $\c L$ ends with $\enhancedStep[y][n+1][n]$ (with $y = n-1$) so it belongs to type $(i)$.
The case analysis is summarized in the following table (reading off the table gives the matrix $M$):

\begin{center}
\begin{tabular}{c|ccccc}
     & $(a)$ & $(b)$ & $(c)$ & $(d)$ & $(e)$ \\ \hline
     $(i)$ & $(i)$ & $(iii)$ & $(iii)$ & & \\
     $(ii)$ & $(i)$ & $(i)$ & $(ii)$ & $(ii)$ & \\
     $(iii)$ & $(i)$ & $(i)$ & $(iii)$ & $(ii)$ & $(iii)$ \\
\end{tabular}
\end{center} \vspace{-0.66cm}\qedhere
\end{proof}

\begin{theorem}\label{thm:NumberVertsMPPHypSimplTwo}
For $n\geq 4$, there are $\frac{1}{3}\bigl(25\cdot 4^{n-4} - 1\bigr)$ coherent paths of size $n$.
\end{theorem}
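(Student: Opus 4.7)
The plan is to extract from the $3\times 3$ recursion of \Cref{prop:RecursiveFormula} a scalar affine recurrence for the total $V_n \eqdef t_n + q_n + c_n$, which by \Cref{thm:CoherenceIsCoherence} counts the vertices of $\MPPHypSimplTwo$. A first check shows that summing the rows of $M \cdot (t_n, q_n, c_n)^T$ only yields $V_{n+1} = 3 t_n + 4 q_n + 5 c_n$, which is not a multiple of $V_n$, so the reduction to a one-variable recurrence is not immediate.

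The technical heart of the argument will be to exhibit two linear invariants of the map defined by $M$ so as to eliminate two of the three variables. The characteristic polynomial of $M$ is $-\lambda(\lambda-1)(\lambda-4)$, and the eigenvalues $1$ and $0$ of $M^T$ translate into the two identities
$$c_{n+1} - t_{n+1} = t_n - 2 q_n \qquad \text{and} \qquad t_{n+1} - 2 q_{n+1} = t_n - 2 q_n,$$
both verifiable in one line from the entries of $M$. Since the base values $(t_4, q_4, c_4) = (3, 1, 4)$ satisfy $c_4 - t_4 = 1$ and $t_4 - 2 q_4 = 1$, a straightforward induction then gives
$$c_n - t_n = 1 \qquad \text{and} \qquad t_n - 2 q_n = 1 \qquad \text{for all } n \geq 4.$$

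These invariants let me substitute $q_n = (t_n - 1)/2$ and $c_n = t_n + 1$ to obtain $V_n = (5 t_n + 1)/2$ and, from the row-sum identity above, $V_{n+1} = 10 t_n + 3$; eliminating $t_n$ collapses these into the affine scalar recurrence
$$V_{n+1} = 4 V_n + 1.$$
Rewriting this as $V_{n+1} + \tfrac{1}{3} = 4 \bigl(V_n + \tfrac{1}{3}\bigr)$ and iterating from $V_4 = 3 + 1 + 4 = 8$ produces $V_n + \tfrac{1}{3} = 4^{n-4} \cdot \tfrac{25}{3}$, which is precisely the claimed formula $V_n = \tfrac{1}{3}(25 \cdot 4^{n-4} - 1)$.

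The only step that needs any real insight is guessing the two invariants $c_n - t_n$ and $t_n - 2 q_n$; once they are identified, the remainder is bookkeeping. An equivalent but more mechanical alternative would be to diagonalize $M$ over $\Q$ using its three distinct eigenvalues $0$, $1$, $4$ and fit a closed form of shape $\alpha \cdot 4^n + \beta + \gamma \cdot 0^n$ to the three initial values $V_4 = 8$, $V_5 = 33$, $V_6 = 133$, which would lead to the same answer but obscure why the recurrence takes such a simple scalar form.
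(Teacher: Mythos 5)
Your proof is correct and takes essentially the same approach as the paper: both exploit the eigenvalues $0$ and $1$ of $M$ (the paper via the left null vector $(2,-2,-1)$ together with the induction $c_n = t_n+1$; you via the two equivalent invariants $t_n - 2q_n = 1$ and $c_n - t_n = 1$) to collapse the $3\times 3$ recursion to the scalar affine recurrence $V_{n+1} = 4V_n + 1$, then iterate from $V_4 = 8$.
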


\begin{proof}
By definition, the total number of coherent paths of size $n$ is $t_n+q_n+c_n$.

A quick analysis of $M$ shows that $\text{Sp}(M) = \{0,1,4\}$ with $(2,-2,-1)M = (0,0,0)$.
Thus for all $n$: $2t_n = 2q_n + c_n$.
It follows that if $c_n = t_n+1$, then $t_n = 2q_n+1$, and thus
$c_{n+1} = 2t_n+2c_n = t_n+2q_n+2c_n+1 = t_{n+1}+1$.
By induction: for $n\geq 4$, $c_n = t_n+1$.
Hence $t_{n+1}+q_{n+1}+c_{n+1} = 4(t_n+q_n+c_n) +1$.
Using $t_4+q_4+c_4 = 8$, we conclude.
\end{proof}

\begin{remark}\label{rmk:0percentOfMonotonePaths}
One may wonder if the number of \textit{coherent} monotone paths on $\HypSimplTwo$ is ``big'' or not.
It grows exponentially according to the dimension (\ie as $4^n$), but we would like to compare it to the total number of monotone paths on $\HypSimplTwo$.
The latter is hard to compute, so we instead give a lower bound, thanks to our induction process: we want a cheap estimate for the total number of diagonal-avoiding paths.

As we want to keep it concise, we invite the reader to do its own drawings.
Let $d_n$ be the number of diagonal-avoiding paths such that the last enhanced step is $\enhancedStep[x][n][n-1]$, and let $s_n$ be the number of diagonal-avoiding paths that do not\footnote{The letters ``$d$'' and ``$s$'' stand for \emph{Drei} and \emph{Sechs} (3 and 6 in German).}.
Mimicking the induction process of \Cref{thm:InductionProcess}(i), it is easy to see that for each path finishing by $\enhancedStep[x][n][n-1]$, there are exactly 3 paths of size $n+1$ which restrict to it: 1 of them finishing by $\enhancedStep[y][n+1][n]$.
On the other hand, for each path that does not finish by $\enhancedStep[x][n][n-1]$, there are \textbf{at least} 6 paths of size $n+1$ which restrict to it: 2 of them finishing by $\enhancedStep[y][n+1][n]$. The 4 others are obtained by transforming all $\enhancedStep[y_i][y_{i+1}][n]$ into $\enhancedStep[y_i][y_{i+1}][n+1]$ and choosing whether to include the lattice points $(n, x)$ and $(n+1, n-1)$.

We are far from counting all the diagonal-avoiding paths (for instance, the $10^{\text{th}}$ path on the bottom right of \Cref{fig:SmallDApaths} is not counted), but it is enough.
This proves $d_{n+1} = d_n + 2 s_n$ and $s_{n+1} \geq 2 d_n + 4 s_n$.
Thus $s_{n+1} \geq 2 d_{n+1}$, hence $d_{n+1} \geq d_n + 2(2d_n) = 5 d_n$.
Finally, the total number of monotone paths on $\HypSimplTwo$ grows faster than $5^n$ (as already $d_n$ does): asymptotically, the number of coherent monotone paths is negligible within the total number of monotone paths.

\vspace{0,15cm}

Interestingly two behaviors seem to emerge.
In some cases, for families of polytopes like cubes and simplices \cite{BilleraSturmfels-FiberPolytope}, and matroid polytopes (for non-generic directions $\b e_S$) \cite[Theorem 8.1]{BlackSanyal-FlagPolymatroids}, all monotone paths are coherent.
On the other side, for cyclic polytopes \cite[Remark 3.6]{DeLoeraRambauSantos}, cross-polytopes \cite[last paragraph]{BlackDeLoera2021monotone}, standard permutahedra \cite[Theorem 1.2]{AngelGorinHolroyd-PatternTheorem}, and for $\HypSimplTwo$, the ratio between number of coherent monotone paths and the number of monotone paths goes to $0$ as the dimension grows (in most cases, exponentially fast).
It would be intriguing to have a family of polytopes (endowed with some combinatorics) such that the number of coherent paths represent a non-zero fraction of the number of monotone paths, if the dimension grows.
\end{remark}

\subsection{Counting according to length}\label{ssec:CountingByLengths}

The length of (coherent) monotone paths, that is to say the number of vertices in the path, is a crucial parameter.
On the one hand, in linear programming, the lengths of coherent monotone paths govern the complexity of the shadow vertex rule for the simplex algorithm.
On the other hand, the (non-coherent) monotone paths of maximal length on the hypersimplex $\HypSimpl$ are in bijection with standard Young tableaux of the rectangular shape
$k \times (n - k)$, as pointed out by Postnikov \cite[Example 10.4]{Postnikov-PositiveGrassmannianAndPolyhedralSubdivisons}.
In particular, there are $\frac{1}{n-1}\binom{2(n-2)}{n-2}$ longest monotone paths on $\HypSimplTwo$, with length $2(n-2)$.
These paths form a Catalan family: they are naturally in bijection with Catalan objects (Dyck paths, binary trees, standard Young tableaux of shape $2 \times (n-2)$, triangulations of a $n$-gon...).
The bijection with Dyck paths can easily be seen once the reader realizes that the longest lattice paths has steps of size 1, \ie $\enhancedStep$ with $j = i+1$, and hence stays under that diagonal: turning his or her head by $135^{\circ}$, the reader will see the Dyck path.

However, the paths of length $2(n-2)$ on $\HypSimplTwo$ cannot be coherent.
As we will see in \Cref{thm:LongestCoherentPath}, the maximum length of a coherent monontone path on $\HypSimplTwo$ is $\left\lfloor\frac{3}{2}(n-1)\right\rfloor$.
Indeed, with the formula of \Cref{thm:NumberVertsMPPHypSimplTwo} we solved the question we started with (determining the vertices of $\MPPHypSimplTwo$ and count them), but notwithstanding, one can go even further in the analysis of the induction process.
Let $t_{n,\ell}$ be the number of coherent paths of size $n$ and length $\ell$ that end with a step $\enhancedStep[x][n][n-1]$ 
and let $q_{n,\ell}$, $c_{n,\ell}$ be the counterparts for the two other main cases of the induction.
Let $T_n = \sum_{\ell} t_{n,\ell} z^\ell$, $Q_n = \sum_{\ell} q_{n,\ell} z^\ell$ and $C_n = \sum_{\ell} c_{n,\ell} z^\ell$ be the associated generating polynomials, then \Cref{thm:InductionProcess} gives the following:

\begin{proposition}\label{prop:RecursiveFormulaLength}
The sequences of polynomials $T_n$, $Q_n$ and $C_n$ satisfy the recursive formula:
$$\text{for } n\geq 4,~ \begin{pmatrix}T_{n+1} \\ Q_{n+1} \\ C_{n+1}\end{pmatrix} = \c M\begin{pmatrix}T_n \\ Q_n \\ C_n\end{pmatrix} ~\text{with}~~ \c M = \begin{pmatrix}
    z&1+z&1+z \\ 0&1+z&z \\ z+z^2&0&1+z
\end{pmatrix} ,~ \begin{pmatrix}T_4 \\ Q_4
\\ C_4\end{pmatrix} = \begin{pmatrix} z^4+2z^3 \\ z^4 \\ 2z^4+2z^3 \end{pmatrix}.$$
\end{proposition}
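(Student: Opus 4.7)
The plan is to follow the proof of \Cref{prop:RecursiveFormula} exactly, merely refining the bookkeeping to track lengths. The key observation is that in each of the twelve surgeries of \Cref{thm:InductionProcess} the length increment $\Delta := |\c S(\c L)| - |\c S(\c L')|$ is deterministic, so the passage $\c L' \mapsto \c L$ contributes a factor $z^{\Delta}$ to the appropriate generating polynomial.

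First I would read off $\Delta$ for each sub-case by counting the inserted versus removed steps from the explicit descriptions in \Cref{thm:InductionProcess}. A quick pass gives $\Delta = +2$ in sub-case $(i)(b)$; $\Delta = 0$ in sub-cases $(ii)(b), (ii)(d), (iii)(b), (iii)(e)$; and $\Delta = +1$ in the remaining seven sub-cases. The only mild subtlety is that in $(ii)(c)$ and $(ii)(d)$ both the number of inserted and the number of removed steps depend on the parameter $m \geq 3$, but their difference is independent of $m$ (equal to $+1$ and $0$ respectively). Combining these exponents with the destination types already determined in the proof of \Cref{prop:RecursiveFormula} refines its case-analysis table into
\begin{center}
\begin{tabular}{c|ccccc}
     & $(a)$ & $(b)$ & $(c)$ & $(d)$ & $(e)$ \\ \hline
     $(i)$   & $(i),\,z$ & $(iii),\,z^2$ & $(iii),\,z$ & & \\
     $(ii)$  & $(i),\,z$ & $(i),\,1$     & $(ii),\,z$  & $(ii),\,1$ & \\
     $(iii)$ & $(i),\,z$ & $(i),\,1$     & $(iii),\,z$ & $(ii),\,z$ & $(iii),\,1$
\end{tabular}
\end{center}
where each entry records both the type of the resulting path and the monomial $z^{\Delta}$ to be attached.

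Summing row by row, one collects the contribution of each source type to the three destination polynomials: type $(i)$ contributes $z\,T_n$ to $T_{n+1}$ and $(z+z^2)\,T_n$ to $C_{n+1}$; type $(ii)$ contributes $(1+z)\,Q_n$ to $T_{n+1}$ and $(1+z)\,Q_n$ to $Q_{n+1}$; type $(iii)$ contributes $(1+z)\,C_n$ to $T_{n+1}$, $z\,C_n$ to $Q_{n+1}$, and $(1+z)\,C_n$ to $C_{n+1}$. Assembling these coefficients columnwise yields exactly the claimed matrix $\c M$, and the recursion follows from the bijection (from \Cref{thm:InductionProcess}) between coherent paths of size $n+1$ and pairs (coherent path of size $n$, sub-case).

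For the initial vector at $n = 4$, I would simply re-inspect the eight coherent lattice paths of size $4$ displayed in \Cref{fig:SmallDApaths} (see also \Cref{exmp:Coherent42}), splitting them not only by type but also by length: two type-$(i)$ paths of length $3$ and one of length $4$; one type-$(ii)$ path, necessarily of length $4$; two type-$(iii)$ paths of length $3$ and two of length $4$. This yields $T_4 = z^4 + 2z^3$, $Q_4 = z^4$, and $C_4 = 2z^4 + 2z^3$, consistent at $z = 1$ with $(t_4, q_4, c_4) = (3, 1, 4)$ from \Cref{prop:RecursiveFormula}. The main obstacle is purely clerical: making sure not to miscount $\Delta$ in the $m$-dependent sub-cases and correctly identifying the destination type in each of the twelve surgeries.
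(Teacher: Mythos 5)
Your proposal is correct and follows essentially the same route as the paper: read off the type and the length increment $\Delta$ for each of the twelve sub-cases of \Cref{thm:InductionProcess}, refine the transition table of \Cref{prop:RecursiveFormula} by a factor $z^{\Delta}$ in each entry, and sum to obtain $\c M$, with the base case read off \Cref{exmp:SmallDApaths}. Your computed increments (including the observation that the $m$-dependence cancels in $(ii)(c)$ and $(ii)(d)$) and the resulting table agree exactly with the paper's.
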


\begin{remark}
Note that evaluating the previous relation at $z = 1$ gives back \Cref{prop:RecursiveFormula}.
\end{remark}

\begin{proof}[Proof of \Cref{prop:RecursiveFormulaLength}]
The values for $T_4$, $Q_4$ and $C_4$ have been explored in \Cref{exmp:SmallDApaths}.

Looking at the induction process, for each case $(i)(a)$ to $(iii)(e)$, one can identify the length of the created coherent path of size $n+1$.
For example, if $\c L'$ of size $n$ and length $\ell$ ends by a step $\enhancedStep[x][n][n-1]$, then there are three $\c L$ of size $n+1$ that restricts to $\c L'$: in case $(i)(a)$, $\c L$ contains 1 step more than $\c L'$ so it has length $\ell +1$.
The case analysis is summarized in the following table, assuming the restricted path is of length $\ell$:
\begin{center}
\begin{tabular}{c|ccccc}
     & $(a)$ & $(b)$ & $(c)$ & $(d)$ & $(e)$ \\ \hline
     $(i)$ & $\ell+1$ & $\ell+2$ & $\ell+1$ & & \\
     $(ii)$ & $\ell+1$ & $\ell$ & $\ell+1$ & $\ell$ & \\
     $(iii)$ & $\ell+1$ & $\ell$ & $\ell+1$ & $\ell+1$ & $\ell$ \\
\end{tabular}
\end{center}

Reading off this table together with the one of the proof of \Cref{prop:RecursiveFormula} yields $\c M$.
\end{proof}

The matrix $\c M$ (over the polynomial ring) has three eigenvalues $\lambda_0 = 0$, $\lambda_+ = 1+\frac{3}{2}z + \frac{1}{2}z\sqrt{4z+5}$, and $\lambda_- = 1+\frac{3}{2}z - \frac{1}{2}z\sqrt{4z+5}$ with associated (left) eigenvectors:
$$\b x_0 = \begin{pmatrix}
    -1 & 1 & \frac{1}{1+z}
\end{pmatrix}, ~~
\b x_+ = \begin{pmatrix}
    1 & \frac{\sqrt{4z+5}-1}{2z} & \frac{z\sqrt{4z+5}+z+2}{2(z^2+z)}
\end{pmatrix}, ~~
\b x_- = \begin{pmatrix}
    1 & -\frac{\sqrt{4z+5}+1}{2z} & \frac{-z\sqrt{4z+5}+z+2}{2(z^2+z)}
\end{pmatrix}$$

Unfortunately, the square roots in the eigenvalues and eigenvectors make it very difficult to derive an explicit formula as simple as in \Cref{thm:NumberVertsMPPHypSimplTwo}, but we can prove two very interesting properties on the number of coherent paths of a given length.

\begin{theorem}\label{thm:LongestCoherentPath}
For a fixed size $n$ with $n\geq4$, the longest coherent path of size $n$ is of length $\ell_{\max} = \left\lfloor\frac{3}{2}(n-1)\right\rfloor$.
The number of coherent paths of size $n$ and length $\ell_{\max}$ is $1$ if $n$ is odd, and $\left\lfloor\frac{3}{2}(n-1)\right\rfloor$ if $n$ is even.
\end{theorem}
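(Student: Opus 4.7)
The plan is to run a joint induction on $n$, using the length-increment tables from the proofs of \Cref{thm:InductionProcess} and \Cref{prop:RecursiveFormulaLength}, and tracking not only the number of max-length paths but also an auxiliary count of type-$(i)$ paths one below the maximum. The latter is needed to propagate information across the parity switches.

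First, I would bound $\ell_{\max}$. The extension table shows that passing from a coherent path of size $n$ and length $\ell$ to any of its size-$(n+1)$ extensions gives a new length in $\{\ell, \ell+1, \ell+2\}$, with the increment $+2$ occurring only in case $(i)(b)$, which sends type $(i)$ to type $(iii)$. Since the type-$(iii)$ rows contain only $+0$ and $+1$, no two consecutive $+2$ increments are possible. Consequently, over $k$ consecutive steps the total length increase is at most $\tfrac{3k}{2}$ when $k$ is even and $\tfrac{3k+1}{2}$ when $k$ is odd. Substituting $k = n-4$ with base value $\ell_{\max}(4) = 4$ (read from $T_4, Q_4, C_4$) gives $\ell_{\max}(n) \leq \lfloor \tfrac{3}{2}(n-1)\rfloor$; the matching construction is the chain $(i)(b), (iii)(a), (i)(b), (iii)(a), \ldots$ starting from the unique type-$(i)$ length-$4$ path at size $4$.

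For the count, let $\alpha_n, \beta_n, \gamma_n$ denote the numbers of coherent length-$\ell_{\max}(n)$ paths of types $(i), (ii), (iii)$ at size $n$, and, when $n$ is odd, let $\alpha'_n$ denote the number of type-$(i)$ paths of length $\ell_{\max}(n)-1$. I would prove by induction the invariant: for odd $n \geq 5$, $(\alpha_n, \beta_n, \gamma_n) = (0, 0, 1)$ and $\alpha'_n = \tfrac{3n-7}{2}$; for even $n \geq 4$, $(\alpha_n, \beta_n, \gamma_n) = (1, 1, \tfrac{3n-8}{2})$, whose sum $\tfrac{3n-4}{2}$ equals $\lfloor \tfrac{3}{2}(n-1)\rfloor$. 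The base case $n = 4$ is immediate from the polynomials $T_4, Q_4, C_4$.

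The induction step is pure bookkeeping on the extension table. From even $n$ to odd $n+1$: the new maximum $\ell_{\max}(n)+2$ is reachable only via a $+2$ step, i.e., case $(i)(b)$ applied to the unique type-$(i)$ max-length path at size $n$, giving $(\alpha_{n+1}, \beta_{n+1}, \gamma_{n+1}) = (0, 0, 1)$; meanwhile $\alpha'_{n+1}$ collects all $+1$ transitions into type $(i)$, namely cases $(i)(a), (ii)(a), (iii)(a)$ applied to max-length paths, giving $\alpha_n + \beta_n + \gamma_n = \tfrac{3(n+1)-7}{2}$. From odd $n$ to even $n+1$: the new maximum is $\ell_{\max}(n)+1$, reached by $+1$ transitions out of the unique type-$(iii)$ max-length path (cases $(iii)(a), (iii)(d), (iii)(c)$ contribute one path of each type) together with the $+2$ transition $(i)(b)$ applied to the $\alpha'_n$ type-$(i)$ paths of length $\ell_{\max}(n)-1$ (contributing $\alpha'_n$ additional type-$(iii)$ paths), yielding $\alpha_{n+1} = \beta_{n+1} = 1$ and $\gamma_{n+1} = 1 + \alpha'_n = \tfrac{3(n+1)-8}{2}$. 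The main (modest) obstacle is to check that no $+0$ transition, nor any $+1$ transition in the even-to-odd case, contributes to the new maximum; both exclusions are immediate since they would require a size-$n$ path of length strictly greater than $\ell_{\max}(n)$.
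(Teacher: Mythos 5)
Your proof is correct and runs on essentially the same induction as the paper's: your triple $(\alpha_n,\beta_n,\gamma_n)$ together with $\alpha'_n$ is exactly the leading-term data that the paper tracks in the polynomials $T_n,Q_n,C_n$ (for odd $n$, the paper records $T_n=(\nu_n-2)z^{\nu_n-1}+o(z^{\nu_n-1})$, $Q_n=O(z^{\nu_n-1})$, $C_n=z^{\nu_n}+o(z^{\nu_n})$, which translates verbatim to your $(\alpha_n,\beta_n,\gamma_n)=(0,0,1)$ and $\alpha'_n=\nu_n-2$; similarly in the even case), and your transition bookkeeping via the extension tables amounts to multiplying by $\mathcal M$ and reading off the top coefficients. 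The only structural difference is that you deduce the upper bound $\ell_{\max}(n)\leq\lfloor\frac32(n-1)\rfloor$ separately, from the observation that $+2$ increments ($(i)(b)$) always land in type $(iii)$ so cannot occur consecutively, whereas the paper obtains the degree bound for free from the polynomial induction itself; this separation is a perfectly valid reorganization but not a different method.
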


\begin{proof}
We will prove by induction the slightly stronger following statement on the degrees and leading coefficients of $T_n$, $Q_n$ and $C_n$.
Denote $\nu_n = \left\lfloor\frac{3}{2}(n-1)\right\rfloor$:
$$\left\{\begin{array}{llll}
    \text{if } n \text{ odd,} & T_n = (\nu_n-2)z^{\nu_n-1} + o(z^{\nu_n-1}), & Q_n = O(z^{\nu_n-1}), & C_n = z^{\nu_n} + o(z^{\nu_n}) \\
    \text{if } n \text{ even,} & T_n = z^{\nu_n} + o(z^{\nu_n}), & Q_n =   z^{\nu_n}+o(z^{\nu_n}), & C_n = (\nu_n-2)z^{\nu_n} + o(z^{\nu_n})
\end{array}\right.$$

This statement holds for $n = 4$ as $\nu_4 = 4$, $T_4 = z^4 + o(z^4)$, $Q_4 = z^4$ and $C_4 = 2z^4+o(z^4)$.

Now, it is just a matter of multiplying by $\c M$.
Suppose $n$ is odd and the statement holds, then:
$$\begin{array}{rll}
     T_{n+1} &= zT_n + (1+z)Q_n+(1+z)C_n  & \\
     &= O(z^{\nu_n}) + O(z^{\nu_n}) + z^{\nu_n+1} + o(z^{\nu_n+1}) & \\
     &= z^{\nu_{n+1}} + o(z^{\nu_{n+1}}) &\text{as } \nu_{n+1} = \nu_n+1 \\ \\
     Q_{n+1} &= (1+z)Q_n+zC_n  & \\
     &= O(z^{\nu_n}) + z^{\nu_n+1} + o(z^{\nu_n+1}) & \\
     &= z^{\nu_{n+1}} + o(z^{\nu_{n+1}}) &\text{as } \nu_{n+1} = \nu_n+1 \\ \\
     C_{n+1} &= (z^2+z)T_n + (1+z)C_n  & \\
     &= (\nu_n-2)z^{\nu_n+1} +o(z^{\nu_n+1}) + z^{\nu_n+1} + o(z^{\nu_n+1}) & \\
     &= (\nu_{n+1}-2)z^{\nu_{n+1}} + o(z^{\nu_{n+1}}) &\text{as } \nu_{n+1} = \nu_n+1
\end{array}$$

Suppose $n$ is even, and the statement holds, then:
$$\begin{array}{rll}
     T_{n+1} &= zT_n + (1+z)Q_n+(1+z)C_n  & \\
     &= z^{\nu_n+1} + z^{\nu_n+1} + (\nu_n-2)z^{\nu_n+1} + o(z^{\nu_n+1}) & \\
     &= (\nu_{n+1}-2)z^{\nu_{n+1}-1} + o(z^{\nu_{n+1}-1}) &\text{as } \nu_{n+1} = \nu_n+2 \\ \\
     Q_{n+1} &= (1+z)Q_n+zC_n  & \\
     &= O(z^{\nu_n+1}) + O(z^{\nu_n+1}) & \\
     &= O(z^{\nu_{n+1}-1}) &\text{as } \nu_{n+1} = \nu_n+2 \\ \\
     C_{n+1} &= (z^2+z)T_n + (1+z)C_n  & \\
     &= z^{\nu_n+2} + o(z^{\nu_n+2}) + O(z^{\nu_n+1}) & \\
     &= z^{\nu_{n+1}} + o(z^{\nu_{n+1}}) &\text{as } \nu_{n+1} = \nu_n+2
\end{array}$$

By induction $T_n+Q_n+C_n$ has degree $\nu_n$, and leading coefficient $1$ if $n$ odd, $\nu_n$ if $n$ even.
\end{proof}

\begin{theorem}\label{thm:PolynomialCoefficient}
For a fixed length $\ell$, the number of coherent paths of size $n \geq \left\lceil \frac{2}{3}\ell +1\right\rceil$ is a polynomial in $n$ of degree $\ell-3$.
\end{theorem}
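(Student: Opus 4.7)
The plan is to turn \Cref{prop:RecursiveFormulaLength} into a scalar second-order linear recurrence on $\sigma_n := T_n + Q_n + C_n$, solve it in closed form, and then extract the coefficient of $z^\ell$. A direct computation shows that the characteristic polynomial of $\mathcal{M}$ factors as $\det(xI - \mathcal{M}) = x\bigl(x^2 - (2+3z)x + (1+3z+z^2-z^3)\bigr)$, so Cayley--Hamilton applied to $(T_n, Q_n, C_n)^T$ and then summed over coordinates yields, for all $n \geq 4$,
$$\sigma_{n+2} = (2+3z)\,\sigma_{n+1} - (1+3z+z^2-z^3)\,\sigma_n.$$
The two non-trivial roots are $\lambda_\pm = \tfrac{1}{2}\bigl((2+3z) \pm z\sqrt{5+4z}\bigr) \in \mathbb{Q}(\sqrt{5})[[z]]$, with $\lambda_\pm(0) = 1$ and first-order coefficients $\mu_\pm = \tfrac{3 \pm \sqrt{5}}{2}$ satisfying $\mu_+ + \mu_- = 3$ and $\mu_+ \mu_- = 1$.

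The standard theory of linear recurrences then provides $A, B \in \mathbb{Q}(\sqrt{5})[[z]]$ with $\sigma_n = A\,\lambda_+^{n-4} + B\,\lambda_-^{n-4}$ for every $n \geq 4$. Because $\sigma_4$ and $\sigma_5$ both vanish up to order $z^3$ with matching $z^3$-coefficient equal to $4$, a short calculation from $A = (\sigma_5 - \lambda_-\sigma_4)/(z\sqrt{5+4z})$ (and similarly for $B$) shows that $A, B$ themselves start at order $z^3$, with $A_3 := [z^3]A = 2 + \tfrac{6\sqrt{5}}{5}$ and $B_3 := [z^3]B = 2 - \tfrac{6\sqrt{5}}{5}$.

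I then invoke the elementary lemma that for any $f \in R[[z]]$ with $f(0) = 1$, the coefficient $[z^j]f^m$ is a polynomial in $m$ of degree at most $j$ with leading term $([z]f)^j/j! \cdot m^j$ (easy induction on $j$, or via $f^m = \exp(m\log f)$). Applied to $\lambda_\pm^{n-4}$, this gives
$$[z^\ell]\sigma_n = \sum_{k=3}^{\ell}\bigl(A_k\,[z^{\ell-k}]\lambda_+^{n-4} + B_k\,[z^{\ell-k}]\lambda_-^{n-4}\bigr),$$
each summand being a polynomial in $n$ of degree at most $\ell - k$. Thus $[z^\ell]\sigma_n$ is a polynomial in $n$ of degree at most $\ell - 3$, with top degree possible only from the $k = 3$ terms.

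The main obstacle is then to verify that this degree is \emph{exactly} $\ell - 3$, i.e.\ that the leading coefficient $\tfrac{1}{(\ell-3)!}\bigl(A_3\mu_+^{\ell-3} + B_3\mu_-^{\ell-3}\bigr)$ does not vanish. Setting $u_m := A_3\mu_+^m + B_3\mu_-^m$, Vieta's relations on $\mu_\pm$ give the recurrence $u_{m+2} = 3u_{m+1} - u_m$, with positive initial values $u_0 = A_3 + B_3 = 4$ and (by direct computation) $u_1 = A_3\mu_+ + B_3\mu_- = 12$; a trivial induction using $3u_{m+1} > u_m$ then gives $u_m > 0$ for all $m \geq 0$. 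Hence $[z^\ell]\sigma_n$ is a polynomial in $n$ of degree exactly $\ell - 3$, and \Cref{thm:LongestCoherentPath} identifies $\lceil\tfrac{2\ell}{3}+1\rceil$ as the smallest $n$ at which this polynomial is nonzero, completing the proof.
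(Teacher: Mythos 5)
Your proof is correct and takes a genuinely cleaner route than the paper. The paper diagonalizes $\c M$, writes $V_{n+4}$ explicitly in terms of $\lambda_\pm^n$ and $(\lambda_+^n - \lambda_-^n)/\sqrt{4z+5}$, expands these via the binomial theorem into sums of $\binom{n}{\kappa}(1+\tfrac32 z)^a(\tfrac54 + z)^b z^c$, and then argues about the largest $\kappa$ that can appear in the coefficient of $z^\ell$. You instead observe that the characteristic polynomial factors as $x\bigl(x^2-(2+3z)x+(1+3z+z^2-z^3)\bigr)$, collapse the problem to a scalar second-order recurrence for $\sigma_n = T_n+Q_n+C_n$, write $\sigma_n = A\lambda_+^{n-4}+B\lambda_-^{n-4}$ with $A,B$ of $z$-order exactly $3$, and then invoke the elementary (and correct) lemma that $[z^j]f^m$ is a polynomial in $m$ of degree at most $j$ with leading term $([z]f)^j m^j/j!$. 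This gives the degree bound $\ell-3$ immediately, and --- this is the genuine improvement --- you actually \emph{prove} the leading coefficient is nonzero by showing $u_m := A_3\mu_+^m + B_3\mu_-^m$ satisfies $u_{m+2}=3u_{m+1}-u_m$ with $u_0=4$, $u_1=12$, hence stays positive. The paper's argument that the degree is \emph{exactly} $\ell-3$ is considerably vaguer on this non-cancellation point, so your write-up is both more elementary and more rigorous on the key step.

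One detail you skated past: Cayley--Hamilton on $\c M^3$ only gives the scalar recurrence $\sigma_{n+2}=(2+3z)\sigma_{n+1}-(1+3z+z^2-z^3)\sigma_n$ for $n\geq 5$ automatically (since you need $\c M^{n-4}$ with $n-4\geq 1$ to apply $\c M^3=(2+3z)\c M^2-(\ldots)\c M$). For the recurrence to start at $n=4$, and hence for your formulas $A=(\sigma_5-\lambda_-\sigma_4)/(z\sqrt{5+4z})$ and $\sigma_4=A+B$, you need the additional fact that the initial vector has no component in the $\lambda_0=0$ right eigenspace. This does hold --- one checks $\b x_0\cdot(T_4,Q_4,C_4)^T = -T_4+Q_4+\tfrac{1}{1+z}C_4 = 0$ directly, or equivalently verifies $\sigma_6=(2+3z)\sigma_5-(1+3z+z^2-z^3)\sigma_4$ numerically --- but it should be stated. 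Even without it, the argument survives: writing $\sigma_n=A'\lambda_+^{n-5}+B'\lambda_-^{n-5}$ for $n\geq 5$ gives $A'=A\lambda_+$, $B'=B\lambda_-$, and since $\lambda_\pm(0)=1$ one has $[z^3]A'=A_3$ and $[z^3]B'=B_3$, so the leading-coefficient computation is unchanged. So this is a cosmetic gap, not a substantive one.
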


\begin{proof}
Let $v_{n,\ell}$ be the total number of coherent paths of size $n$ and length $\ell$, then \linebreak$V_n = \sum_{\ell} v_{n,\ell} z^{\ell} = T_n+Q_n+C_n$.
We can compute $V_n$ thanks to the powers of $\c M$:
$$V_{n+4} = \begin{pmatrix}
    1&1&1
\end{pmatrix}\c M^n\begin{pmatrix}
    T_4 \\ Q_4 \\ C_4
\end{pmatrix}$$

With the eigenvalues and eigenvectors given above, one can compute:
$$V_{n+4} = \frac{\lambda_+^n - \lambda_-^n}{\sqrt{4z+5}}z^5 + \left(2(\lambda_+^n+\lambda_-^n)+6\frac{\lambda_+^n - \lambda_-^n}{\sqrt{4z+5}}\right)(z^4 + z^3)$$

Note that as $\lambda_+$ and $\lambda_-$ depend on $z$.
Indeed:
$$\frac{\lambda_+^n - \lambda_-^n}{\sqrt{4z+5}} = \sum_k \binom{n}{2k+1}\left(1+\frac{3}{2}z\right)^{n-(2k+1)}\left(\frac{5}{4}+z\right)^k z^{2k+1}$$
and
$$\lambda_+^n + \lambda_-^n = 2\sum_k \binom{n}{2k}\left(1+\frac{3}{2}z\right)^{n-2k}\left(\frac{5}{4}+z\right)^k z^{2k}$$

Not only they are polynomials in $z$ (which was expected as $V_n$ is a polynomial by definition), but we can investigate their coefficients.
It allows us to re-write:
$$V_{n+4} = \sum_{(a,b,c)\in\N^3} \alpha_{n,a,b,c} \left(1+\frac{3}{2}z\right)^{a}\left(\frac{5}{4} + z\right)^{b} z^c$$
where $\alpha_{n,a,b,c}$ is a sum of binomial coefficient $\binom{n}{f(a,b,c)}$ with $f$ a function of $a$, $b$ and $c$.
This coefficient is thus a polynomial in $n$.

By \Cref{thm:LongestCoherentPath}, we know that the polynomial $V_n$ has degree $\left\lfloor \frac{3}{2}(n-1) \right\rfloor$, thus for a fixed~$\ell$, the coefficient of $V_n$ on $z^{\ell}$ is non-zero when $n \geq \left\lceil \frac{2}{3} \ell +1 \right\rceil$.
This coefficient can be seen as (a multiple of) the evaluation at $z = 0$ of the polynomial $\frac{\partial^{\ell}}{\partial z^{\ell}} V_n$.
But this derivative is again a sum of (products of) powers of $\left(1+\frac{3}{2}z\right)$, of $\left(\frac{5}{4} + z\right)$ and of $z$, with no new dependencies in $n$.
Evaluating at $z = 0$ gives that $v_{n,\ell}$ is a sum (which coefficients depend on $\ell$) of $\binom{n}{f(a,b,c)}$: a polynomial in~$n$.
To obtain its degree, we look for the greatest $\kappa$ such that $\binom{n}{\kappa}$ appears in the coefficient of $z^{\ell}$.
Developing of both $\frac{\lambda_+^n - \lambda_-^n}{\sqrt{4z+5}}$ and $\left(\lambda_+^n + \lambda_-^n\right)$, note that $\kappa$ is the power on the factor $z$.
For a fixed $\ell$, the greatest power on $z$ appearing in $\frac{\lambda_+^n - \lambda_-^n}{\sqrt{4z+5}}z^5$ is $\ell - 5$, the greatest in $\left(2(\lambda_+^n+\lambda_-^n)+6\frac{\lambda_+^n - \lambda_-^n}{\sqrt{4z+5}}\right)(z^4 + z^3)$ is $\ell -3$.
Thus, the degree of the polynomial $v_{n,\ell}$, as a polynomial in $n$, is $\ell - 3$.
\end{proof}

\begin{example}
With \Cref{prop:RecursiveFormulaLength}, we compute the number of coherent paths of size $n$, length $\ell$:

\begin{center}
\begin{tabular}{c|c|c|c|c|c|c|c|c|c|c|c|c|c}
    $n\backslash\ell$ &3&4&5&6&7&8&9&10&11&12&13&14&15 \\ \hline
    4 & 4&4&&&&&&&&&&& \\
    5 & 4&16&12&1&&&&&&&&& \\
    6 & 4&28&56&38&7&&&&&&&& \\
    7 & 4&40&132&195&129&32&1&&&&&& \\
    8 & 4&52&240&556&694&448&129&10&&&&& \\
    9 & 4&64&380&1205&2250&2496&1571&501&61&1&&& \\
    10 & 4&76&552&2226&5565&8896&9019&5564&1914&304&13&& \\
    11 & 4&88&756&3703&11627&21416&34622&32725&19881&7236&1375&99&1 \\
\end{tabular}
\end{center}

\vspace{0.5cm}

In this table, one can read out \Cref{thm:LongestCoherentPath} (for $n\leq 11$) by looking at the right-most value in each line.
Furthermore, \Cref{thm:PolynomialCoefficient} ensures that each column $\ell$ is a polynomial in $n$ of degree $\ell-3$.
Observing the rows given, the following holds for $n\geq1$:
\begin{compactenum}
\item[$\bullet$] for $\ell = 3$: $v_{n+3,3} = 4$ is also the number of diagonal-avoiding paths of length $3$.
\item[$\bullet$] for $\ell = 4$: $v_{n+3,4} = 12n-8$ is also the number of diagonal-avoiding paths of length $4$. 
\item[$\bullet$] for $\ell = 5$: $v_{n+4,5} = 4n(4n-1)$ is \textbf{not} the number of diagonal-avoiding paths of length $5$.
\item[$\bullet$] for $\ell = 6$: $v_{n+5,6} = 14n^3-24n^2+11n$. 
\item[$\bullet$] for $\ell = 7$: $v_{n+6,7} = \frac{1}{6}(55n^4 - 2n^3 - 34n^2 + 23n)$.
\end{compactenum}

And one can easily continue this list with a computer, thanks to the matrix recursion of \Cref{prop:RecursiveFormulaLength}.
\end{example}

\section{Open questions}

\paragraph{Computational remarks}
All the objects present in this paper have been implemented with Sage \cite{Sage}.
Namely, we are able to compute monotone path polytopes and label their vertices by the corresponding monotone paths and their normal cones (\ie the cone of $\b \omega$ that captures the monotone path).
Monotone path polytopes are computed as a Minkowski sum of sections, one per vertex, but computing Minkowski sums in high dimensions and with numerous vertices is costly.

\begin{question}
Create and implement more efficient algorithms to construct the monotone path polytope of a linear program $(\polytopeP, \b c)$.
Adapt this method to take into account symmetries of $\polytopeP$.
\end{question}

For $\MPPHypSimplTwo$, numerical statements were checked by $(i)$ constructing the monotone path polytope and counting its vertices (up to dimension $8$); $(ii)$ constructing all monotone paths and solving the linear system to know whether it can be captured or not (up to dimension $9$); $(iii)$ generating all paths that respect the criterion of \Cref{thm:EnhancedStepsCriterion} and verifying if they are coherent (up to dimension $12$); $(iv)$ implementing the matrix recursion (up to dimension $300$).
Fortunately, all these methods lead to the same result!
We implemented similar methods for counting the paths by their length.

Besides, the diagonalization of matrices was done with the help of Sage \cite{Sage} (and later checked by hand and with Wolfram Alpha \cite{WolframAlpha}), which benefits from excellent and easy-to-use tools to deal with matrices over any rings (especially the ring of symbolic expressions). 

\vspace{0,15cm}

\paragraph{Unimodality of the number of coherent monotone paths per length}
We have detailed the behavior of $v_{n,\ell}$ for a fixed length $\ell$.
But on the other side, for a fixed size $n$, one can look at the sequence $\bigl(v_{n,\ell}~;~\ell\in[3,\left\lfloor\frac{3}{2}(n-1)\right\rfloor]\bigr)$.
Jes\'us De Loera enjoined us to look at the properties of these sequences, especially their unimodality and log-concavity.
Quick computations motivate the following:

\begin{conjecture}
For $n\geq 4$, the sequence of numbers $\bigl(v_{n,\ell} ; \ell\in[3,\left\lfloor\frac{3}{2}(n-1)\right\rfloor]\bigr)$, counting the number of coherent monotone paths on $\HypSimplTwo$ according to their lengths, is log-concave (hence unimodal).
\end{conjecture}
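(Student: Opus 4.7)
The plan is to establish the stronger property that for every $n\geq 4$ the generating polynomial $V_n(z) \eqdef T_n(z) + Q_n(z) + C_n(z) = \sum_{\ell} v_{n,\ell}\, z^{\ell}$ is real-rooted, with all roots non-positive. Since $V_n$ has non-negative coefficients, Newton's inequalities then force log-concavity of the sequence $(v_{n,\ell})_\ell$, which in turn gives unimodality and settles the conjecture.

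The first step is to extract a scalar two-term recurrence from \Cref{prop:RecursiveFormulaLength}. The characteristic polynomial of $\c M(z)$ is $\lambda\bigl(\lambda^2 - (2+3z)\lambda + (1+3z+z^2-z^3)\bigr)$: the nonzero eigenvalues $\lambda_\pm$ satisfy $\lambda_+ + \lambda_- = 2+3z$ and $\lambda_+\lambda_- = 1+3z+z^2-z^3$. By Cayley--Hamilton, the triple $(T_n,Q_n,C_n)$, and hence $V_n$, satisfies
$$V_{n+2}(z) = (2+3z)\,V_{n+1}(z) - (1+3z+z^2-z^3)\,V_n(z) \qquad \text{for all } n\geq 4,$$
which can also be verified directly on the first values. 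The base cases $V_4 = 4z^3(1+z)$ and $V_5 = z^3(4+16z+12z^2+z^3)$ are real-rooted: the cubic factor of $V_5$ has three real roots by a quick sign analysis.

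The technical heart of the proof is to show that this recurrence preserves real-rootedness with interlacing. The main obstacle is that the coefficient $1+3z+z^2-z^3 = -(z+1)(z^2-2z-1)$ changes sign on $(-\infty,0)$: it vanishes at $z=-1$ and $z=1-\sqrt{2}$, is positive on $(-\infty,-1)$ and on $(1-\sqrt{2},0)$, but negative on $(-1, 1-\sqrt{2})$. Therefore, the classical Favard-type theorems for orthogonal-polynomial sequences do not apply directly. My plan to circumvent this is to \emph{strengthen the inductive hypothesis}: instead of tracking only $V_n$, track the triple $(T_n, Q_n, C_n)$ and show that it belongs to a cone of compatibly-interlacing real-rooted polynomials preserved by $\c M(z)$. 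Each nonzero entry of $\c M$ is one of $z$, $1+z$, or $z(1+z)$, all with real non-positive roots, placing us in the regime where Borcea--Br\"and\'en stability theory typically applies; the crux is to identify the correct joint-interlacing condition and check it on each of the three rows of $\c M$.

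If this analytic route turns out to be too delicate, a fallback is a direct combinatorial proof of log-concavity: using the 12-case induction of \Cref{thm:InductionProcess}, construct an explicit injection
$$\bigl\{\text{coherent paths of length } \ell-1\bigr\} \times \bigl\{\text{coherent paths of length } \ell+1\bigr\} \hookrightarrow \bigl\{\text{coherent paths of length } \ell\bigr\}^{2},$$
exchanging a local segment near the last lattice step to transfer one unit of length from the longer input to the shorter one. The hardest part, in either approach, is the careful bookkeeping across all 12 cases to ensure that the resulting objects remain coherent lattice paths.
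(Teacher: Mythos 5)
This statement is an open conjecture in the paper: the authors explicitly remark that they ``did not yet prove this conjecture, but there may be a way to tackle it with the matrix recursion of \Cref{prop:RecursiveFormulaLength}.'' There is thus no paper proof to compare against, and your proposal must be judged on its own merits.

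Your opening reduction is correct and is a genuine step forward. The characteristic polynomial of $\c M(z)$ is indeed $\lambda\bigl(\lambda^2-(2+3z)\lambda+(1+3z+z^2-z^3)\bigr)$ (trace $2+3z$, vanishing determinant, sum of principal $2\times2$ minors $1+3z+z^2-z^3$), so by Cayley--Hamilton the polynomial $V_n = T_n+Q_n+C_n$ satisfies
$$V_{n+2} = (2+3z)\,V_{n+1} - (1+3z+z^2-z^3)\,V_n$$
for $n\geq 5$; a direct check on $V_4 = 4z^3(1+z)$, $V_5 = z^3(z^3+12z^2+16z+4)$ and $V_6 = z^3(7z^4+38z^3+56z^2+28z+4)$ (read off from the paper's table) shows it also holds at $n=4$. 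Your base cases are real-rooted, the factorization $1+3z+z^2-z^3 = -(z+1)(z^2-2z-1)$ is correct, and you rightly identify the sign changes of this coefficient on $(-\infty,0)$, at $z=-1$ and $z=1-\sqrt 2$, as the obstruction to a naive interlacing induction.

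However, from that point on the argument is a plan, not a proof. Neither proposed route is carried out: the Borcea--Br\"and\'en strategy requires you to exhibit a joint-interlacing cone on triples $(T_n,Q_n,C_n)$ that $\c M(z)$ preserves row by row and to verify that preservation, while the combinatorial fallback requires an explicit length-transferring injection through all twelve cases of \Cref{thm:InductionProcess}; you yourself flag both as unresolved. The sign change of $1+3z+z^2-z^3$ is precisely the reason the standard three-term-recurrence criteria for propagating real-rootedness do not apply, so what is missing is the entire inductive step, not a detail. Note also that you are aiming for a strictly stronger property than the conjecture: real-rootedness of $V_n$ implies log-concavity of its coefficients, but the converse fails, and the paper's numerical evidence (log-concavity up to $n=150$) does not certify real-rootedness. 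Before investing further in the analytic route, it would be prudent to confirm numerically that $V_n$ stays real-rooted over at least that range; if it does not, the combinatorial injection becomes the only viable path.
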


\begin{figure}[b]
    \centering
    \includegraphics[scale=0.45]{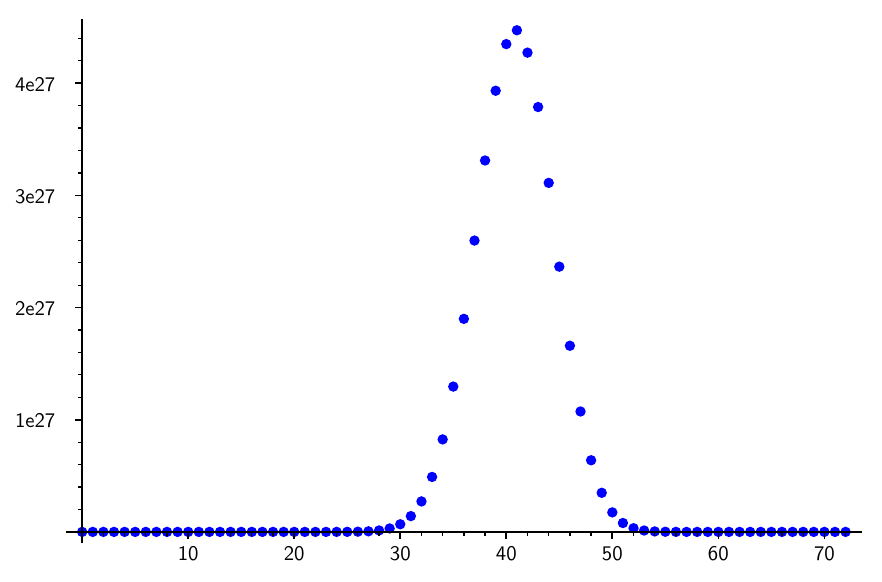}
    \caption{Number of coherent paths on $\HypSimplTwo[50]$ for length $\ell\in[3,73]$}
    \label{fig:NumberPathsByLength}
\end{figure}

Thanks to \Cref{prop:RecursiveFormulaLength}, we can compute these sequences for large $n$, \eg $n = 50$ in \Cref{fig:NumberPathsByLength}.
All out computations tend to confirm this conjecture: it holds for $n\leq 150$.
Besides, note that the archetypal sequence $\bigl(\binom{n}{\ell} ~;~ \ell\in[0,n]\bigr)$ is log-concave and shares a property similar to \Cref{thm:PolynomialCoefficient}: for a fixed $\ell$, the value $\binom{n}{\ell}$ is a polynomial in $n$ of degree $\ell$.
We did not yet prove this conjecture, but there may be a way to tackle it with the matrix recursion of \Cref{prop:RecursiveFormulaLength}.

Moreover, one can count the number of monotone path according to their length (without restricting to coherent monotone paths), which amounts to counting the total number of diagonal-avoiding lattice paths.
Given the type of combinatorics at stake, it seems reasonable (yet unproven) to think that the sequence of total number of monotone paths will also be log-concave.

Note that the unimodality of the number of (coherent) monotone paths per length does not hold for all polytopes.
An article exploring examples and counter-examples of this property is in preparation with Martina Juhnke.

\vspace{0,15cm}

\paragraph{Monotone path polytopes of $\HypSimpl$}
\Cref{thm:EnhancedStepsCriterion} gives a necessary criterion for a monotone path on $\HypSimpl$ to be coherent.
We have shown that this criterion is sufficient for the case $k = 2$, but computer experiments shows that is it no longer sufficient when $k \geq 3$, that is to say there exist monotone paths satisfying the enhanced steps criterion that are not coherent.
The encoding of monotone paths on $\HypSimpl$ through lattice paths on the grid $[n]^k$ seems a good framework for studying this problem further.

\begin{question}
Find a combinatorial criterion for a diagonal-avoiding path in the $[n]^k$ grid to represent a coherent monotone path on $\HypSimpl$ (for a generic direction $\b c$), and count these paths.
\end{question}

\vspace{0,15cm}

\paragraph{Full description of $\MPPHypSimplTwo$}
We only give here a description of the vertices of $\MPPHypSimplTwo$, it would be of prime interest to investigate the (higher-dimensional) faces of it.
In \Cref{rmk:NormalFanOfMn2}, we explained how the details of our proof allow to construct the normal cone of each vertex of $\MPPHypSimplTwo$, and hence to retrieve its whole normal fan.
However, a combinatorial interpretation is still missing.

\begin{question}
Give a combinatorial interpretation for the faces of the monotone path polytope (which are not vertices) of $\HypSimplTwo$ (for a generic direction $\b c$).
\end{question}

A first idea to do so is to introduce a notion of adjacencies between coherent lattice paths in order to describe the edges of $\MPPHypSimplTwo$, but the drawings this notion gives birth to are not easy to interpret.
A second idea would be to use the fact that faces of the hypersimplex are again hypersimplices (of lower dimensions): one could try to “see” $\MPPHypSimpl[n-1]$ inside $\MPPHypSimpl$, and recover (properties of) the face lattice of $\MPPHypSimpl$ from there.
A glimpse of this is depicted in \Cref{fig:MPP42,fig:M(52)}: the 5 octagons appearing in the polytope of the second figure shall be thought of as 5 copies of the octagon on the right of the first figure (but it remains hard to explain where the 16 squares come from, and how the faces fit together).

\vspace{0,25cm}

\paragraph{From generic to non-generic directions}
In \Cref{ssec:NonGenericDirection}, we built a bridge between coherent monotone paths on $\HypSimplTwo$ for generic directions and for directions of the form $\b e_S  := \sum_{i\in S} \b e_i$ for $S\subseteq [n]$ (non-generic directions studied in \cite[Section 8]{BlackSanyal-FlagPolymatroids}).
This raises a more general question:

\begin{question}
Suppose one knows the coherent monotone paths for $(\polytopeP, \b c)$ for all generic $\b c$, what can be deduced for the coherent monotone paths for non-generic $\b c$?
Conversely, how can one determine the case of generic $\b c$ from the case of non-generic $\b c$?
\end{question}

The problem has two sides: first, having $\b c$ non-generic means a (coherent) monotone path cannot use certain edges of $\polytopeP$; second, having $\b c$ non-generic means the start and end vertices of (coherent) monotone paths are not uniquely defined but can be chosen arbitrarily in a face.
However, one can pick $\b c$ non-generic and a $\b c$-monotone path $\c L$, then consider $\b c'$ generic arbitrarily close to $\b c$ (this exists because the set on non-generic $\b c$ is of measure $0$, as it is the union of the co-dimension 1 cones of $\c N_{\polytopeP}$).
If there is a coherent monotone path $\c L'$ for $(\polytopeP, \b c')$, such that $\c L$
is a sub-path of $\c L'$, then $\c L$ is also a coherent monotone path for $(\polytopeP, \b c)$.
Exploring the details of this relation goes beyond the scope of the present paper, but the previous reasoning hints at a deeper link between the generic and the non-generic situation.

Another way to consider the matter is to construct, for each path $\c L = (\b v_1, \dots, \b v_r)$ the set of couples $(\b c, \b \omega)$ which satisfy the inequalities:
$$\inner{\b \omega, \b v_{i+1} - \b v_i}\inner{\b c, \b u - \b v_i} - \inner{\b c, \b v_{i+1} - \b v_i}\inner{\b \omega, \b u - \b v_i} > 0 ~~~~
\begin{array}{l}
\text{for } i\in [r-1] \text{ and } \\
\b u \text{ a } \b c\text{-improving neighbor of } \b v_i
\end{array}$$

This is almost a semi-algebraic set given by inequalities of order 2, if we forget to read ``$\b u$ a $\b c$-improving neighbor of $\b v_i$''.
This set is non-empty if and only if there exists $\b c$ such that $\c L$ is a coherent monotone path for $(\polytopeP, \b c)$.
Therefore, one may probe the combinatorics of the monotone path polytopes of $(\polytopeP, \b c)$ for all $\b c$ at the same time by studying the decomposition of $\R^d\times \R^d$ into semi-algebraic cells by the polynomials $\inner{\b \omega, \b v_{i+1} - \b v_i}\inner{\b c, \b u - \b v_i} - \inner{\b c, \b v_{i+1} - \b v_i}\inner{\b \omega, \b u - \b v_i} = 0$ (for instance, this can be address via a cylindrical algebraic decomposition).
The interplay between this algebraic decomposition and case of non-generic $\b c$ has yet to be explored.

\bibliographystyle{alpha}
\bibliography{Bibliography}
\label{sec:biblio}

\end{document}